\documentclass[a4paper,12pt]{amsart}
\usepackage[utf8]{inputenc}
\usepackage[T1]{fontenc}
\usepackage[UKenglish]{babel}
\usepackage[margin=18mm]{geometry}
\usepackage{color}



\usepackage{graphicx}

\usepackage{amsmath,amssymb,amsfonts,amsthm}
\usepackage{mathrsfs,eucal,dsfont}
\usepackage{verbatim,enumitem}

\usepackage{hyperref,url}



\newcommand{\R}{\mathds R}

\newcommand{\I}{\mathds 1}

\def\aa{\alpha}
\def\dd{\delta}
\def\d{{\rm d}}
\def\<{\langle}
\def\>{\rangle}

 \def\tt{\tilde}
 \def\ff{\frac}
 \def\ss{\sqrt}
\def\bb{\beta}

\def\R{\mathbb R}  \def\ff{\frac} \def\ss{\sqrt} 
 \def\kk{\kappa} 
\def\dd{\delta} \def\DD{\Delta} \def\vv{\varepsilon} 
\def\<{\langle} \def\>{\rangle}  \def\gg{\gamma}
  \def\nn{\nabla}  
\def\d{\text{\rm{d}}} \def\bb{\beta} \def\aa{\alpha} 
  \def\si{\sigma} 
 \def\beq{\begin{equation}}  
 
\def\e{\text{\rm{e}}}    
 \def\tt{\tilde} 
 \def\P{\mathbb P}

  \def\ll{\lambda}
 
\def\E{\mathbb E}

\def\to{\rightarrow}
\def\8{\infty}\def\3{\triangle}
\def\1{\lesssim}

\renewcommand{\bar}{\overline}
\renewcommand{\hat}{\widehat}
\renewcommand{\tilde}{\widetilde}

\newtheorem{theorem}{Theorem}[section]
\newtheorem{lemma}[theorem]{Lemma}

\newtheorem{corollary}[theorem]{Corollary}

\theoremstyle{definition}

\newtheorem{remark}[theorem]{Remark}

\numberwithin{equation}{section}
\begin{document}
\allowdisplaybreaks

\title[Non-asymptotic convergence bounds of modified EM  schemes] {Non-asymptotic convergence bounds of modified EM  schemes for non-dissipative SDEs}

\author{
Jianhai Bao\qquad
Jiaqing Hao\qquad Panpan Ren}
\date{}
\thanks{\emph{J.\ Bao:} Center for Applied Mathematics, Tianjin University, 300072  Tianjin,  China. \url{jianhaibao@tju.edu.cn}}

\thanks{\emph{J.\ Hao:}
Center for Applied Mathematics, Tianjin University, 300072  Tianjin,  China. \url{hjq_0227@tju.edu.cn}}

\thanks{\emph{P.\ Ren:}
 Mathematics Department, City University of Hong Kong, Tat Chee Av., Hong Kong,  China. \url{panparen@cityu.edu.hk}}

\maketitle

\begin{abstract}
In this paper, we address the issue on non-asymptotic convergence bounds of Euler-type schemes associated with
non-dissipative SDEs. On the one hand,  for non-degenerate SDEs with super-linear drifts, we
propose a novel modified Euler scheme  and establish the corresponding non-asymptotic convergence bound under the multiplicative type  quasi-Wasserstein distance
  by the aid of the asymptotic reflection by coupling. As a direct application of the theory derived, we
explore  the non-asymptotic convergence bound of the modified tamed/truncated Euler scheme
  and, as a byproduct, furnish  the associated non-asymptotic convergence rate under the $L^1$-Wasserstein distance although the
 dissipativity at infinity  is not in force. On the other hand, we tackle the non-asymptotic convergence analysis
of the Euler scheme corresponding to a kind of degenerate SDEs, where the underdamped Langevin SDE is a typical candidate. To handle such setting, we also appeal to a carefully tailored coupling approach, where the ingredient   in the coupling construction  lies in   that a proper   metric and a suitable substitute     in the cut-off function and the reflection matrix need to be chosen appropriately.  In addition, as a consequent  application, the non-asymptotic convergence bound and the $L^1$-Wasserstein convergence rate are revealed  for the  kinetic Langevin sampler.

\medskip

\noindent\textbf{Keywords}:
 non-dissipative SDE; non-asymptotic convergence bound; asymptotic reflection by coupling; modified   Euler scheme;  kinetic Langevin sampler.
\smallskip

\noindent \textbf{MSC 2020}: 60G51, 60J25, 60J76.
\end{abstract}

\section{Background and main results}
In the past few decades, the long-time asymptotics of Euler type schemes associated with globally dissipative SDEs has been established very well. For instance,     \cite{FG} explored  the uniform-in-time  error bound (in the $L^p$-moment sense) of the adapted Euler-Maruyama (EM for short) algorithm corresponding to a kind of  SDEs with non-globally Lipschitz drifts; \cite{BDMS} investigated the non-asymptotic $L^2$-Wasserstein bound   of the tamed unadjusted Langevin algorithm; \cite{CDHS} derived the strong $LLN$
and $CLT$ for the backward EM scheme concerned with  dissipative SDEs, functional SDEs, as well as stochastic evolution equations.  To address the issues under consideration in \cite{BDMS,CDHS,FG}, the approach based on the   synchronous  coupling
plays a crucial role by recalling that  the globally dissipative hypothesis is enforced therein. Whereas, the method on account of the synchronous coupling is not workable once the globally dissipative condition is weaken into the partially dissipative version.

With the remarkable progress made on  ergodicity  of SDEs with partially dissipative drifts (see, for instance,    \cite{Eberle,LMW,LMP,LW19}, to name just a few), the associated weak contraction  of the  EM scheme  has been investigated in depth in various scenarios when the drift of the underlying SDE   satisfies the global Lipschitz condition \cite{HJK}.
In particular, by the aid of the coupling approach,
the $L^1$-Wasserstein contraction of the EM algorithm was
studied in  \cite{EM} and  \cite{HMW} when the driven noise of the
non-degenerate SDE   under investigation  is the Brownian motion and the rotationally invariant $\alpha$-stable process, respectively. Via the   synchronous coupling and by constructing a quasi-metric function, which is comparable  to the cost function inducing the $L^2$-Wasserstein distance, the weak $L^2$-Wasserstein contraction of the EM scheme was treated in  \cite{LMP}. Meanwhile, the study  on the $L^1$-Wasserstein contraction of the EM method related to degenerate SDEs has also received much attention; see e.g. \cite{SW} concerning  three kinetic Langevin samplers. Additionally, we allude to \cite{GWXY} with regard to   a
quantitative error estimate   between SGD with momentum and underdamped Langevin diffusion.
Overall, regardless of  \cite{EM}, \cite{GWXY}, \cite{HMW}, \cite{LMP} or \cite{SW}, the drifts of underlying SDEs
are confined to be of linear growth \cite{HJK} so numerous interesting and important  SDEs with super-linear growth are excluded.

More recently, there also is progress on
the ergodicity of Euler type schemes associated with SDEs with sup-linear and partially dissipative drifts. Especially, a variant, which incorporates  the projected EM algorithm and the tamed EM scheme as two typical candidates,
of the classical EM scheme was proposed in \cite{BMW}. For this newly designed  scheme,
the   weak contraction under the $L^1$-Wasserstein distance, the weighted total variation distance, as well as the mixed probability distance between the $L^1$-Wasserstein distance and the total variation distance was studied in some detail by
constructing   a refined basic coupling or designing  an asymptotic coupling by reflection \cite{Wang15}. As for the projected EM scheme and the truncated EM algorithm,
the $L^2$-Wasserstein contraction was tackled in \cite{BH3} by making good use of the   synchronous coupling
and the associated contraction under a multiplicative type quasi-Wasserstein distance built  in a subtle way.
In addition, \cite{NMZ} probed into the non-asymptotic convergence bounds for modified tamed unadjusted Langevin algorithm in a non-convex setting, which improved greatly the corresponding convergence rate derived in \cite{DAS}. It is also worthy to mention the reference \cite{IS}, where, concerning a novel taming Langevin-based scheme,
   the non-asymptotic convergence bounds in the relative entropy was obtained 
by means of the Log-Sobolev inequality.

Regarding  fully non-dissipative   SDEs (i.e., they satisfy a Lyapunov condition in lieu of the  partially dissipative condition, which in the literature is also termed as the dissipativity at infinity),
the recent advances upon  the ergodicity of (functional) SDEs is also achieved.
Below, we  give  an overview on the related references. By initiating the so-called weak Harris' theorem and taking advantage of the generalized coupling approach,   the exponential contraction under a multiplicative type quasi-Wasserstein distance was explored  in \cite{HMS}, where the diffusion term of the underlying functional SDE is path-dependent. Later on, the pioneer work \cite{HMS} was extended
in \cite{BWY2} to SDEs with infinite memory by employing a concise Wasserstein coupling. Furthermore,  by invoking the mixed coupling (e.g., the mixture of the   synchronous coupling and the reflection coupling or the refined basic coupling) instead of the generalized coupling,
the exponential contraction under the quasi-Wasserstein distance has been  investigated in a more in depth fashion for non-degenerate/degenerate SDEs. More specifically,  we would like to refer to \cite{EGZ19}  for Langevin dynamics, \cite{EGA,Wang23} concerning McKean-Vlasov SDEs with the Brownian motion noise, and
 \cite{LMW} with regarding to  McKean-Vlasov SDEs, where the  driven noise is  a pure jump
L\'evy process.

As we mentioned previously, there has been a lot of effort in investigating the non-asymptotic convergence analysis for various  sampling algorithms
when the gradient of  the underlying potential is partially dissipative; see, for instance,  \cite{BH2,BH3,BDMS,GWXY,DAS,LSZ,NMZ} and references within.
 Nevertheless, the same issue is rather incomplete  when the associated partially dissipative condition is
 replaced by the fully non-dissipative counterpart (i.e., the Lyapunov type condition). So, in the context of the rapid advancement   on ergodicity of fully non-dissipative SDEs,
 it is quite natural to ask the following question: as regards the EM scheme and its variants associated with fully non-dissipative SDEs, how can we quantify  the non-asymptotic error bound under a suitable probability (quasi-)distance, and provide solid theoretical support for samplings of high target distributions  when  the partially dissipative condition (which has been presumed frequently in the literature) is no longer satisfied.
 The aforementioned issues impel us to pursue the  topic
   in the present work. In  addition, our ongoing paper is also  inspired partly  by  the work \cite{Suzuki}, in which a bound  on the EM-discretization error for Langevin dynamics with the {\it quadratic} potential was provided  with the help of an  approximate reflection coupling.

\subsection{Non-asymptotic convergence bound: non-degenerate case}\label{ssec1}
In this subsection, we focus on the following SDE:
\begin{align}\label{eq1}
\d X_t= b (X_t)  \,\d t+ \d W_t,
\end{align}
where $b :\R^d\to\R^d$ is measurable,
and $(W_t)_{t\ge0}$ is a $d$-dimensional Brownian motion supported on a filtered probability space
$(\Omega,\mathscr F,(\mathscr F_t)_{t\ge0},\P)$.

In most of scenarios,   SDEs under considerations   are unsolvable explicitly so various numerical approximation schemes are
proposed to discretize \eqref{eq1}  and implement the corresponding simulations  by the aid of  computers.  Inspired by e.g. \cite{BDMS,DEEGM,HJK}, we put forward
the following continuous-time approximation scheme associated with the SDE \eqref{eq1}:
for $\dd>0$ and $t\ge 0,$
\begin{align}\label{eq2}
	\d X^{(\dd)}_t=  b^{ (\delta) } (X^{(\dd)}_{t_\dd}) \,\d t+ \d W_t,
\end{align}
where   $t_\dd:=\lfloor t/\dd\rfloor \dd$ with $\lfloor\cdot\rfloor$  being the floor function, and $ b^{(\delta)}$ is an approximation version of $b$, namely, for each fixed $x\in\R^d,$ $|b^{(\delta)}(x)-b(x)|\to 0$ as  $\delta\to 0$. The scheme \eqref{eq2} is comprehensive so numerous competitive numerical algorithms
in the literature can be formulated as one of the special cases. In particular, the algorithm \eqref{eq2} incorporates the following typical candidates:
\begin{enumerate}
\item[(i)]  The EM scheme: $b^{(\delta)}=b$;

\item[(ii)] The truncated EM scheme (\cite{Mao}):  $b^{(\delta)}(x)=b(\pi^{\delta}(x))$, where $\pi^{\delta}:\R^d\to\R^d$ is a contractive truncation mapping, which is determined by  the local Lipschitz constant of $b$;

\item[(iii)] The tamed EM scheme (\cite{HJK,NMZ}): $b^{(\delta )}(x)=\frac{b(x)}{1+\delta^\theta|x|^{\ell_0}}$ or $b^{(\delta )}(x)=\frac{b(x)}{(1+\delta^{2\theta} |x|^{2\ell_0})^{\frac{1}{2}}}$, $x\in\R^d,$ where $\theta\in(0,1/2)$ and   $b$ is of polynomial growth with the leading power $\ell_0+1$, that is,
    $|b(x)|\le C_0(1+|x|^{\ell_0+1})$ for some constants $C_0,\ell_0>0.$
\end{enumerate}
In turn,   the scheme \eqref{eq2} can be regarded as an abstract framework extracted from the schemes mentioned previously. In some occasions, we shall write $(X_t^{(\delta),\mu})_{t\ge0}$ instead of $  (X_t^{(\delta) })_{t\ge0}$ once the initial distribution $\mu=\mathscr L_{X^{(\delta)}_0}$ is to be emphasized.

In order to widen the scope of the theory to be derived, we decompose  $b$ into two parts, written as  $b_0$   and $b_1$,
namely, $b=b_0+b_1$, where $b_1:\R^d\to\R^d$  and $b_0 :\R^d\to\R^d$ is allowed to possess  low regularity.
Concerning the drifts  $ b_0$ and $b_1$, we assume that
\begin{enumerate}
\item[$({\bf H}_1)$] $\R^d\ni x\mapsto b_1(x)$ is   locally Lipschitz  and
 there exist constants $K_1,K_2>0$ and $\alpha\in(0,1)$ such that for all $x,y\in \R^d,$
\begin{align*}
	\<x-y,b_1(x)-b_1(x)\>\le  K_1|x-y|^2\quad \mbox{ and } \quad  |b_0(x)-b_0(y)|\le K_2 |x-y|^\alpha;
\end{align*}

\item[$({\bf H}_2)$] there is  a   compact  function $V :\R^d\to [1,\8)$  and  constants $\ll_V,C_V>0$ such that for all $x\in  \R^d,$
\begin{align}\label{Lya1}
(\mathscr LV)(x)\le -\ll_V V(x)+C_V,
 \end{align}
 where $\mathscr L$ denotes the infinitesimal generator of $(X_t)_{t\ge0}$ governed by  \eqref{eq1};
 \item[$({\bf H}_3)$] for  $V$ given  in $({\bf H}_2)$, there exist  constants $L_V>0,\eta\in[0,1)$  such that for all $x,y\in  \R^d$ and $\delta\in(0,1],$
 \begin{align}\label{WE}
 \<\nn V(x), b^{(\delta)}(x)\>\le L_V(1+V(x)),
 \end{align}
 \begin{equation}\label{EW-}
 \|\nn ^2 V(x)\|_{\rm HS}\le L_V(1+V(x)^\eta),
 \end{equation}
and
\begin{align}\label{JJ}
(V(x)^\eta  |y|^2)\vee\sup_{u\in[0,1]}V(y+u(x-y))\le L_V(1+V(x)+V(y));
\end{align}
 \item[$({\bf H}_4)$]there exist constants $c^*,c_*>0$ and $\theta\in(0,1/2]$ such that for all   $x\in  \R^d $ and  $\delta\in(0,1],$
 \begin{align*}
 	 |b^{(\delta)}(x)|\le c_*+c^*\delta^{-\theta}|x|.
 \end{align*}
\end{enumerate}

 Under $({\bf H}_1)$, \eqref{eq1} admits a unique strong solution $(X_t)_{t\ge0}$ by appealing to a cut-off procedure
 along with an application of  the Zvonkin transformation; see, for example, \cite[Proposition 2.1]{BH} for related details. Additionally, from time to time, 
 we   write $(X_t^\mu)_{t\ge0}$ in lieu of $(X_t)_{t\ge0}$ in order to highlight the initial distribution $\mathscr L_{X_0}=\mu.$

 Below, we make some comments on assumptions $({\bf H}_1)$-$({\bf H}_4)$.
 \begin{remark}\label{Rem*}
 \begin{enumerate}
 \item[(i)]
 $({\bf H}_1)$ reveals that $b_1$ satisfies the so-called one-sided Lipschitz condition, which plays a crucial role when the coupling approach
 is adopted to handle the long-time behavior of $(X^{(\dd)}_t)_{t\ge0}$ as shown in the proof of Theorem \ref{thm1}, and that $b_0$ is allowed to enjoy  the H\"older regularity.  $({\bf H}_2)$ demonstrates that $(X_t)_{t\ge0}$ fulfills the Lyapunov condition (so, in the present work the SDE \eqref{eq1} is called a non-dissipative SDE in contrast to the existing literature \cite{BMW,BH3,EM,GWXY,HMW,LMP,IS,NMZ,SW}), which is indispensable
 to establish the uniform-in-time estimates between the exact solution and the associated numerical approximation scheme under the multiplicative type Wasserstein distance.

 \item[(ii)] As far as  the modified EM scheme \eqref{eq2}, which is much more general,  is concerned,  the first issue we need to address is the corresponding well-posedness.  For this, (${\bf H}_3$) and (${\bf H}_4$) are in force; see  Lemma \ref{lem0}  for more details.
Provided that the following assumption: for some constants $c_1,c_2>0$ and all $\delta\in(0,1],$
\begin{align}\label{RTT}
\<x,b^{(\delta)}(x)\>\le c_1+c_2|x|^2,\quad x\in\R^d
\end{align}
holds true, \eqref{WE} is valid for $V(x)=V_p(x)=  (1+|x|^2)^{\frac{1}{2}p}  $ with $p>0.$ The hypothesis \eqref{RTT} can be verified for the scheme \eqref{MTEM} and the scheme \eqref{TEM}, respectively; see the proof of Theorem \ref{pro1} for more details.  Apparently, \eqref{EW-} is verifiable for $V(x)=V_p(x)$ defined above. \eqref{JJ} shows that $V$ is at least of quadratic growth, which seems to be a bit stringent. Nevertheless, for some explicit  EM type schemes (e.g., the scheme \eqref{MTEM} and  the scheme \eqref{TEM}), the condition \eqref{JJ} can be abandoned as shown in Lemma \ref{lem0-1}. Furthermore, the precondition \eqref{EW-} is necessary to dominate the underlying quadratic process; see in particular the estimate of $\Gamma_2^\vv$ in Lemma \ref{lem3}.    It should be pointed out  that \eqref{EW-} holds true automatically in case $V(x)=V_p(x)$ with $p\in(2,\8 )$, or $\nn V$ (for instance, $V(x)=V_p(x)$ with $p\in(0,2]$) is globally Lipschitz continuous; see the estimate \eqref{JJ-} for related details.

  \item[(iii)]
 $({\bf H}_4)$ demonstrates that the modified drift $b^{(\delta)}$ is of linear growth, where the slope is dependent on the step size (i.e., $\delta^{-\theta}$,  which obviously tends to infinity as $\delta\to0$). In addition,  $({\bf H}_4)$ has been examined in the proof of Theorem \ref{pro1} for the scheme \eqref{MTEM} and the scheme \eqref{TEM} in Subsection \ref{sub1}, separately.
 \end{enumerate}
 \end{remark}

To measure the distance between two probability measures, we recall  from e.g.  \cite[(4.3)]{HMS}
the notion on the quasi-Wasserstein distance.  Let $\mathcal H$ be a Polish space and $\rho:\mathcal H\times\mathcal H\to[0,\infty)$
be a distance-like function (i.e., it is symmetric, lower semi-continuous, and such that $\rho(x,y)=0\Leftrightarrow x=y$; see, for instance,  \cite[Definition 4.3]{HMS}). The quasi-Wasserstein distance $\mathcal W_\rho$, induced by distance-like function $\rho$, is defined as below:
\begin{align*}
\mathcal W_\rho(\mu,\nu)=\inf_{\pi\in\mathscr C(\mu,\nu)}\int_{\mathcal H\times \mathcal H}\rho(x,y)\pi(\d x,\d y),\quad \mu,\nu\in\mathscr P(\mathcal H),
\end{align*}
where $\mathscr C(\mu,\nu)$ stands for the collection of couplings of $\mu,\nu$ and $\mathscr P(\mathcal H)$
embodies the family of probability measures on $\mathcal H.$ For a $\mu$-integrable function $f:\R^d\to\R$, we 
adopt  the shorthand $\mu(f)=\int_{\R^d}f(x)\mu(\d x)$.

The main result in this subsection is stated as follows.
\begin{theorem}\label{thm1}
	Assume $({\bf H}_1)$-$({\bf H}_4)$. Then, there exist constants
	$\ll_0,C_0,C_0^\star>0$ such that for all $\mu,\nu \in  \{\mu' \in \mathscr{P}(\mathbb{R}^d): \mu'(V)<\infty\}$, $\dd\in (0,1)$, and $t\ge0,$
	\begin{equation}\label{Q1}
		\begin{aligned}
	\mathcal W_{\rho_V}\Big(\mathscr L_{X^{(\dd),\mu}_t},\mathscr L_{X^{\nu}_t}\Big)\le &C_0\e^{-\ll_0 t}	\mathcal W_{\rho_{V}}(\mu,\nu)+C_0^\star\int_{0}^t\e^{-\ll_0(t-s)}\limsup_{\vv\to0}\E R^{\dd,\vv}_s\,\d s,
	\end{aligned}
	\end{equation}
in which
\begin{equation}\label{R1}
\rho_{V}(x,y):=(1\wedge|x-y|)(1+ V(x)+V(y)),\quad x,y\in\R^d,
\end{equation}
and
\begin{align}\label{F3} 
R^{\dd,\vv}_t:=\big|b(X^{(\dd,\vv)}_t)-b^{(\dd)} (X^{(\dd,\vv)}_{t_\dd})\big| \big(1+  V(X^{ \dd,\vv}_t)+ V(X^{(\delta,\vv)}_t)\big)
\end{align}
with $(X_t^{ \delta,\vv},X^{(\delta,\vv)}_t)_{t\ge0}$ being the coupling process determined in \eqref{eq3}. 
 \end{theorem}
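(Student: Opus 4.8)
The plan is to establish the non-asymptotic bound \eqref{Q1} via a coupling argument that combines an asymptotic reflection coupling for the contraction in the initial distributions with a synchronous-type error control for the discretization. First I would introduce an auxiliary continuous-time process $(X_t^{\delta,\vv})_{t\ge0}$ solving an SDE driven by the \emph{same} Brownian motion as the numerical scheme \eqref{eq2} but with the genuine drift $b$ in place of $b^{(\delta)}(\cdot_{t_\delta})$ — this is the process referenced in \eqref{eq3}. The role of $\vv>0$ is to smooth out the distance-like function $\rho_V$ near the diagonal so that It\^o's formula can be applied; the $\limsup_{\vv\to0}$ on the right-hand side of \eqref{Q1} reflects the removal of this regularisation at the end.

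The main steps, in order: (1) Build the coupling $(X_t^{\delta,\vv}, X_t^{(\delta),\mu})$ by a reflection/synchronous mixture — reflection coupling on the region where the two marginals are moderately separated (to exploit the one-sided Lipschitz bound on $b_1$ from $({\bf H}_1)$ and the smoothing dimension reduction afforded by the Brownian noise), synchronous coupling far away (to exploit the Lyapunov drift $({\bf H}_2)$), with a smooth cut-off interpolating between the two regimes; the H\"older part $b_0$ is absorbed using $|b_0(x)-b_0(y)|\le K_2|x-y|^\alpha$ with $\alpha\in(0,1)$, which is subcritical and hence harmless for the reflection estimate. (2) Apply It\^o's formula to a suitably chosen concave functional of $|X_t^{\delta,\vv}-X_t^{(\delta),\mu}|$ multiplied by the Lyapunov weight $1+V(X_t^{\delta,\vv})+V(X_t^{(\delta),\mu})$, so that the process $\rho_V$ at the marginal laws is controlled; here $({\bf H}_3)$, in particular \eqref{WE}–\eqref{JJ}, is what lets one differentiate through $V$ and dominate the cross terms and the quadratic-variation contributions, while $({\bf H}_4)$ guarantees the linear growth needed for the moment bounds of the scheme. (3) The drift mismatch between the two processes — $b(X_t^{\delta,\vv})$ versus $b^{(\delta)}(X_{t_\delta}^{\delta,\vv})$ — is exactly the quantity appearing in $R_t^{\delta,\vv}$ of \eqref{F3}; this enters the It\^o expansion as an inhomogeneous source term, weighted by $1+V+V$, and is carried along. (4) Combining the negative drift $-\lambda_0$ coming from the reflection contraction (after optimising over the cut-off radius) with the source term and applying Gronwall's inequality yields, for the $\vv$-smoothed functional, a bound of the form $C_0\e^{-\lambda_0 t}\mathcal W_{\rho_V}(\mu,\nu)+C_0^\star\int_0^t\e^{-\lambda_0(t-s)}\E R_s^{\delta,\vv}\,\d s$. (5) Finally pass to the limit $\vv\to0$: the smoothed functional converges to $\rho_V$ evaluated along the coupling, so the left side becomes $\ge\mathcal W_{\rho_V}(\mathscr L_{X_t^{(\delta),\mu}},\mathscr L_{X_t^\nu})$ by the very definition of the quasi-Wasserstein distance, giving \eqref{Q1} with $\limsup_{\vv\to0}$ retained on the right.

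The hard part will be Step (2): choosing the concave modification of the metric and the cut-off scale so that the reflection part of the coupling produces a genuinely negative exponential rate $\lambda_0$ \emph{uniformly in} $\delta\in(0,1)$, despite the $\delta^{-\theta}$ blow-up of the Lipschitz slope in $({\bf H}_4)$ and the fact that the Lyapunov function $V$ is only assumed to satisfy \eqref{Lya1} at the level of the \emph{exact} generator $\mathscr L$, not the discretised one. One must show the discretisation error incurred over a single step $[t_\delta,t_\delta+\delta)$ — precisely what \eqref{WE}, \eqref{EW-}, \eqref{JJ} are designed to bound — does not destroy the Lyapunov-type control of $1+V(X_t^{\delta,\vv})+V(X_t^{(\delta),\mu})$, so that the weighted reflection estimate closes. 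I expect this to require a careful one-step expansion of $V$ along the scheme (using \eqref{EW-} for the Hessian term and \eqref{JJ} to compare $V$ at the interpolation points) together with the linear-growth bound $({\bf H}_4)$ to handle $\E|X_t^{\delta,\vv}-X_{t_\delta}^{\delta,\vv}|^2$, all arranged so that the $\delta$-dependence cancels. The remaining steps — the coupling construction, the Gronwall argument, and the $\vv\to0$ limit — are by now fairly standard and should follow the template of \cite{BMW,EGZ19,HMS}.
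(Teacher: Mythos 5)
Your high-level strategy --- regularised reflection coupling, concave modification of the distance multiplied by a Lyapunov weight, It\^o plus Gronwall, then $\vv\to0$ --- is the paper's strategy, and your steps (2)--(5) are essentially right. But two details are mis-described, and the second one signals a missing idea that would trip up a literal implementation.

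First, the coupling \eqref{eq3} has only \emph{one} cut-off: synchronous for $|Z^{\dd,\vv}_t|\le\vv$ (pure regularisation near the diagonal) and reflection for $|Z^{\dd,\vv}_t|\ge 2\vv$. There is no second switch to synchronous coupling at large separation. The ``far away'' regime you attribute to the coupling is handled entirely in the metric, by taking $g(r)=f(r\wedge l_0)$ flat for $r>l_0$: then $g'_-=g''=0$, the reflection terms drop out of $\d g(|Z^{\dd,\vv}_t|)$, and the negativity in $\Gamma_1^\vv$ comes from $-\lambda_V(V(x)+V(y))+2C_V<0$ on $\mathcal{D}_V^c$ (Lemma \ref{lem2}, case (ii)). Second, and more substantively, the ``hard part'' you flag --- achieving a contraction rate $\ll_0$ uniform in $\dd$ despite the $\dd^{-\theta}$ blow-up in $({\bf H}_4)$ --- is not where the difficulty lies, because the paper never contracts against $b^{(\dd)}$ at all. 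In Lemma \ref{Lemma1} the scheme's drift $b^{(\dd)}(X^{(\dd,\vv)}_{t_\dd})$ is rewritten as $b(X^{(\dd,\vv)}_t)$ plus a discrepancy, and \emph{everything} involving $b^{(\dd)}$ is deposited into the remainder $R_t^{\dd,\vv,\gamma}$ (which is precisely what appears, after simplification, on the right of \eqref{Q1}). The contraction estimates in Lemmas \ref{lem2}--\ref{lem3} therefore involve only $K_1,K_2,\alpha$ from $({\bf H}_1)$, $\lambda_V,C_V$ from $({\bf H}_2)$, and the regularity of $V$ from $({\bf H}_3)$; no property of $b^{(\dd)}$ enters, so no uniform-in-$\dd$ one-sided Lipschitz or discretised Lyapunov bound is needed there. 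Conditions $({\bf H}_3)$--$({\bf H}_4)$ are used in the contraction argument only through Lemma \ref{lem0}, which supplies the finite-horizon bound $\sup_{s\le t}\E V(X^{(\dd)}_s)<\infty$ needed to justify Gronwall and the $\vv\to0$ passage --- not to close a discretised Lyapunov drift inequality. Recognising this decomposition is the key structural idea of the proof, and without it one would indeed get stuck in the way you anticipated.
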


In order to proceed with our analysis, we present some comments on Theorem \ref{thm1}.	
 	
 \begin{remark}\label{remark-}
In fact, constants $\ll_0,C_0,C_0^\star>0$ involved  in Theorem \ref{thm1} are given explicitly as follows:
 \begin{align*}
		C_0=1+\gg^{-1}l_0(1+c_2/c_1), \quad C_0^\star=\ff{2(2c_2\vee c^\star )}{c_1\gamma }, \quad\ll_0=\frac{c_3}{4(c_1+c_2)}\wedge\ff{\gg \ll_V}{  1+2\gg } ,
		\end{align*}
		where $$c^\star:=\gamma(1+c_1l_0)\big(\big(L_V(1+L_V^\eta)(1+2L_V)(1+V({\bf0})^\eta)\big)\vee |\nn V({\bf0})|\big),$$
		and
		$\gamma,c_1,c_2,c_3,l_0$ can also be written  precisely; see    \eqref{Q10} below for concrete expressions. Furthermore, at the first sight, \eqref{Q1} is not elegant   since there is a coupling process $(X_t^{ \delta,\vv},X^{(\delta,\vv)}_t)_{t\ge0}$, which is deferred to \eqref{EQ3}. 
 Nevertheless,  once   $b^{(\delta)}$ is given explicitly, the upper bound estimate of  $\E   R^{\dd,\vv}_t$  
 (which indeed is independent of the parameter $\vv$) boils down to the uniform-in-time  moment estimate of
$(X_t^{(\delta),\mu})_{t\ge0}$ and $(X_t^{\nu})_{t\ge0}$ since they are distributed identically as 
$(X_t^{(\delta,\vv)})_{t\ge0}$ and 
$(X_t^{ \delta,\vv })_{t\ge0}$, respectively; see  \eqref{F1} and \eqref{F2} below for some specifics. In addition, 
by H\"older's inequality, $\E R^{\dd,\vv}_t$ can be dominated obviously as below: 
\begin{align*}
\E R^{\dd,\vv}_t&\le \Big(\E\big|b(X^{(\dd,\vv)}_t)-b^{(\dd)} (X^{(\dd,\vv)}_{t_\dd})\big|^2\Big)^{\frac{1}{2}} \Big(1+  \big(\E V(X^{ \dd,\vv}_t)^2\big)^{\frac{1}{2}}+ \big(\E V(X^{(\delta,\vv)}_t)^2\big)^{\frac{1}{2}}\Big)\\
&=\Big(\E\big|b(X^{(\dd),\mu}_t)-b^{(\dd)} (X^{(\dd),\mu}_{t_\dd})\big|^2\Big)^{\frac{1}{2}} \Big(1+  \big(\E V(X^{ \nu}_t)^2\big)^{\frac{1}{2}}+ \big(\E V(X^{(\delta),\mu}_t)^2\big)^{\frac{1}{2}}\Big),
\end{align*}
where the identity holds true since $(X_t^{ \delta,\vv},X^{(\delta,\vv)}_t)_{t\ge0}$ is  a coupling process
$(X_t^\nu)_{t\ge0}$ and $(X_t^{(\delta),\mu})_{t\ge0}$. With the previous estimate at hand, the remainder involved in 
\eqref{Q1} can be reformulated via $(X_t^{(\delta),\mu},X_t^\nu)_{t\ge0}$ rather than the coupling process $(X_t^{ \delta,\vv},X^{(\delta,\vv)}_t)_{t\ge0}$. If we do it this way, the initial distributions $\mu,\nu$ need to satisfy $\nu(V^2)+\mu(V^2)<\8$ instead of $\nu(V)+\mu(V)<\8$ postulated in Theorem \ref{thm1}. Based on this, we prefer to write the remainder term at the price of introducing the coupling process $(X_t^{ \delta,\vv},X^{(\delta,\vv)}_t)_{t\ge0}$ instead of enhancing the integrability  of $V$ with respect to the initial distribution. 
 \end{remark}

 \begin{remark}\label{REm-}
 For the establishment of Theorem \ref{thm1}, in  \eqref{eq1}  we set the diffusion coefficient $\sigma=I_d$ (i.e., the identity  $d\times d$-matrix) for the sake of mere simplicity. Indeed, Theorem \ref{thm1} can be extended to the SDE \eqref{eq1} with   $\d W_t$ being  replaced by $\sigma\d W_t$, namely, 
\begin{align}\label{eq1-}
\d X_t=b(X_t)\,\d t+\si\d W_t,
\end{align}
 where $\sigma\in\R^d\otimes\R^d$ satisfies  $\si\si^\top\ge \lambda I_d$ for some constant $\lambda>0.$
 In this case, the corresponding $(X_t)_{t\ge0}$ is  distributed identically as $(Y_t)_{t\ge0}$, which is governed  by the following SDE:
 \begin{align}\label{EK}
 \d Y_t=b(Y_t)\,\d t+ \lambda^{\frac{1}{2}} \d W_t+(\si\si^\top-\lambda I_d)^{\frac{1}{2}}\d B_t,
 \end{align}
 where $(W_t)_{t\ge0}$ and $(B_t)_{t\ge0}$ are $d$-dimensional Brownian motions, which are mutually independent. With the SDE \eqref{EK} at hand,
 inspired by the coupling construction stated  in \eqref{eq3},  we roughly adopt the reflection coupling and the synchronous coupling
 when  the intensity  is a diagonal matrix and  a non-diagonal constant matrix, respectively.  Subsequently, for the new setup mentioned above, the corresponding proof of the restatement of Theorem \ref{thm1} can be finished by repeating the procedure adopted  in the present work.
 \end{remark}

\begin{remark}
One might wonder that  why we work merely  on the SDE \eqref{eq1} with the additive noise rather than the multiplicative noise. The key point lies in that, for such setting, the underlying SDE and its approximation version enjoy different diffusion terms so   the coupling approach adopted in the present work is no longer workable.
As regards the aforementioned framework, we will appeal to a completely different approach (namely, a combination of
an extended weak Harris' theorem and a Wasserstein coupling) to handle the corresponding non-asymptotic convergence analysis of the modified EM scheme.  This has been  investigated in depth in our forthcoming  paper   \cite{BHS}.
\end{remark}

\subsection{Application to  modified    EM schemes}\label{sub1}
In this subsection, we apply Theorem \ref{thm1} to
two   specific algorithms, namely,   the modified truncated EM scheme and the modified tamed EM algorithm.

In the previous subsection, some conditions are enforced on the modified drift $b^{(\delta)}$ (see \eqref{WE} and $({\bf H}_4)$).
In this subsection, we turn to impose  explicit hypotheses on the original drift $b$ so that \eqref{WE} and $({\bf H}_4)$
can be examined in a clear way.

More precisely,
regarding the drift term $b$, we suppose that
\begin{enumerate}
	\item[$({\bf A}_1)$] there exist constants $L_0>0, \ell_0\ge 0$ such that for all $x,y\in \R^d$,
	\begin{align*}
		|b_1(x)-b_1(y)|\le L_{0}(1+|x|^{\ell_0}+|y|^{\ell_0})|x-y|;
	\end{align*}
	\item [(${\bf A}_2$)] there exist constants $\ll^*,C^*>0$ such that for all $x\in \R^d,$
\begin{align*}
		\<x,b_1(x)\>\le -\ll^*|x|^{2}+C^*.
	\end{align*}
\end{enumerate}

It is standard  that the SDE \eqref{eq1} is strongly well-posed under $({\bf A}_1)$ and $({\bf A}_2)$; see e.g. \cite[Theorem 3.1.1.]{PR}.
In \eqref{eq2}, once we take $b^{(\delta)}(x)=b(\pi^{(\dd,\theta)}(x)) $ with
$$
 	\pi^{(\dd,\theta)}(x):=\big(|x|\wedge(\dd^{-\theta}-1)^{\ff{1}{\ell_0}}\big) \ff{x}{|x|}\mathds 1 _{\{|x|\neq 0\}}, \quad \theta\in(0,1/2),$$
the discrete-time version of \eqref{eq2} can be written    in the form: for $\delta>0$ and   $k\ge0,$
\begin{align}\label{H1-}
X^{(\delta)}_{(k+1)\delta}=X^{(\delta)}_{ k \delta}+ b(\pi^{(\dd,\theta)}(X^{(\delta)}_{k\dd})) \delta+ \triangle W_{k\delta},
\end{align}
where $\triangle W_{k\delta}:=W_{(k+1)\delta}-W_{k\delta}$.  As is known to all,  the  moment  boundedness in the infinite horizon  plays a crucial role
 in treating the long-time behavior of the scheme \eqref{H1-}.
 To obtain the uniform-in-time moment estimate
 of $(X^{(\delta)}_{k \delta})_{k\ge0}$, (${\bf A}_2$) is   imposed in general. Accordingly, one has
\begin{align}\label{H2}
		\<\pi^{(\dd,\theta)}(x),b_1(\pi^{(\dd,\theta)}(x))\>\le -\ll^*|\pi^{(\dd,\theta)}(x)|^{2}+C^*,\quad x\in\R^d.
	\end{align}
Whereas,   as far as the one-step iteration  in   \eqref{H1-} is concerned, only the drift term is modified nevertheless
the initial value is untouched. As a consequence, \eqref{H2} is insufficient to derive the uniform moment estimate by further noticing
that \eqref{H2} is not  true provided that  $|\pi^{(\dd,\theta)}(x)|^{2}$ (on the right hand side of  \eqref{H2}) is replaced by $|x|^2$. Apparently, the drift term $b_1$ can be reformulated   as follows:  for some  $a>0$,
\begin{align}\label{-H4}
b_1(x)=-ax+\bar b_1(x)  \quad \mbox{ with } \quad \bar b_1(x):=b_1(x)-ax,\quad x\in\R^d.
\end{align}
If $\bar b_1(x)$ is modified  into $\bar  b_1(\pi^{(\dd,\theta)}(x)) $, then we obtain the following approximate scheme:
\begin{align}\label{H3}
 X^{(\delta)}_{(k+1)\delta}=X^{(\delta)}_{ k \delta}-aX^{(\delta)}_{k \delta}\delta+  \bar  b_1(\pi^{(\dd,\theta)}(X^{(\delta)}_{k\dd})) \delta+ \triangle W_{k\delta}.
 \end{align}
 In case  the upper bound of  $\<x,\bar  b_1(\pi^{(\dd,\theta)}(x))\>$ is   a lower-order term in contrast to $-a|x|^2$ when $a$ is chosen appropriately and
  $|x|$ is large enough,
 the moment estimate  of  \eqref{H3}  in the infinite horizon can be obtainable. Based on the previous point of view, we construct the following   modified truncated  EM   scheme (see e.g. \cite{Mao} for the prototype) associated with \eqref{eq1}:
for $\dd\in(0,1)$,
\begin{align}\label{MTEM}
\d X^{(\delta)}_t=\big(b_0(X^{(\delta)}_{t_\dd})-\ll^*X^{(\delta)}_{t_\dd}+\bar  b_1(\pi^{(\dd,\theta)}(X^{(\delta)}_{t_\dd}))	\big)\,\d t+\d W_t,
\end{align}
 in which $\lambda^*>0$ is given in $({\bf A}_2)$ and $\bar b_1$ is defined in \eqref{-H4} with $a=\lambda^*.$

For  the tamed type EM algorithm, we generally  choose for $\theta\in(0,1/2],$ $$b^{(\delta,\theta)}_1(x)=\frac{b(x)}{1+\delta^\theta|x|^{\ell_0}},\quad x\in\R^d.$$
Most importantly, the parameter $\theta$ determines the  convergence rate of the corresponding scheme. In detail, the bigger the parameter  $\theta$ is, the faster the convergence rate is. Via a direct calculation, $({\bf A}_1)$ shows that there exists a constant $C_0>0$ such that for all $x\in\R^d$ and $\delta\in(0,1],$
\begin{align*}
 \delta|b^{(\delta,\frac{1}{2})}_1(x)|^2\le L_0 |x|^2+C_0.
\end{align*}
 As shown in Lemma \ref{lem0-1} below, the following condition: for some constants $\lambda,C_0^*>0$,
 \begin{align}\label{-H6}
\<x,b^{(\delta,\frac{1}{2})}_1(x)\>+\delta|b^{(\delta,\frac{1}{2})}_1(x)|^2\le -\lambda|x|^2+C_0^*
\end{align}
is of extreme importance to tackle  the uniform moment estimate of the associated scheme \eqref{eq2} for  $b^{(\delta)}_1=b^{(\delta,\frac{1}{2})}_1$. To guarantee that \eqref{-H6} is valid, an additional condition $\lambda^*>L_0$ has to be imposed, which is quite unreasonable.
 Alternatively, inspired by \cite{NMZ}, we can  take  for $\theta\in(0,1/2),$
 $$b^{(\delta,\theta)}_1(x)=\frac{b_1(x)}{(1+\delta^{2\theta}|x|^{2\ell_0})^{\frac{1}{2}}},\quad x\in\R^d.$$
Seemingly, $b^{(\delta,\theta)}_1$ defined in two different ways mentioned previously possesses  the same order in the denominator. Nevertheless, the latter one can provide a faster convergence rate; see, for instance,  \cite{NMZ} for more details.
In addition, to examine \eqref{-H6} in a very simple way,
we can further modify $b_1$ on account of \eqref{-H4} to  get the following one: for $\theta\in[\frac{1}{4},\frac{1}{2}),$
\begin{align}\label{-H7}
b^{(\delta,\theta)}_1(x)=-\lambda^*x+\frac{\bar b_1(x)}{(1+\delta^{2\theta}|x|^{2\ell_0})^{\frac{1}{2}}},\quad x\in\R^d.
\end{align}
 On the basis of the aforementioned viewpoint, we design the following modified tamed EM  scheme:
\begin{align}\label{TEM}
\d X^{(\delta)}_t=\big(b_0(X^{(\delta)}_{t_\delta})+b^{(\delta,\theta)}_1(X^{(\delta)}_{t_\delta})\big)\,\d t+\d W_t,
\end{align}
where $b^{(\delta,\theta)}_1$ is defined in \eqref{-H7}. For one more choice of $b^{(\delta,\theta)}_1$, we would like to refer  to \cite{LSZ}.

 With regard to the modified EM variants in \eqref{MTEM} and \eqref{TEM}, the following theorem addresses the issue on  non-asymptotic convergence bounds,
  where, most importantly,  the associated convergence rate is  provided at the same time.
 \begin{theorem}\label{pro1}
	Assume that $({\bf H}_1)$, $({\bf A}_1)$ and $({\bf A}_2)$ are satisfied. Then, concerning the modified truncated EM scheme \eqref{MTEM} and the modified  tamed EM scheme \eqref{TEM}, for any   $p>0,$ there exist constants {{$\ll^*,C^*, p^*,m^*>0,\dd^*\in (0,1)$}}such that for all $\mu\in\mathscr	P_{p^*}(\R^d),\nu \in \mathscr P_{2p}(\R^d)$ and $\dd\in(0,\dd^*)$,
	\begin{align}\label{pro1*}
	\mathcal W_{\rho_{|\cdot|^p}}\Big(\mathscr L_{X^{(\dd),\mu}_t},\mathscr L_{X^{\nu}_t}\Big)\le C^*\e^{-\ll^* t}	\mathcal W_{\rho_{|\cdot|^p}}(\mu,\nu)+C^*d^{m^*}\dd^{\ff\aa2},
	\end{align}
	where,  as for the scheme \eqref{MTEM},
	 $p^*=2(\ell_0+1+\ell_0/{(2\theta)}\vee p)$ and $m^*=\ell_0+1+\ell_0/{(2\theta)}\vee p;$  regarding the scheme \eqref{TEM},
$p^*=(1+3\ell_0)\vee(2p)$ and $m^*=(1/2+3\ell_0/2)\vee p.$
\end{theorem}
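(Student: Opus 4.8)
The plan is to apply Theorem \ref{thm1} with $V=V_p$, where $V_p(x)=(1+|x|^2)^{p/2}$, exploiting that $\rho_{|\cdot|^p}\le\rho_{V_p}\le c_p\,\rho_{|\cdot|^p}$, so that $\mathcal W_{\rho_{V_p}}$ and $\mathcal W_{\rho_{|\cdot|^p}}$ are comparable; then the task reduces to turning the abstract remainder $\limsup_{\vv\to0}\E R^{\dd,\vv}_s$ in \eqref{Q1} into an explicit, $s$-uniform bound of order $d^{m^*}\dd^{\aa/2}$. The first step is to verify $({\bf H}_2)$--$({\bf H}_4)$ for $V=V_p$ ($({\bf H}_1)$ being assumed). $({\bf H}_2)$ follows from $({\bf A}_2)$ and the sublinear growth of $b_0$ forced by the Hölder bound in $({\bf H}_1)$: together they give $\<x,b(x)\>\le-\lambda^*|x|^2+C$ for $|x|$ large, which upon computing $\mathscr L V_p$ yields $(\mathscr L V_p)(x)\le-\lambda_V V_p(x)+C_V$. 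For $({\bf H}_3)$ I verify the simpler condition \eqref{RTT}, which by Remark \ref{Rem*}(ii) implies \eqref{WE} for $V_p$: for the scheme \eqref{MTEM}, writing $b^{(\dd)}(x)=b_0(x)-\lambda^*x+\bar b_1(\pi^{(\dd,\theta)}(x))$ and using that $\pi^{(\dd,\theta)}(x)$ is a radial contraction of $x$ with $|\pi^{(\dd,\theta)}(x)|\ge1$ for $\dd$ small, the estimate $\<y,\bar b_1(y)\>\le C^*$ (coming from $({\bf A}_2)$ and the definition \eqref{-H4} of $\bar b_1$ with $a=\lambda^*$) gives $\<x,\bar b_1(\pi^{(\dd,\theta)}(x))\>=\tfrac{|x|}{|\pi^{(\dd,\theta)}(x)|}\<\pi^{(\dd,\theta)}(x),\bar b_1(\pi^{(\dd,\theta)}(x))\>\le C^*|x|$ on the truncation region and $\le C^*$ off it; for the scheme \eqref{TEM}, $\<x,b^{(\dd,\theta)}_1(x)\>=-\lambda^*|x|^2+(1+\dd^{2\theta}|x|^{2\ell_0})^{-1/2}\<x,\bar b_1(x)\>\le-\lambda^*|x|^2+C^*$. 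Condition \eqref{EW-} holds for $V_p$ (automatically when $p>2$, and because $\nabla V_p$ is globally Lipschitz when $p\le2$), while the quadratic-growth condition \eqref{JJ}, which may fail for $p<2$, is dispensable for these two explicit schemes by invoking Lemma \ref{lem0-1} in place of the abstract moment argument (cf.\ Remark \ref{Rem*}(ii)). Finally $({\bf H}_4)$ is immediate from $|\pi^{(\dd,\theta)}(x)|^{\ell_0}\le\dd^{-\theta}$, $|\pi^{(\dd,\theta)}(x)|\le|x|$ and the polynomial growth of $\bar b_1$ for \eqref{MTEM}, and from $(1+\dd^{2\theta}|x|^{2\ell_0})^{-1/2}|\bar b_1(x)|\le C(1+\dd^{-\theta}|x|)$ for \eqref{TEM}.

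Second, I establish the uniform-in-time moment bounds. For the exact solution, $({\bf H}_2)$ gives $\sup_{t\ge0}\E V_q(X^\nu_t)\le\nu(V_q)+C$ whenever $\nu(V_q)<\infty$; for the schemes I invoke Lemma \ref{lem0-1}, i.e.\ $\sup_{t\ge0}\E V_q(X^{(\dd),\mu}_t)\le C\big(\mu(V_q)+d^{q/2}\big)$ for $\dd\in(0,\dd^*)$, whose proof rests on the one-step identity $\E[\,|X^{(\dd)}_{(k+1)\dd}|^2\mid\mathscr F_{k\dd}\,]=|X^{(\dd)}_{k\dd}|^2+2\dd\<X^{(\dd)}_{k\dd},b^{(\dd)}(X^{(\dd)}_{k\dd})\>+\dd^2|b^{(\dd)}(X^{(\dd)}_{k\dd})|^2+d\dd$ combined, for \eqref{MTEM}, with the reformulation \eqref{H3} so that the $-\lambda^*|x|^2$ term controls $\dd^2|b^{(\dd)}(x)|^2$ for $\dd$ small (using \eqref{H2}), and, for \eqref{TEM}, with an inequality of type \eqref{-H6}; the passage to $V_q$ for general $q$ and to the continuous-time interpolant is routine. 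The moment orders $q$ needed below are exactly what determine the exponents $p^*$ and $m^*=p^*/2$ in the statement, the factor $d^{q/2}$ above being the source of the power $d^{m^*}$.

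Third, it remains to estimate the one-step error. By Remark \ref{remark-} (used with the coupling process), Cauchy--Schwarz, and the moment bounds, $\limsup_{\vv\to0}\E R^{\dd,\vv}_s\lesssim\big(\E\big|b(X^{(\dd),\mu}_s)-b^{(\dd)}(X^{(\dd),\mu}_{s_\dd})\big|^2\big)^{1/2}$ uniformly in $s$ (this is where $\nu\in\mathscr P_{2p}$ and the scheme moments of order $p^*$ enter), and I split $b(X_s)-b^{(\dd)}(X_{s_\dd})=\big(b(X_s)-b(X_{s_\dd})\big)+\big(b(X_{s_\dd})-b^{(\dd)}(X_{s_\dd})\big)$. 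Since $|X_s-X_{s_\dd}|\le|W_s-W_{s_\dd}|+\dd|b^{(\dd)}(X_{s_\dd})|$ has $L^2$-norm $\lesssim(d\dd)^{1/2}+\dd^{1-\theta}\lesssim(d\dd)^{1/2}$ (using $\theta<1/2$ and the scheme moments), the Hölder part $b_0$ of the first bracket contributes $K_2\big(\E|X_s-X_{s_\dd}|^2\big)^{\aa/2}\lesssim(d\dd)^{\aa/2}$ --- the dominant term and the origin of the rate $\dd^{\aa/2}$ --- whereas the locally Lipschitz polynomial part $b_1$ contributes $O(\dd^{1/2})$ after Cauchy--Schwarz and the moment bounds. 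For the bias bracket: for \eqref{MTEM}, $b(x)-b^{(\dd)}(x)=\bar b_1(x)-\bar b_1(\pi^{(\dd,\theta)}(x))$ vanishes on $\{|x|\le(\dd^{-\theta}-1)^{1/\ell_0}\}$ and is bounded by $C(1+|x|^{\ell_0+1})$ elsewhere, so Cauchy--Schwarz together with a Markov tail bound on $\P(|X_{s_\dd}|>(\dd^{-\theta}-1)^{1/\ell_0})$ makes it $o(\dd^{\aa})$ once enough moments are available; for \eqref{TEM}, $|b(x)-b^{(\dd)}(x)|=|\bar b_1(x)|\big(1-(1+\dd^{2\theta}|x|^{2\ell_0})^{-1/2}\big)\lesssim\dd^{2\theta}\,|\bar b_1(x)|\,|x|^{2\ell_0}$, giving an $O(\dd^{2\theta})$ contribution in $L^2$, again of higher order than $\dd^{\aa/2}$ since $\theta\ge\tfrac14$ and $\aa<1$. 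Inserting $\limsup_{\vv\to0}\E R^{\dd,\vv}_s\le Cd^{m^*}\dd^{\aa/2}$ into \eqref{Q1} and using $\int_0^t\e^{-\lambda_0(t-s)}\,\d s\le\lambda_0^{-1}$ yields \eqref{pro1*}, after returning to $\mathcal W_{\rho_{|\cdot|^p}}$.

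The main obstacle is the coupling between the second and third steps: one must secure uniform-in-time moment estimates for the modified schemes with \emph{enough} moments, and with explicit control of the dimension dependence, to absorb simultaneously the $\dd^{-\theta}$ growth of $b^{(\dd)}$ in the one-step variance term and the polynomial growth of $\bar b_1$ against the Markov tail of $|X^{(\dd)}_{s_\dd}|$ on the truncation (resp.\ taming) region, so that the discretization bias is of strictly higher order than $\dd^{\aa/2}$. This moment bookkeeping is precisely what forces the sizeable exponents $p^*$ and the power $m^*=p^*/2$ of $d$ in the statement, and it must be carried out while tracking $d$ through both the Brownian increment $\E|W_s-W_{s_\dd}|^2=d(s-s_\dd)$ and the constants in the moment recursion.
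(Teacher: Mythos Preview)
Your plan is essentially the paper's: choose $V=V_p$, verify $({\bf H}_2)$--$({\bf H}_4)$ (the latter via \eqref{RTT}), invoke Lemma~\ref{lem0-1} for uniform moments of the scheme and Lemma~\ref{lem4} for the exact solution, then feed everything into Theorem~\ref{thm1} and control the remainder $R^{\dd,\vv}$. The paper also splits $b(X^{(\dd)}_s)-b^{(\dd)}(X^{(\dd)}_{s_\dd})$ into the $b_0$-part, the $b_1$-increment part, and the tamed/truncated bias, and identifies the H\"older contribution $\dd^{\aa/2}$ as the leading term.

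The one substantive technical difference is how you treat the truncation bias for \eqref{MTEM}. You propose a probabilistic route: use that $b-b^{(\dd)}$ is supported on $\{|x|>(\dd^{-\theta}-1)^{1/\ell_0}\}$ and combine a crude bound $|b-b^{(\dd)}|\le C(1+|x|^{\ell_0+1})$ with a Markov tail estimate on $\P(|X^{(\dd)}_{s_\dd}|>R)$. The paper instead proves the pointwise inequality \eqref{EW11}, namely $|x-\pi^{(\dd,\theta)}(x)|\le 2^{1/(2\theta)}\dd^{1/2}|x|^{1+\ell_0/(2\theta)}$, by writing $|x-\pi^{(\dd,\theta)}(x)|=(|x|-R)\mathds 1_{\{|x|\ge R\}}\le |x|^{1+\alpha}R^{-\alpha}$ and choosing $\alpha=\ell_0/(2\theta)$. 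This buys a clean $\dd^{1/2}$ factor times a single explicit power $|x|^{1+\ell_0+\ell_0/(2\theta)}$, which is exactly what pins down the stated $p^*=2\big((\ell_0+1+\ell_0/(2\theta))\vee p\big)$; your Markov-tail argument is correct but, as you wrote it (``once enough moments are available''), does not obviously land on that specific exponent without further optimisation. A second, minor difference is that the paper does not apply Cauchy--Schwarz to the product in $R^{\dd,\vv}$ at the outer level; it first extracts the $\dd^{\beta/2}$-factor from $|X^{(\dd)}_t-X^{(\dd)}_{t_\dd}|^\beta$ pointwise and then uses Young's inequality term-by-term on the remaining product of powers, which keeps the $d$-exponent at $m^*=(\ell_0+1+\ell_0/(2\theta))\vee p$ rather than a sum of two half-exponents.
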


As an immediate application of Theorem \ref{pro1}, we obtain  the following corollary right now.
\begin{corollary}
	Assume that $({\bf H}_1)$, $({\bf A}_1)$ and $({\bf A}_2)$ are satisfied. Then, concerning the modified truncated EM scheme \eqref{MTEM} and the modified  tamed EM scheme \eqref{TEM}, for any   $p\ge1,$ there exist constants
$\ll^*,C^*, p^*,m^*>0,\dd^*\in (0,1)$ such that for all $\mu\in\mathscr	P_{p^*}(\R^d)$ and $\dd\in(0,\dd^*)$,
	\begin{align}
	\mathbb W_1\Big(\mathscr L_{X^{(\dd),\mu}_t},\pi_\8\Big)\le C^*\e^{-\ll^* t}	\mathcal W_{\rho_{|\cdot|^p}}(\mu,\pi_\8)+C^*d^{m^*}\dd^{\ff\aa2},
	\end{align}
	where $\mathbb W_1$  stands for  the standard $L^1$-Wasserstein distance and $\pi_\8$ is the invariant probability measure of $(X_t)_{t\ge0}.$
\end{corollary}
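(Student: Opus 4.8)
The plan is to read the corollary off from Theorem~\ref{pro1} by specialising the reference law $\nu$ in \eqref{pro1*} to the invariant measure $\pi_\8$, and then to trade the quasi-Wasserstein distance $\mathcal W_{\rho_{|\cdot|^p}}$ for the genuine $L^1$-Wasserstein distance $\mathbb W_1$ by means of an elementary pointwise comparison of $\rho_{|\cdot|^p}$ with $|\cdot-\cdot|$.

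First I would check that $\nu=\pi_\8$ is an admissible choice in \eqref{pro1*}, that is, $\pi_\8\in\mathscr P_{2p}(\R^d)$. Under $({\bf A}_1)$ and $({\bf A}_2)$, and using that $({\bf H}_1)$ forces $b_0$ to be of strictly sub-linear growth (so that $\<x,b_0(x)\>$ is of lower order compared with $-\ll^*|x|^2$), It\^o's formula applied to $V_q(x)=(1+|x|^2)^{q/2}$ gives, for every $q>0$, a Lyapunov inequality $(\mathscr LV_q)(x)\le-\ll_q V_q(x)+C_q$ with constants $\ll_q,C_q>0$; this is exactly $({\bf H}_2)$ for $V=V_q$. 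Consequently $(X_t)_{t\ge0}$ admits a (unique, by non-degeneracy together with the Lyapunov condition) invariant probability measure $\pi_\8$ with $\pi_\8(V_q)\le C_q/\ll_q<\infty$ for all $q>0$, hence in particular $\pi_\8\in\mathscr P_{2p}(\R^d)$ for the given $p\ge1$. Since $\pi_\8$ is stationary, $\mathscr L_{X_t^{\pi_\8}}=\pi_\8$ for every $t\ge0$, so taking $\nu=\pi_\8$ in \eqref{pro1*} yields, for all $\mu\in\mathscr P_{p^*}(\R^d)$ and $\dd\in(0,\dd^*)$,
\begin{align*}
\mathcal W_{\rho_{|\cdot|^p}}\Big(\mathscr L_{X^{(\dd),\mu}_t},\pi_\8\Big)\le C^*\e^{-\ll^*t}\,\mathcal W_{\rho_{|\cdot|^p}}(\mu,\pi_\8)+C^*d^{m^*}\dd^{\ff\aa2}.
\end{align*}

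It then remains to dominate $\mathbb W_1$ by $\mathcal W_{\rho_{|\cdot|^p}}$, and this is the only step where the hypothesis $p\ge1$ is genuinely used. I would show $\rho_{|\cdot|^p}(x,y)\ge\tfrac12|x-y|$ for all $x,y\in\R^d$: if $|x-y|\le1$ then $\rho_{|\cdot|^p}(x,y)=|x-y|(1+|x|^p+|y|^p)\ge|x-y|$; if $|x-y|>1$ then, since $p\ge1$ implies $|z|\le1+|z|^p$ for every $z\in\R^d$, one has $|x-y|\le|x|+|y|\le2(1+|x|^p+|y|^p)$, so that $\rho_{|\cdot|^p}(x,y)=1+|x|^p+|y|^p\ge\tfrac12|x-y|$. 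Passing to the infimum over couplings gives $\mathbb W_1(\mu',\nu')\le2\,\mathcal W_{\rho_{|\cdot|^p}}(\mu',\nu')$ for arbitrary $\mu',\nu'$; applying this with $\mu'=\mathscr L_{X^{(\dd),\mu}_t}$ and $\nu'=\pi_\8$, inserting the displayed estimate, and absorbing the factor $2$ into $C^*$, finishes the proof. There is no serious obstacle here: the only points requiring a little care are the moment bound $\pi_\8(|\cdot|^{2p})<\infty$ (needed so that Theorem~\ref{pro1} is applicable with $\nu=\pi_\8$) and the remark that the global comparison $\rho_{|\cdot|^p}\gtrsim|\cdot-\cdot|$ forces $p\ge1$ — for $0<p<1$ the weight $1+|x|^p+|y|^p$ fails to control $|x-y|$ at infinity — which is precisely why the corollary is confined to this range whereas Theorem~\ref{pro1} allows all $p>0$.
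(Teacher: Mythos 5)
Your proposal is correct and follows the same route as the paper: both set $\nu=\pi_\8$ in Theorem \ref{pro1}, justify this by the moment bound $\pi_\8(|\cdot|^{2p})<\8$ coming from the Lyapunov function $V_q$ of Lemma \ref{lem4}, and then pass from $\mathcal W_{\rho_{|\cdot|^p}}$ to $\mathbb W_1$ via the pointwise comparison $|x-y|\lesssim\rho_{|\cdot|^p}(x,y)$ which requires $p\ge1$. You merely spell out the details (the explicit constant $2$ in the comparison, and the re-derivation of the Lyapunov inequality) that the paper cites implicitly.
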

\begin{proof}
Note that $\mathcal W_1$ can be bounded by 	$\mathcal W_{\rho_{|\cdot|^p}}$ for $p\ge1$ and $\pi_\8$ has finite any  $p$-th moment by taking advantage of
Lemma \ref{lem4} below. Thus, the proof can be done by applying Theorem \ref{pro1} and choosing $\nu=\pi_\8$ therein.
\end{proof}

\subsection{Non-asymptotic convergence bound:  degenerate case}
As mentioned in Remark \ref{REm-}, we can also establish Theorem \ref{thm1} for the SDE \eqref{eq1-}, where the non-degenerate condition that
 $\si\si^\top\ge \lambda I_d$ is satisfied
for some constant $\lambda>0.$ In this subsection, we proceed to treat the  degenerate case: $\si\si^\top={\bf 0}$, a zero-matrix.
In particular, we shall consider  the following  degenerate SDE on $\R^{2d}:=\R^d\times\R^d:$ for any $t>0,$
\begin{align}\label{EQ1}
	\begin{cases}
 \d X_t= (aX_t+bY_t )\,\d t  \\
 \d Y_t=U(X_t,Y_t)\,\d t+ \d W_t,
\end{cases}
\end{align}
where $ a\ge 0,b>0$, $U:\R^{2d}\to \R^d$,
and $(W_t)_{t\ge 0}$ is a $d$-dimensional Brownian motion.

Suppose that
\begin{enumerate}
\item[$( {\bf B}_1)$]
 there exists a constant $L>0$ such that for all $x,x',y,y'\in \R^d,$
\begin{align*}
	| U(x,y)- U(x',y')|\le L\big(|x-x'|+|y-y'|\big);
\end{align*}
\item[$( {\bf B}_2)$]   there exist a   compact    $C^{2,2}$-function $V :\R^{2d}\to [1,\8)$, and constants $\ll_V^*,C_V^*>0$
such that for all $x,y\in \R^d$,
\begin{align*}
	(\mathscr LV)(x,y):&=\<\nn_1V(x,y), ax+by\>+\<\nn_2V(x,y),U(x,y)\>+\frac{1}{2}\Delta_2V(x,y)\\
&\le -\ll^*_V V(x,y)+C_V^*,
\end{align*}
where  $\nn_1 $ (resp. $\nn_2 $) means the gradient operator with respect to the first (resp. second) variable, and $\Delta_2$
represents the Laplacian operator with respect to the second variable;

\item[$( {\bf B}_3)$]   for $V$ in $( {\bf B}_2)$, there exist constants $L^*_V,L_V^\star>0$
and  $\eta\in[0,1)$ such that for all $x,y,x', y'\in \R^d$,
\begin{align}\label{a2}
	\big|\nn_2 V(x,y)-\nn_2 V(x',y')\big|\le L^*_V \big(V(x,y)^\eta+V(x',y')^\eta \big)\big(|x-x'|+|y-y'|\big)
\end{align}
and
\begin{align}\label{U19}
	|\nn_1V(  x,  y)|+|\nn_2 V(  x ,  y) |\le L_V^\star V(  x ,  y) .
\end{align}
\end{enumerate}

Under  $({\bf B}_1)$,   it is standard that the SDE \eqref{EQ1} is strongly well-posed (see e.g. \cite[Theorem 3.1, p.51]{Mao08})
and
the  following  EM scheme associated with \eqref{EQ1}: for any $t>0$ and $\delta\in(0,1],$
\begin{align}\label{EQ2}
	\begin{cases}
 \d X^{(\dd)}_t=(aX^{(\dd)}_{t_\dd}+bY^{(\dd)}_{t_\dd})\,\d t  \\
 \d Y^{(\dd)}_t=U(X^{(\dd)}_{t_\dd},Y^{(\dd)}_{t_
 \dd})\,\d t+ \d W_t
\end{cases}
\end{align}
is non-explosive in the $L^2$-sense in any  finite horizon.

 The following theorem provides a non-asymptotic convergence bound between $(X_t,Y_t)_{t\ge0}$ and $(X_t^{(\delta)},Y_t^{(\delta)})_{t\ge0}$ under the quasi-Wasserstein distance $\mathcal W_{\rho_V}$ which is induced by the distance-like function:  for any $(x,y),(x',y')\in\R^{2d}$,
 \begin{align}\label{U21}
 \rho_V((x,y),(x',y'))=\big(1\wedge(|x-x'|+|	y-y'|)\big)(1+V(x,y)+V(x',y')).
 \end{align}

\begin{theorem}\label{thm2}
	Assume that $({\bf B}_1)$-$({\bf B}_3)$ and suppose further there exists  $\delta^*\in(0,1]$ such that for each $t\ge0  $, $\delta\in(0,\delta^*] $ and $\nu\in\mathscr{P}(\mathbb{R}^{2d})$ satisfying $\mu(V)<\8,$
 \begin{align}\label{F10}
 \sup_{s\in[0,t]}\E V(X_{t}^{(\dd),\mu },Y_t^{(\dd),\mu })<\8.
\end{align}
Then, there exist constants
	$\ll_0,C_0,C_0^* >0$ such that for  $\mu,\nu\in \{\mu' \in \mathscr{P}(\mathbb{R}^{2d}): \mu'(\rho_V({\bf x},{\bf 0}))<\infty\},$ and $t\ge0,$
\begin{equation}\label{W12}
\begin{aligned}
 \mathcal W_{\rho_V}\Big(\mathscr L_{(X_{t}^{(\dd),\mu},Y_t^{(\dd),\mu})},\mathscr L_{(X_{t}^\nu,Y_t^\nu)} \Big) &\le C_0\e^{-\ll_0 t}	\mathcal W_{\rho_V}(\mu,\nu) +C_0^*\int_{0}^t\e^{-\ll_0(t-s)}\limsup_{\vv\to0}\E R_s^{\delta,\vv}\,\d s,
\end{aligned}
\end{equation}
 where for $t\ge0$ and $\vv>0$, 
 \begin{align*}
 R_t^{\delta,\vv}:=\big(V(   X^{\dd,\vv}_t,   Y^{\dd,\vv}_t)+ V(   X^{(\dd,\vv)}_t,   Y^{(\dd,\vv)}_t)\big) \big(  |  X^{(\dd,\vv)}_t- X^{(\dd,\vv)}_{t_\dd} |+  | Y^{(\dd,\vv)}_t- Y^{(\dd,\vv)}_{t_\dd} |\big)
 \end{align*}
 with  
 $((X^{\dd,\vv}_t,   Y^{\dd,\vv}_t),(X^{(\dd,\vv)}_t,Y^{(\dd,\vv)}_t))_{t\ge0}$ being the coupling process solving \eqref{EQ3}.
\end{theorem}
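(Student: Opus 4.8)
The plan is to run the same programme as in the proof of Theorem \ref{thm1}, with the coupling apparatus re-engineered to respect the degeneracy of \eqref{EQ1}. Fix $\mu,\nu$ in the stated class and let $((X^{\dd,\vv}_t,Y^{\dd,\vv}_t),(X^{(\dd,\vv)}_t,Y^{(\dd,\vv)}_t))_{t\ge0}$ be the coupling process solving \eqref{EQ3}, built so that $(X^{\dd,\vv}_t,Y^{\dd,\vv}_t)_{t\ge0}$ is the exact solution of \eqref{EQ1} with $\mathscr L_{(X^{\dd,\vv}_0,Y^{\dd,\vv}_0)}=\nu$ while $(X^{(\dd,\vv)}_t,Y^{(\dd,\vv)}_t)_{t\ge0}$ is the EM scheme \eqref{EQ2} with initial law $\mu$; here $\vv>0$ is a regularisation parameter entering the cut-off function and the reflection matrix so that \eqref{EQ3} is well posed, and the passage $\vv\to0$ is performed only at the very end, which is the origin of the $\limsup_{\vv\to0}$ in \eqref{W12}. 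The decisive new feature, compared with Theorem \ref{thm1}, is that the Brownian motion drives only the $Y$-coordinate, so a reflection coupling carried out directly on the $\R^{2d}$-difference would gain no contraction in the $X$-coordinate; this is remedied by (a) measuring the difference in a twisted metric and (b) reflecting the noise along the direction of the combination of increments that actually carries it.

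Concretely, write $\Delta X_t:=X^{(\dd,\vv)}_t-X^{\dd,\vv}_t$, $\Delta Y_t:=Y^{(\dd,\vv)}_t-Y^{\dd,\vv}_t$ and set $\Xi_t:=\Delta X_t+c\,\Delta Y_t$, with a constant $c>0$ small enough that, rewritten in the variables $(\Delta X_t,\Xi_t)$, the drift of $\Delta X_t$ becomes $(a-b/c)\Delta X_t+(b/c)\Xi_t$ with $a-b/c<0$, while $|\Delta X_t|+|\Xi_t|$ remains comparable, up to constants, to $|\Delta X_t|+|\Delta Y_t|$. Put $r_t:=\alpha|\Delta X_t|+|\Xi_t|$ with a weight $\alpha>0$ (depending on $a,b,L$) to be fixed, and $e_t:=\Xi_t/|\Xi_t|$; the coupling in \eqref{EQ3} drives the two $Y$-equations by $(I_d-2\chi_\vv\,e_te_t^\top)$ times one common Brownian motion, where $\chi_\vv$ is a Lipschitz approximation of the indicator of a bounded region — equal to $1$ when $r_t$ and $V$ are small and to $0$ otherwise, i.e. reflection near the diagonal and synchronous coupling far away, the degenerate counterpart of \eqref{eq3}. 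Since $\Delta X_t$ carries no noise, the randomness in $r_t$ enters only through its $|\Xi_t|$-part. Following Eberle's construction, fix a bounded, non-decreasing, concave $C^2$-function $f:[0,\8)\to[0,\8)$ with $f(0)=0$, $f'(0)=1$, $f'>0$ and $f''\le-\kappa f$ on the set where $\chi_\vv\equiv1$, and work with the semimetric
\[
\Phi_t:=f(r_t)\,\big(1+V(X^{\dd,\vv}_t,Y^{\dd,\vv}_t)+V(X^{(\dd,\vv)}_t,Y^{(\dd,\vv)}_t)\big),
\]
which by the choice of $c,\alpha,f$ is comparable, up to constants, to $\rho_V$ evaluated along the coupling; thus $\mathcal W_{\rho_V}\big(\mathscr L_{(X^{(\dd),\mu}_t,Y^{(\dd),\mu}_t)},\mathscr L_{(X^\nu_t,Y^\nu_t)}\big)\le C\,\E\Phi_t$, and, starting the coupling from an optimal coupling of $\mu,\nu$, $\E\Phi_0\le C\,\mathcal W_{\rho_V}(\mu,\nu)$.

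The heart of the proof is It\^o's formula applied to $\Phi_t$. The stochastic integral has zero mean. Differentiating $f(r_t)$: the reflected noise in $|\Xi_t|$ produces a quadratic-variation term $2c^2\chi_\vv^2\,f''(r_t)\le-2c^2\kappa\,f(r_t)$ wherever $\chi_\vv\equiv1$, which is the contraction; the drift of $r_t$, estimated via $({\bf B}_1)$ and the explicit form of the $X$-drift, contributes a term whose size is governed by $c$ and $\alpha$ — this is precisely where the degenerate hierarchy is exploited and where the coupled differential inequality in $(|\Delta X_t|,|\Xi_t|)$ is forced to close. Differentiating the Lyapunov factor: $({\bf B}_2)$ gives $(\mathscr LV)\le-\ll^*_VV+C^*_V$ along the exact marginal, hence exponential decay, while along the EM marginal the same Lyapunov drift is reproduced up to a grid error; the mixed It\^o term between $f(r_t)$ and $V$, and the second-order term arising because the reflection makes the diffusion matrix of $(\Delta X_t,\Delta Y_t)$ state-dependent, are dominated by means of \eqref{a2} and \eqref{U19} together with $\eta\in[0,1)$, exactly as the quadratic process is handled in the proof of Theorem \ref{thm1}. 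Finally, every term carrying an increment $X^{(\dd,\vv)}_{t_\dd}-X^{(\dd,\vv)}_t$ or $Y^{(\dd,\vv)}_{t_\dd}-Y^{(\dd,\vv)}_t$ — each coming with a prefactor $f'(r_t)\le1$ and a Lyapunov weight, and, through $({\bf B}_1)$ and \eqref{U19}, with a factor dominated by $V$ — is absorbed into $R^{\dd,\vv}_t$. The outcome is a differential inequality $\tfrac{\d}{\d t}\E\Phi_t\le-2\ll_0\,\E\Phi_t+C\,\E R^{\dd,\vv}_t$ for $\dd\le\dd^*$, legitimate because \eqref{F10} makes all the moments finite; Gr\"onwall's inequality, the comparison $\Phi_t\asymp\rho_V$, and the limit $\vv\to0$ then yield \eqref{W12}.

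I expect the main obstacle to be the joint calibration of the twist $c$, the weight $\alpha$, the localisation region of $\chi_\vv$ and the concave profile $f$: the reflection acts only on the $d$-dimensional $Y$-noise, and along the combined direction $e_t=\Xi_t/|\Xi_t|$, so the contraction it produces is comparatively weak, yet it must still overcome the drift of the $X$-component, which under $({\bf B}_1)$ (only $a\ge0$, no dissipativity) need not be contractive by itself — the gain therefore has to be routed through $\Xi_t$ into $\Delta X_t$ by the weighted-distance mechanism, and all these parameters must be chosen at once. A secondary difficulty is that the coupled noise makes the diffusion of the difference process state-dependent, so the It\^o correction of $V$ along the coupling is not simply $\tfrac12\Delta_2V$; controlling it relies essentially on the Hessian-type bound \eqref{a2} with $\eta<1$, mirroring the treatment of the quadratic process in the non-degenerate case.
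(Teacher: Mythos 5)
Your plan is essentially the paper's proof: the twisted variable $\Xi_t=\Delta X_t+c\,\Delta Y_t$ is the paper's $Q^{\dd,\vv}_t=Z^{\dd,\vv}_t+\gamma V^{\dd,\vv}_t$ with $c=\gamma$; the weighted metric $r_t=\alpha|\Delta X_t|+|\Xi_t|$ is the paper's $r^{\dd,\vv}_t=\aa_0|Z^{\dd,\vv}_t|+|Q^{\dd,\vv}_t|$; the reflection along $e_t=\Xi_t/|\Xi_t|$ is the paper's $\Pi(Q^{\dd,\vv}_t)$; the concave profile, the Lyapunov factor, the absorption of grid increments into $R^{\dd,\vv}_t$, and the final Gr\"onwall--then--$\vv\to0$ passage all match the paper's Lemmas~\ref{lem7}--\ref{lem9} and the proof that follows.

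Two small points to tighten. First, you have the cut-off upside down: as defined in \eqref{H1} and used in \eqref{EQ3}, $h_\vv(|Q^{\dd,\vv}_t|)$ vanishes near $|Q^{\dd,\vv}_t|=0$, i.e.\ synchronous coupling in an infinitesimal $\vv$-ball around the diagonal (precisely so that the reflection direction ${\bf n}(Q^{\dd,\vv}_t)$ is never evaluated where it is undefined) and reflection everywhere else. Your prescription "$\chi_\vv\equiv1$ when $r_t$ is small, $0$ otherwise" would put the reflection at the diagonal and lose well-posedness of \eqref{EQ3}. The large-scale split you have in mind — noise-driven contraction near the diagonal, Lyapunov contraction far away — is real, but it is implemented through the truncated profile $g(r)=f(r\wedge\ell_0)$ (so $g'_-=g''=0$ for $r>\ell_0$), not through $h_\vv$; this is also why the paper uses It\^o--Tanaka rather than a $C^2$ chain rule. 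Second, the semimetric needs an adjustable weight $\beta$ in front of the Lyapunov factor, $g(r_t)\bigl(1+\beta V+\beta V\bigr)$; choosing $\beta$ small is essential to make the cross term $\Theta^\vv_2$ in \eqref{U10}, which scales like $\beta^{1-\eta}$, absorbable into the contraction from $\Theta^\vv_1$. You set this weight to $1$, which would not close the estimate as written; add $\beta$ to your list of parameters to be calibrated jointly with $c,\alpha,f$.
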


Below, we state some explanations concerning Theorem \ref{thm2}.

\begin{remark}
\begin{enumerate}
\item[(i)] Once the Lyapunov function $V$ is given, the estimate of $\E R_t^{\delta,\vv}$ comes down to
the uniform-in-time moment estimate of $(X_t^\nu,Y_t^\nu)_{t\ge0}$ and $(X_t^{(\delta),\mu},Y_t^{(\delta),\mu})_{t\ge0}$; see Remark \ref{remark-} and 
\eqref{F8} below for further details. 
As stated in Remark \ref{REm-}, Theorem \ref{thm2} is still valid whenever $\d W_t$ in \eqref{EQ1} is replaced by $\sigma\d W_t$, where $\sigma\sigma^\top$ is positive definite.
To prove Theorem \ref{thm2}, we still adopt the coupling approach. Since the dissipation in the $x$-direction is missing,
 as far as  the degenerate SDEs \eqref{EQ1} and \eqref{EQ2} are concerned,
the associated construction of coupling is quite different  from the one given in \eqref{eq3}.  To tackle the difficulty arising from the deficiency of dissipativity    in the $x$-direction, we have no alternative but to change the natural  metric in $\R^{2d}$ into a
comparable one, which in particular includes the cross term between the $x$-direction and the $y$-direction.
So, in \eqref{EQ3} below, we write $h_\vv(|x-x'+\alpha(y-y')|)$ and $\Pi(x-x'+\alpha(y-y'))$
 for a well-chosen $\alpha>0$  instead of $h_\vv(|x-x'|+|y-y'|)$ and $\Pi( y-y' )$, respectively,
 for all $(x,y),(x',y')\in \R^{2d} $. As for Langevin dynamics,  the coupling approach was applied in \cite{EGZ19} to the transformed version in order to investigate the weak contraction under the quasi-Wasserstein distance, where the gradient of the underlying   potential is also Lipschitz continuous. In contrast to the one provided  in  \cite{EGZ19}, the coupling constructed in the present work is much more direct since there is no a transform involved; see  \eqref{EQ3} for more details.

\item[(ii)] In Theorem \ref{thm1}, we focus on  the case that the drift term under consideration is of super-linear growth. Nonetheless, one might wonder why we are confined in Theorem \ref{thm2} to the setting that the underlying drift   is Lipschitz continuous (so it is of linear growth). To demonstrate this, we take the Hamiltonian function $H(x,y)=U(x)+\frac{1}{2}|y|^2$, $x,y\in\R^d,$ where $U:\R^d\to\R$
 is a smooth potential. Subsequently, the following underdamped Langevin    SDE:
 \begin{align}\label{E2-1}
	\begin{cases}
\d X_t=Y_t\,\d t  \\
 \d Y_t=-\big(\nn U(X_t)+Y_t\big)\,\d t+ \d W_t
\end{cases}
\end{align}
 is  available. Below, we   in particular choose $U(x)=\frac{1}{4}|x|^4-\frac{1}{2}|x|^2, x\in\R^d$, which is a well-known double well potential,
and write $(X^{(\delta)}_t,Y^{(\delta)}_t)_{t\ge0}$  determined by
the continuous-time tamed EM scheme corresponding to   the  SDE \eqref{E2-1}. To verify the moment boundedness of $(X^{(\delta)}_t,Y^{(\delta)}_t)_{t\ge0}$ in an  infinite horizon, we need to perturb the Hamiltonian  function $H$   into the Lyapunov function $V(x,y):=c_0+U(x)+\frac{1}{2}|y|^2+\alpha\<x,y\>$, where $c_0,\alpha>0$  are chosen so that $V$ is non-negative.
 With the preceding  $V$ at hand, it necessitates to estimate the quantity $|\<y,\nn U^\delta(x)\>|$ for the sake of the non-explosion in  the moment sense in an infinite horizontal, where $\nn U^\delta$ is the tamed version of $\nn U$. Note that $|\nn U^\delta(x)|\le c_1+c_2\delta^{-\theta}|x|$ for some constants $c_1,c_2>0 $  and that the prefactor $\delta^{-\theta}$ goes to infinity as the step size $\delta$
  approaches zero.  As a consequence, the term $|\<y,\nn U^\delta(x)\>|$  is out of control, and cannot be offset by the other dissipative terms on account of the difference in  the direction of variables.    Based on the aforementioned analysis, in the present work, we concern merely  the case that the drift under investigation is of linear growth. Nonetheless, the study on the long-term behavior of approximation schemes associated with underdamped Langevin SDEs with super-linear drifts  is a very interesting topic, which deserves to be investigated in depth in our future work. 
\end{enumerate}
\end{remark}

\label{ssub2}\subsection{Application to a kinetic Langevin sampler}
In this subsection, we apply Theorem \ref{thm2} to the  unadjusted Langevin sampler associated with \eqref{E2-1}, which
reads   as below:
\begin{align}\label{E2-2}
	\begin{cases}
 \d X^{(\dd)}_t=Y^{(\dd)}_{t_\dd}\,\d t  \\
 \d Y^{(\dd)}_t=-\big(\nn U(X^{(\dd)}_{t_\dd})+ Y^{(\dd)}_{t_\dd}\big)\,\d t+ \d W_t.
\end{cases}
\end{align}

For the potential $U $,  we assume that
\begin{enumerate}
\item[$( {\bf B}_4)$]  $U:\R^d\to\R$ is a $C^1$-function satisfying $\lim_{|x|\to\infty}U(x)=\infty, $ and
there exist constants $\ll_1,\ll_2,C^\star,L_U>0$ such that for all $x,y\in \R^d$,
\begin{align}\label{U22}
\<x,\nn U(x)\>\ge \ll_1 |x|^2+\ll_2 U(x)-C^\star,
\end{align}
and
\begin{align}\label{U-}
	|\nn U(x)-\nn U(y)|\le L_{U}|x-y|.
\end{align}
\end{enumerate}

The following theorem,  as an application of Theorem \ref{thm2},  reveals a non-asymptotic convergence bound between $( X_t,Y_t )_{t\ge0}$ solving \eqref{E2-1} and $(X_t^{(\delta)},Y_t^{(\delta)})_{t\ge0}$ determined by \eqref{E2-2}, and meanwhile furnishes the associated convergence rate.

\begin{theorem}\label{prolem}
Under $({\bf B}_4)$, there exist constants
	$\ll_0,C_0>0$ and $\delta_*\in(0,1]$ such that for all $\mu,\nu \in\mathscr P_3(\R^{2d})$, $\ll \in (0,2\ss{\lambda_1}\wedge\frac{1}{4})$, $\delta\in(0,\delta_*]$ and $t\ge0,$
\begin{equation}\label{U29}
	\mathcal W_{\rho_{  V_\lambda}}\Big(\mathscr L_{(X^{(\dd),\mu}_{t},Y^{(\dd),\mu}_{t})},\mathscr L_{(X^{\nu}_{t},Y_t^\nu)}\Big)\le C_0\big(\e^{-\ll_0 t}	\mathcal W_{\rho_{  V_\lambda}}(\mu,\nu)+\dd^{\ff12} d^{\ff32}\big),
\end{equation}
and
\begin{align}\label{U33}
\mathbb W_1\Big( \mathscr L_{(X^{(\dd),\mu}_{t},Y^{(\dd),\mu}_{t})}, \pi_\8\Big)\le 	C_0\big(\e^{-\ll_0 t}	\mathcal W_{\rho_{  V_\lambda}}(\nu,\pi_\8)+\dd^{\ff12} d^{\ff32}\big),
\end{align}
where $\pi_\8$ is the  invariant probability measure of $(X_t,Y_t)_{t\ge0},$ and $\rho_{V_\lambda}$ is defined  as in  \eqref{U21} with $V$ therein being replaced by $  V_\lambda$, defined as below
\begin{align}\label{F7}
 V_\lambda(x,y):=1+U(x)-\min_{x\in\R^d}U(x)+\ff14\big(|x+y|^2+|y|^2-\ll|x|^2\big),\quad (x,y)\in\R^{2d}.   	
\end{align}
\end{theorem}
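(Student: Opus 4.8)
The plan is to deduce Theorem \ref{prolem} directly from Theorem \ref{thm2} by verifying its hypotheses for the specific SDE \eqref{E2-1}/\eqref{E2-2} with the choice $V=V_\lambda$ given in \eqref{F7}. Here the structural data are $a=0$, $b=1$ (so the $x$-equation reads $\d X_t=Y_t\,\d t$) and $U(x,y)=-(\nn U(x)+y)$, and one checks that $( {\bf B}_1)$ holds with $L=L_U\vee 1$ thanks to the Lipschitz bound \eqref{U-}. First I would verify the Lyapunov condition $( {\bf B}_2)$: compute $\mathscr L V_\lambda$ for the generator of \eqref{E2-1}, using $\nn_1 V_\lambda(x,y)=\nn U(x)+\tfrac12(x+y)-\tfrac\ll2 x$, $\nn_2 V_\lambda(x,y)=\tfrac12(x+y)+\tfrac12 y$ and $\Delta_2 V_\lambda=\tfrac d2$. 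The cross terms involving $\nn U$ cancel, and after collecting terms one is left with a quadratic form in $(|x|^2,|y|^2,\<x,y\>)$ plus a multiple of $U$ minus $\min U$; invoking \eqref{U22} to control $\<x,\nn U(x)\>$ from below and using $U-\min U\ge 0$ gives $(\mathscr L V_\lambda)(x,y)\le -\ll_V^* V_\lambda(x,y)+C_V^*$ for a suitable $\ll_V^*>0$, provided $\lambda$ is small enough, e.g. $\ll\in(0,2\ss{\lambda_1}\wedge\tfrac14)$ as in the statement (the $2\ss{\lambda_1}$ threshold is exactly what makes the quadratic form $|x+y|^2+|y|^2-\ll|x|^2$ comparable to a genuine norm once the $\ll_1|x|^2$ dissipation from \eqref{U22} is taken into account, and it also guarantees $V_\lambda\ge 1$ after the constant shift). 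That $V_\lambda$ is a compact $C^{2,2}$-function follows from $U\in C^1$ together with $\lim_{|x|\to\infty}U(x)=\infty$ — strictly one needs $U\in C^2$ for $( {\bf B}_2)$/$( {\bf B}_3)$, but since $\nn U$ is globally Lipschitz by \eqref{U-} the term $U$ is automatically $C^{1,1}$ and all the second-order quantities appearing in $\mathscr L V_\lambda$ and in $( {\bf B}_3)$ are well defined a.e. and bounded in the required way; alternatively one mollifies $U$ and passes to the limit.

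Next I would check $( {\bf B}_3)$. The gradient $\nn_2 V_\lambda(x,y)=\tfrac12 x+y$ is affine, hence globally Lipschitz with a dimension-free constant, so \eqref{a2} holds with $\eta=0$ and $L_V^*$ absolute. For \eqref{U19}, both $\nn_1 V_\lambda$ and $\nn_2 V_\lambda$ grow at most linearly in $(x,y)$ while $V_\lambda$ contains the quadratic block $\tfrac14(|x+y|^2+|y|^2-\ll|x|^2)$ which, for $\ll<1$, dominates $|x|^2+|y|^2$ up to a constant; combining with $V_\lambda\ge 1$ gives $|\nn_1V_\lambda|+|\nn_2 V_\lambda|\le L_V^\star V_\lambda$. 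Then I would verify the uniform-in-time second-moment condition \eqref{F10} for the tamed-free EM scheme \eqref{EQ2} specialised to \eqref{E2-2}: since $\nn U$ is globally Lipschitz (linear growth), a standard discrete Lyapunov argument applied to $\E V_\lambda(X^{(\dd)}_{k\dd},Y^{(\dd)}_{k\dd})$ — using the one-step expansion of $V_\lambda$ along \eqref{EQ2}, the $L^2$-isometry of the Brownian increment, and the same sign structure as in the continuous computation, valid for $\dd$ below some $\dd_*$ — yields $\sup_{k}\E V_\lambda(X^{(\dd)}_{k\dd},Y^{(\dd)}_{k\dd})<\8$, and interpolation over $[t_\dd,t]$ gives the continuous statement \eqref{F10}. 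With $( {\bf B}_1)$–$( {\bf B}_3)$ and \eqref{F10} in hand, Theorem \ref{thm2} applies and yields \eqref{W12} with $\rho_{V_\lambda}$.

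It remains to bound the remainder $\limsup_{\vv\to0}\E R_s^{\delta,\vv}$ by $C\,\dd^{1/2}d^{3/2}$ and to pass from $\mathcal W_{\rho_{V_\lambda}}$ to $\mathbb W_1$. As explained in Remark \ref{remark-}, $\E R_s^{\delta,\vv}$ is independent of $\vv$ and reduces, after Hölder, to the product of a one-step displacement term $\big(\E(|X^{(\dd),\mu}_s-X^{(\dd),\mu}_{s_\dd}|+|Y^{(\dd),\mu}_s-Y^{(\dd),\mu}_{s_\dd}|)^2\big)^{1/2}$ and a uniform moment factor $1+(\E V_\lambda(X^{\nu}_s,Y^{\nu}_s)^{?})+(\E V_\lambda(X^{(\dd),\mu}_s,Y^{(\dd),\mu}_s)^{?})$; the displacement over one step of size $\dd$ is $O(\dd)$ from the drift part (whose expectation is controlled by the uniform moment bound, hence by the dimension through the $\tfrac d2$ in $\mathscr L V_\lambda$) plus $O(\dd^{1/2}d^{1/2})$ from the Brownian increment $\DD W$, the latter being the dominant contribution and giving the $\dd^{1/2}d^{1/2}$ scaling; multiplying by the moment factor, which is $O(d)$ (since $\E V_\lambda\lesssim d$ uniformly, again because the only $d$-dependent source in the Lyapunov drift is $\tfrac d2\Delta_2$), produces the stated $\dd^{1/2}d^{3/2}$. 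Integrating $\int_0^t \e^{-\ll_0(t-s)}\dd^{1/2}d^{3/2}\,\d s\le \ll_0^{-1}\dd^{1/2}d^{3/2}$ and absorbing constants into $C_0$ gives \eqref{U29}. Finally, for $\ll<1$ one has $1\wedge(|x-x'|+|y-y'|)\le \rho_{V_\lambda}((x,y),(x',y'))$ pointwise (since $V_\lambda\ge 1$), so $\mathbb W_1\le \mathcal W_{\rho_{V_\lambda}}$; applying \eqref{U29} with $\nu=\pi_\8$, using that $\pi_\8$ has finite third moment (from the Lyapunov condition $( {\bf B}_2)$, e.g. via Lemma \ref{lem4} or a direct integration argument), and the triangle inequality yields \eqref{U33}. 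The main obstacle I anticipate is the careful bookkeeping of the dimension dependence — ensuring that every constant entering $\ll_V^*$, the discrete moment bound, and the one-step displacement estimate is tracked so that the final power $d^{3/2}$ (and not something larger) emerges; a secondary technical point is handling the fact that $( {\bf B}_2)$/$( {\bf B}_3)$ nominally ask for a $C^{2,2}$ Lyapunov function whereas $U$ is only assumed $C^1$ with Lipschitz gradient, which is resolved either by a mollification argument or by observing that all second-order expressions actually used are those of the quadratic part of $V_\lambda$ plus the a.e.-defined Hessian of $U$, both under control.
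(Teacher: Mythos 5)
Your proposal follows essentially the same route as the paper's own proof: specialise $({\bf B}_1)$–$({\bf B}_3)$ to $V=V_\lambda$, establish a uniform-in-time moment bound for the discretisation (this is precisely the role of the paper's Lemma \ref{llem}), feed this into Theorem \ref{thm2}, bound the remainder via H\"older's inequality together with the one-step displacement estimate to get the $\delta^{1/2}d^{3/2}$ rate, and finally pass to $\mathbb W_1$ by the metric comparison and $\nu=\pi_\infty$. The calculation of $\mathscr L V_\lambda$, the role of the constraint $\lambda<2\sqrt{\lambda_1}\wedge\tfrac14$ (positivity of the quadratic form via Young and the comparison \eqref{U24}), the observation that $\nn_2 V_\lambda$ is affine so that \eqref{a2} holds with $\eta=0$, and the origin of the $d^{3/2}$ factor all match what the paper does.

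Two small slips worth noting. First, $\Delta_2 V_\lambda=d$ rather than $\tfrac{d}{2}$ (the Itô correction $\tfrac12\Delta_2 V_\lambda=\tfrac d2$ is what appears in the drift estimate, as in the paper's \eqref{c2}); this does not alter the structure. Second, the inference ``$1\wedge(|x-x'|+|y-y'|)\le\rho_{V_\lambda}$ hence $\mathbb W_1\le\mathcal W_{\rho_{V_\lambda}}$'' is not quite enough, because $\mathbb W_1$ uses the unbounded cost $|x-x'|+|y-y'|$; what one needs (and what is actually true since $V_\lambda$ is quadratic and bounded below by $1$) is $|x-x'|+|y-y'|\le C\rho_{V_\lambda}((x,y),(x',y'))$, with the constant $C$ absorbed into $C_0$. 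The paper performs the same abbreviated step, so this is a harmless omission shared with the source. Your regularity remark about $U$ being only $C^1$ is somewhat beside the point, since in the degenerate setting the only second derivatives of $V_\lambda$ entering the generator and the coupling estimates are those in the $y$-variable, and $V_\lambda$ is quadratic in $y$; the paper implicitly relies on this too.
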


In the end of this subsection, we make some further comments.
 \begin{remark}	 	
Under \eqref{U-}, along with the partially dissipative condition imposed on $\nn U,$ the $L^1$-Wasserstein contraction of three kinetic Langevin samplers and the quantitative error bound   between SGD with momentum and underdamped Langevin diffusion were studied  in \cite{SW} and \cite{GWXY}, respectively. Whereas, Theorem \ref{prolem} is established under the Lyapunov type condition (see \eqref{U22} for detail) instead of the partially dissipative condition enforced in \cite{GWXY,SW}. Whereas, the $1/2$-order convergence rate of the non-asymptotic $L^1$-Wasserstein error estimate  can still be achieved.
 \end{remark}

The rest of this paper is unfolded as follows. In Section \ref{sec2}, we first show that the modified EM scheme 
\eqref{eq2} is well-posed (see Lemma \ref{lem0}).  After that, via constructing an asymptotic coupling by reflection, 
the proof of Theorem \ref{thm1} is finished  based on several preliminary lemmas (see Lemma  \ref{lem1}, Lemma \ref{Lemma1}, Lemma \ref{lem2} and Lemma \ref{lem3}) concerned  with the radial process on the coupling process. Next,
 we provide a criterion to examine that the scheme \eqref{eq2} has a uniform-in-time moment estimate (see Lemma \ref{lem0-1}). Subsequently, concerning the modified truncated/tamed EM scheme, 
 the proof of Theorem \ref{pro1} is completed by checking the prerequisites enforced in Theorem \ref{thm1} and 
 Lemma \ref{lem0-1}, one by one. In Section \ref{sec3}, by the aid of a novel mixture of the synchronous coupling and the reflection coupling and taking advantage of  estimates (see Lemma \ref{lem7}, Lemma \ref{lem8} and Lemma \ref{lem9}) related to the transformed radial process on the coupling process,  we implement  the proof of Theorem \ref{thm2}. In addition, Section \ref{sec3} is also devoted to the proof of Theorem \ref{prolem} by invoking the  
 uniform moment estimate (see Lemma \ref{llem}) of the scheme \eqref{E2-2}.

\section{Proofs of Theorem \ref{thm1} and Theorem \ref{pro1}}\label{sec2}
The proof of Theorem \ref{thm1} is based on the coupling approach (see e.g.  \cite{DEGZ,Eberle}). Motivated by the asymptotic couplings by reflection designed  in \cite{Wang15} to handle the gradient/H\"older estimates as well as the exponential convergence of the Markov semigroups associated with monotone SPDEs, we aim to construct a mixed reflection coupling for two SDEs with different drifts. To this end, we prepare for some warm-up materials.  Let $(\bar W_t)_{t\ge0}$ and $(\tilde W_t)_{t\ge0}$ be independent copies of $(W_t)_{t\ge0}$. For a given parameter $\vv\in(0,1]$, let $h_{\varepsilon}:[0, \infty) \rightarrow[0,1]$ be  a continuous function such that $h_\vv(r)\equiv0$ for $ r\in[0,\vv],$ and $h_\vv(r)\equiv1$ for  $r\in [2\vv,\8),$ and set $h^*_\vv(r):=\ss{1-h_\vv(r)^2}$, $r\ge0.$
As an illustrative example, we  choose
\begin{align}\label{H1}
h_{\varepsilon}(r)\equiv0, ~  r\in[0,\vv]; \quad  h_{\varepsilon}(r)= 1-\mathrm{e}^{-\frac{r-\varepsilon}{2 \varepsilon-r}},~   r \in(\varepsilon, 2 \varepsilon); \quad h_{\varepsilon}(r)\equiv1,  ~ r \in [ 2 \varepsilon,\8).
\end{align}
Let $\R^d\ni x\mapsto 	\Pi(x)$ be an orthogonal $d\times d$-matrix defined by
\begin{align}\label{W3}
	\Pi(x)=I_{d }-2{\bf n}(x){\bf n}(x)^\top \mathds 1_{\{|x|\neq 0\}},
\end{align}
 where  the unit vector ${\bf n}(x):=x/|x|$ for a non-zero vector $x\in\R^d$,
 and $x^\top$ denotes the transpose of $x\in\R^d.$ With the previous notation,
we consider the following coupled SDE: for $\vv>0$ and $t\ge0,$
\begin{align}\label{eq3}
\begin{cases}
\d  X^{ \delta,\vv}_t= b(   X^{ \delta,\vv}_t)  \,\d t+ h_\vv(|Z^{ \dd,\vv }_t|)\, \d  \bar W_t+  h^*_\vv(|Z^{ \dd,\vv }_t|)\,\d \tilde W_t,\\
\d  X^{(\dd,\vv)}_t= b^{(\delta)}( X^{(\dd,\vv)}_{t_\dd}) \,\d t+ h_\vv(|Z^{ \dd,\vv }_t|)\Pi(Z^{ \dd,\vv }_t)\, \d   \bar W_t+ h^*_\vv(|Z^{(\dd,\vv)}_t|)\,\d \tilde W_t
\end{cases}
\end{align}
with the initial value $(X^{\dd,\vv }_0, X^{(\dd,\vv)}_0)=(X_0,X_0^{(\delta)})$,
where $Z^{ \dd,\vv }_t:= X^{\dd,\vv }_t- X^{(\dd,\vv)}_t$. Via L\'{e}vy's characterization of Brownian motions, besides  $h_\vv(r)^2+h_\vv^*(r)^2=1, r\ge0,$ it follows that
\begin{align*}
B_t:= \int_0^th_\vv(|Z^{ \dd,\vv }_s|)\, \d  \bar W_s+  \int_0^th^*_\vv(|Z^{ \dd,\vv }_s|)\,\d  \tilde W_s,\quad t\ge0
\end{align*}
is a standard Brownian motion. Furthermore, $\int_0^t\Pi(Z^{\dd,\vv}_s)\, \d   \bar W_s, t\ge0,$ is a Brownian motion, which is independent of $(\tilde W_t)_{t\ge0}$. Then,  under $({\bf H}_1)$,  the SDE \eqref{eq3} is weakly well-posed (see e.g. \cite[Theorem 1]{HS}) by recalling that the SDE \eqref{eq1} is weakly well-posed.

The interpretation of the construction given in \eqref{eq3} is presented as below. In case the distance between $  X^{\dd, \vv }_t$ and $ X^{(\dd,\vv)}_t$ is smaller than $\vv$ (i.e., $|Z^{ \dd,\vv }_t|\le\vv$),
we make use of the synchronous coupling; we employ the reflection coupling as soon as $|Z^{ \dd,\vv }_t|\ge\vv$;  in between, a mixture of the synchronous  coupling and the reflection coupling is exploited.

Now, we compare the coupling constructed in \eqref{eq3} with the approximate reflection coupling adopted in \cite{Suzuki}.
\begin{remark}
As shown in \cite[Lemma 2.1]{BH2},   for   given $\vv>0,$ $( X^{\dd,\vv }_t, X^{(\dd,\vv)}_t)_{t\ge0}$ is a genuine coupling of $(X_t,X^{(\delta)}_t)_{t\ge0}$. Nevertheless, for fixed $\vv>0$,  the coupled process $(X_t,Y^{\vv}_t)_{t\ge 0}$ given in \cite[(4.1) and (4.3)]{Suzuki} is not a coupling process.
To show that the associated weak limiting process of $(X_t,Y_t^{ \vv })_{\vv>0}$ is the desired coupling process, plentiful additional work (see Lemma 3.1, Lemma 3.2 and Lemma 3.3 therein) needs to be done. Therefore, the coupling constructed in the present paper is essentially different from the counterpart in \cite{Suzuki}.
\end{remark}

\subsection{Proof of Theorem \ref{thm1}}

In the following context,  we prepare some preliminary  lemmas  to avoid that the proof of Theorem \ref{thm1} is too lengthy.

\begin{lemma}\label{lem0}
Under  $({\bf H}_3)$ and $({\bf H}_4)$, there exists a constant $C_0>0$ such that
for any $t\ge0$ and  $\delta\in(0,1]$,
\begin{align}\label{W5}
\E\big(V(X^{(\delta)}_t)\big|\mathscr F_0\big)\le \big(V(X^{(\delta)}_0) +C_0t\big)\e^{C_0t}.
\end{align}
This further implies that $(X^{(\delta)}_t)_{t  \ge0 }$ is not  explosive  almost surely.
\end{lemma}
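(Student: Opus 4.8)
The plan is to run It\^o's formula on the Lyapunov function $V$ along the modified scheme \eqref{eq2}, localise by exit times to legitimise it, close a Gr\"onwall inequality, and then use the compactness of $V$ to turn the resulting moment bound into almost-sure non-explosion. Concretely, set $\tau_n:=\inf\{t\ge0:|X^{(\dd)}_t|\ge n\}$, so that on $[0,t\wedge\tau_n]$ the coefficients and $V,\nn V,\nn^2V$ are bounded and all objects below are well defined. On each sub-interval $[k\dd,(k+1)\dd)$ the drift $b^{(\dd)}(X^{(\dd)}_{k\dd})$ is $\mathscr F_{k\dd}$-measurable and frozen, so \eqref{eq2} is a Brownian motion with locally constant drift and, since $V\in C^2$, It\^o's formula gives
\begin{align*}
V(X^{(\dd)}_{t\wedge\tau_n})=V(X^{(\dd)}_0)+\int_0^{t\wedge\tau_n}\!\Big(\big\<\nn V(X^{(\dd)}_s),b^{(\dd)}(X^{(\dd)}_{s_\dd})\big\>+\tfrac12\DD V(X^{(\dd)}_s)\Big)\,\d s+\int_0^{t\wedge\tau_n}\!\big\<\nn V(X^{(\dd)}_s),\d W_s\big\>,
\end{align*}
where the stochastic integral is a genuine martingale after truncation (its integrand is bounded on $[0,\tau_n]$).

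The core of the argument is to dominate the drift integrand by a constant multiple of $1+V(X^{(\dd)}_s)$ plus a term controlled at the previous mesh point. The Laplacian part is immediate from \eqref{EW-}: $\tfrac12\DD V(x)\le\tfrac{\ss d}{2}\|\nn^2V(x)\|_{\rm HS}\le\tfrac{\ss d}{2}L_V(1+V(x))$ since $V\ge1$ and $\eta<1$. For the first part, on $s\in[k\dd,(k+1)\dd)$ I would write $\<\nn V(X^{(\dd)}_s),b^{(\dd)}(X^{(\dd)}_{k\dd})\>=\<\nn V(X^{(\dd)}_{k\dd}),b^{(\dd)}(X^{(\dd)}_{k\dd})\>+\<\nn V(X^{(\dd)}_s)-\nn V(X^{(\dd)}_{k\dd}),b^{(\dd)}(X^{(\dd)}_{k\dd})\>$, bound the first summand by $L_V(1+V(X^{(\dd)}_{k\dd}))$ via \eqref{WE}, and Taylor-expand $\nn V$ along the segment from $X^{(\dd)}_{k\dd}$ to $X^{(\dd)}_s$ in the correction, so it is at most $\sup_{u\in[0,1]}\|\nn^2V(\xi_u)\|_{\rm HS}\,|X^{(\dd)}_s-X^{(\dd)}_{k\dd}|\,|b^{(\dd)}(X^{(\dd)}_{k\dd})|$ with $\xi_u:=X^{(\dd)}_{k\dd}+u(X^{(\dd)}_s-X^{(\dd)}_{k\dd})$. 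Now \eqref{EW-} controls the Hessian by $L_V(1+V(\xi_u)^\eta)$, one has $|X^{(\dd)}_s-X^{(\dd)}_{k\dd}|\le\dd|b^{(\dd)}(X^{(\dd)}_{k\dd})|+|W_s-W_{k\dd}|$, and $({\bf H}_4)$ gives $|b^{(\dd)}(X^{(\dd)}_{k\dd})|\le c_*+c^*\dd^{-\theta}|X^{(\dd)}_{k\dd}|$. Two mechanisms keep everything linear in $V$: first, the constraint $\theta\le1/2$ together with $\dd\le1$ forces $\dd\,|b^{(\dd)}|^2\1 1+|X^{(\dd)}_{k\dd}|^2$ and, after taking conditional moments of the Brownian increments, $\dd^{-\theta}(\E[|W_s-W_{k\dd}|^q\mid\mathscr F_{k\dd}])^{1/q}\1\dd^{(1-2\theta)/2}\le1$, so the $\dd^{-\theta}$-blow-up of the modified drift is absorbed; second, \eqref{JJ}, applied with the interior point $\xi_u$ in the role of $x$, converts $V(\xi_u)^\eta|X^{(\dd)}_{k\dd}|^2$ into $L_V(1+V(\xi_u)+V(X^{(\dd)}_{k\dd}))\1 1+V(X^{(\dd)}_s)+V(X^{(\dd)}_{k\dd})$, and a Young inequality with exponents matched to $\eta\in[0,1)$ ensures the running value $V(X^{(\dd)}_s)$ is never raised to a power exceeding $1$. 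Assembling these bounds, taking $\E[\,\cdot\mid\mathscr F_0]$ via the tower property over the mesh and writing $\phi_n(t):=\E[V(X^{(\dd)}_{t\wedge\tau_n})\mid\mathscr F_0]$, one reaches $\phi_n(t)\le V(X^{(\dd)}_0)+C_0\int_0^t(1+\phi_n(s))\,\d s$ with $C_0$ independent of $n,\dd,t$ and $X^{(\dd)}_0$; Gr\"onwall's inequality (in the form exploiting $1-\e^{-x}\le x$) then gives $\phi_n(t)\le(V(X^{(\dd)}_0)+C_0t)\e^{C_0t}$.

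It remains to let $n\to\8$. Since $V$ is a compact function, $v_n:=\inf_{|x|\ge n}V(x)\to\8$, while $V(X^{(\dd)}_{t\wedge\tau_n})\ge v_n$ on $\{\tau_n\le t\}$; hence $\P(\tau_n\le t)\le v_n^{-1}(V(X^{(\dd)}_0)+C_0t)\e^{C_0t}\to0$, so $\tau_\8:=\lim_n\tau_n=\8$ almost surely, i.e.\ $(X^{(\dd)}_t)_{t\ge0}$ is non-explosive, and Fatou's lemma passes the moment bound to the limit, yielding \eqref{W5}. The step I expect to be the main obstacle is precisely the off-diagonal correction above: $\nn V$ and $\nn^2V$ are evaluated at the current state $X^{(\dd)}_s$ while $b^{(\dd)}$ is evaluated at the frozen state $X^{(\dd)}_{s_\dd}$, and one must extract a genuinely linear-in-$V$ bound from the interplay of \eqref{EW-}, \eqref{JJ} and $({\bf H}_4)$ without letting either the possibly large prefactor $\dd^{-\theta}$ or a super-linear power of $V$ survive — this is exactly where $\theta\le1/2$ and $\eta<1$ enter essentially.
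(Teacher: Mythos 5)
Your proposal follows essentially the same route as the paper: apply It\^o's formula to $V$ along the scheme, isolate the correction $\langle \nabla V(X^{(\delta)}_s)-\nabla V(X^{(\delta)}_{s_\delta}),b^{(\delta)}(X^{(\delta)}_{s_\delta})\rangle$ (the paper's $\Lambda^\delta_t$), Taylor-expand it with \eqref{EW-}, absorb the $\delta^{-\theta}$ blow-up via $({\bf H}_4)$ and $\theta\le1/2$, linearise in $V$ via \eqref{JJ} and $\eta<1$, close with Gr\"onwall, and then convert the uniform moment bound into non-explosion through compactness of $V$ and a localisation sequence. The only cosmetic difference is that you localise by $\tau_n$ before applying It\^o and pass to the limit by Fatou, whereas the paper applies It\^o first and introduces $\tau_n$ only for the non-explosion claim — your ordering is marginally cleaner but mathematically the content is identical.
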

\begin{proof}
Below, for notational  brevity, we set
\begin{align*}
\Lambda^\delta_t: =\int_0^t \big \<  \nn V(X^{(\delta)}_s) -  \nn V(X^{(\delta)}_{s_\dd}) , b^{ (\delta) } (X^{(\delta)}_{s_\dd})\big \>  \,\d s,\quad t\ge0.
\end{align*}
By It\^o's formula and Young's inequality, besides the fact that $\DD V(x)\le \ss d\|\nn^2 V(x)\|_{\rm HS}, x\in\R^d$,
it follows from \eqref{WE}   that for some constants $C_1,C_2>0,$
\begin{equation}\label{W6}
\begin{aligned}
\E\big(V(X^{(\delta)}_t)\big|\mathscr F_0\big)&=V( X^{(\delta)}_0)+\int_0^t  \E\Big(\big\<  \nn V(X^{(\delta)}_s) , b^{ (\delta) } (X^{(\delta)}_{s_\dd}) \big\>+\frac{1}{2} \DD V(X^{(\delta)}_s)\Big|\mathscr F_0\Big)  \,\d s \\
&=V( X^{(\delta)}_0)+\int_0^t\E\big(\big\<  \nn V(X^{(\delta)}_{s_\dd}) , b^{ (\delta) } (X^{(\delta)}_{s_\dd}) \big\>\big|\mathscr F_0\big)\,\d s\\  &\quad+\E\big(\Lambda^\delta_t\big|\mathscr F_0\big)+\frac{1}{2} \int_0^t\E\big(\DD V(X^{(\delta)}_s)\big|\mathscr F_0\big)\,\d s\\
&\le V( X^{(\delta)}_0)+C_1\int_0^t\big( \E\big( V(X^{(\delta)}_{s_\dd})\big|\mathscr F_0\big) +\E\big( {V(X^{(\delta)}_s)} \big|\mathscr F_0\big)\big)\,\d s  \\
&\quad+\E\big(\Lambda^\delta_t\big|\mathscr F_0\big) +C_2t.
\end{aligned}
\end{equation}
Note from the approximation scheme \eqref{eq2} that $\Lambda^\delta_t$ can be rewritten as below:
\begin{align*}
\Lambda^\delta_t&=\int_0^t \int_0^1\big< \big(\nn^2 V(X^{(\delta)}_{s_\dd}+u(X^{(\delta)}_{s_\dd}-X^{(\delta)}_s ))\big)  \\
&\qquad\qquad\times\big(b^{ (\delta) } (X^{(\delta)}_{s_\dd})(s-s_\delta)+W_s-W_{{s_\delta}}\big), b^{ (\delta) } (X^{(\delta)}_{s_\dd}) \big>  \,\d u\,\d s.
\end{align*}
Thus, by virtue of (${\bf H}_3$), (${\bf H}_4$) as well as $\theta\in(0,1/2] $,
 we deduce  from  H\"older's  inequality and  that there exist constants $C_3,C_4>0$ such that for $\delta\in(0,1]$,
\begin{align*}
\E\big(\Lambda^\delta_t\big|\mathscr F_0\Big)&\le L_V\int_0^t\E\Big(\big(1+V(X^{(\delta)}_{s_\dd}+\aa(X^{(\delta)}_{s_\dd}-X^\dd_s ) )^\eta\big)\\
&\qquad\qquad\qquad\times\big(|b^{ (\delta) } (X^{(\delta)}_{s_\dd})|^2(s-s_\delta+\delta/2)+| W^\dd_s|^2/2\big)\big|\mathscr F_0\Big) \,\d s\\
&\le L_V(1+L_V^\eta) \int_0^t\E\Big(\big(1+V(X^{(\delta)}_{s_\dd})^\eta+V(X^{(\delta)}_s ) ^\eta\big)\big(  3(c^*)^2+3c_*^2\dd^{1-2\theta}|X^{(\delta)}_{s_\dd}|^2\\
&\qquad\qquad\quad\quad + | W^\dd_s|^2/2\big)\big|\mathscr F_0\Big)\,\d s\\
 &\le C_3\int_0^t  \E\Big(\big(\big(1+V(X^{(\delta)}_{s_\dd})^\eta+V(X^{(\delta)}_s  ) ^\eta\big)(1+|X^{(\delta)}_{s_\dd}|^2)\big)\big|\mathscr F_0\Big)\,\d s\\
 &\quad+C_3\int_0^t\E \big(V(X^{(\delta)}_{s_\dd})^\eta\big|\mathscr F_0\big)\,\d s+ C_3\int_0^t  \Big(\E\big(V(X^{(\delta)}_s )\big|\mathscr F_0\big)\Big)^{\eta}\Big(\E|W^\dd_s|^{\ff2{1-\eta}}\Big)^{1-\eta}\,\d s
 \\
&\le C_4\int_0^t \E\big(V(X^{(\delta)}_{s_\dd})+V(X^{(\delta)}_s)\big|\mathscr F_0\big)\,\d s+C_4t,
\end{align*}
where $W^\delta_s:=(W_s-W_{s_\delta})(s-s_\delta)^{-\frac{1}{2}}\sim N({\bf 0}, I_d)$ due to the scaling property of  $(W_t)_{t\ge0}$,  
 in the third inequality we used that $W_s-W_{s_\delta}$ is independent of $X^\dd_{s_\dd}$,and  the last inequality was valid  owing to the fact that for any integer $k\ge1,$
\begin{align*}
\E|W_{{s-s_\delta}}|^{2k}\le\frac{(2k)!((s-s_\delta)d)^k}{2^kk!},\quad s\ge0.
\end{align*}
Based on the previous estimates,
there exists a  constant  $ C_5>0$ such that
\begin{align*}
\E\big(V(X^{(\delta)}_t)\big|\mathscr F_0\big)
\le V(X^{(\delta)}_0)+C_5\int_0^t \E\big(V(X^{(\delta)}_{s_\dd})+V(X^{(\delta)}_s)\big|\mathscr F_0\big)\,\d s+C_5t.
\end{align*}
Subsequently, \eqref{W5} follows from Gronwall's inequality and  by noting that
\begin{align*}
\sup_{0\le s\le t}\E\big(V(X^{(\delta)}_s)\big|\mathscr F_0\big)
&\le V(X^{(\delta)}_0)+C_5t+2C_5\int_0^t\sup_{0\le u\le s} \E\big(V(X^{(\delta)}_s)\big|\mathscr F_0\big)\,\d s.
\end{align*}

Below, we fix $X_0^{(\delta)}=x\in\R^d.$ For any $n\ge1+|x|$, we define the stopping time:
\begin{align*}
\tau_n=\inf\big\{t\ge0: |X^{(\delta)}_t|\ge n\big\}.
\end{align*}
It is easy to see that $n\mapsto \tau_n$ is increasing so $\tau_\8:=\lim_{n\to\8}\tau_n$ is well defined. To show that $(X^{(\delta)}_t)_{t\ge0}$ does not explode almost surely, it is sufficient to verify $\P(\tau_\8=\8)=1$. To this end, we need   to prove that $\lim_{n\to\8}\P(\tau_n\le t)=0$
for arbitrary $t\ge0.$ Indeed, by means of  \eqref{W5}, we have  for all $t\ge0,$
\begin{align*}
\E V(X^{(\delta)}_{t\wedge\tau_n}) \le  (V(x)+c_0t )\e^{c_0t}.
\end{align*}
This obviously implies that for all $t\ge0,$
\begin{align*}
\inf_{|y|\ge n}V(y)\P(\tau_n\le t)\le  (V(x)+c_0t )\e^{c_0t}.
\end{align*}
Whence, for any $t\ge0$,   $\lim_{n\to\8}\P(\tau_n\le t)=0$  is available  by recalling that $V$ is a   compact  function.
\end{proof}

Next, we demonstrate that  the radial process $(| Z^{ \dd,\vv }_t|)_{t\ge0} $ satisfies an SDE, where it is extremely important that the corresponding quadratic process vanishes.

\begin{lemma}\label{lem1}
For any $\delta,\vv,t>0$,
\begin{equation}\label{Q2}
\d  |Z^{ \dd,\vv }_t|
 = \I_{\{|Z^{ \dd,\vv }_t|\neq0\}}  \<{\bf n}(Z^{ \dd,\vv }_t),b(  X^{\dd,\vv}_t)-b^{(\delta)}(X^{(\dd,\vv)}_{t_\dd}) \>   \,\d t +2 h_\vv(|Z^{ \dd,\vv }_t|) \I_{\{|Z^{ \dd,\vv }_t|\neq0\}}\<{\bf n}(Z^{ \dd,\vv }_t),\d \bar W_t\>.
\end{equation}
\end{lemma}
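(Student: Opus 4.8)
The plan is to apply It\^o's formula to the function $x\mapsto|x|$ composed with the coupled process $Z^{\dd,\vv}_t=X^{\dd,\vv}_t-X^{(\dd,\vv)}_t$, being careful about the lack of smoothness of the Euclidean norm at the origin. First I would write down the SDE satisfied by $Z^{\dd,\vv}_t$ by subtracting the two equations in \eqref{eq3}: the drift is $b(X^{\dd,\vv}_t)-b^{(\delta)}(X^{(\dd,\vv)}_{t_\dd})$, and the martingale part is
\begin{align*}
h_\vv(|Z^{\dd,\vv}_t|)\bigl(I_d-\Pi(Z^{\dd,\vv}_t)\bigr)\,\d\bar W_t
+\bigl(h^*_\vv(|Z^{\dd,\vv}_t|)-h^*_\vv(|Z^{(\dd,\vv)}_t|)\bigr)\,\d\tilde W_t,
\end{align*}
where one notes $Z^{\dd,\vv}_t$ and $Z^{(\dd,\vv)}_t$ denote the same process, so the $\tilde W$-term cancels identically and only the $\bar W$-term survives. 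Using $I_d-\Pi(z)=2\,{\bf n}(z){\bf n}(z)^\top\I_{\{|z|\neq0\}}$ from \eqref{W3}, the diffusion coefficient of $Z^{\dd,\vv}_t$ is $2h_\vv(|Z^{\dd,\vv}_t|)\,{\bf n}(Z^{\dd,\vv}_t){\bf n}(Z^{\dd,\vv}_t)^\top$ on $\{|Z^{\dd,\vv}_t|\neq0\}$.

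Next I would apply It\^o's formula on the set where $|Z^{\dd,\vv}_t|\neq0$, where $|\cdot|$ is smooth with $\nabla|z|={\bf n}(z)$ and $\nabla^2|z|=\frac1{|z|}(I_d-{\bf n}(z){\bf n}(z)^\top)$. The first-order term gives the stated drift $\I_{\{|Z^{\dd,\vv}_t|\neq0\}}\<{\bf n}(Z^{\dd,\vv}_t),b(X^{\dd,\vv}_t)-b^{(\delta)}(X^{(\dd,\vv)}_{t_\dd})\>\,\d t$ and the martingale term $2h_\vv(|Z^{\dd,\vv}_t|)\I_{\{|Z^{\dd,\vv}_t|\neq0\}}\<{\bf n}(Z^{\dd,\vv}_t),\d\bar W_t\>$. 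The crucial point, which I would emphasize, is that the It\^o correction term vanishes: the diffusion matrix $a(z):=4h_\vv(|z|)^2\,{\bf n}(z){\bf n}(z)^\top$ satisfies $a(z)\nabla^2|z|=\frac{4h_\vv(|z|)^2}{|z|}\,{\bf n}(z){\bf n}(z)^\top(I_d-{\bf n}(z){\bf n}(z)^\top)=0$ since ${\bf n}(z)^\top(I_d-{\bf n}(z){\bf n}(z)^\top)=0$; equivalently, the quadratic variation of $|Z^{\dd,\vv}_t|$ is $4h_\vv(|Z^{\dd,\vv}_t|)^2|{\bf n}(Z^{\dd,\vv}_t)|^2\,\d t=4h_\vv(|Z^{\dd,\vv}_t|)^2\,\d t$ but the second-derivative contraction against the radial direction is zero. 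This is exactly the feature advertised before the lemma statement (``the corresponding quadratic process vanishes'').

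The main obstacle is the behavior at the origin, i.e. making the It\^o-type formula rigorous across the (possible) hitting times of $0$ by $Z^{\dd,\vv}_t$. I would handle this by a standard localization/regularization argument: replace $|\cdot|$ by a smooth approximation $\phi_\kappa(z)=\sqrt{\kappa^2+|z|^2}$ (or use It\^o--Tanaka), apply It\^o to $\phi_\kappa(Z^{\dd,\vv}_t)$ where all terms are classical, and let $\kappa\downarrow0$. The key observation that kills the problematic local-time/singular contribution is that $h_\vv(r)\equiv0$ for $r\in[0,\vv]$, so on a neighborhood of $\{Z^{\dd,\vv}_t=0\}$ the process $Z^{\dd,\vv}_t$ evolves by the synchronous coupling with no diffusion in the $Z$-variable at all; hence $|Z^{\dd,\vv}_t|$ behaves like a process of bounded variation near $0$, the approximating correction terms converge to $0$, and no local time at $0$ is generated. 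Combining the limit with the indicator $\I_{\{|Z^{\dd,\vv}_t|\neq0\}}$ (which is automatically present since both the drift projection ${\bf n}(Z^{\dd,\vv}_t)$ and $h_\vv(|Z^{\dd,\vv}_t|)$ vanish when $|Z^{\dd,\vv}_t|\le\vv$) yields \eqref{Q2}.
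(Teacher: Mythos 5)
Your proposal is correct and follows essentially the same route as the paper: the authors also first derive the SDE for $|Z^{\dd,\vv}_t|^2$ (implicitly using the cancellation of the $\tilde W$ terms), then pass to the radial process through the smooth regularization $\phi_\kappa(|Z^{\dd,\vv}_t|^2)=\sqrt{\kappa+|Z^{\dd,\vv}_t|^2}$ and let $\kappa\downarrow 0$, with the key observation being exactly yours — that $h_\vv\equiv 0$ on $[0,\vv]$ forces the $\kappa$-correction term (respectively the second-order/local-time contribution) to vanish in the limit. Your geometric explanation of why the Itô correction is identically zero (the diffusion matrix $4h_\vv^2\,{\bf n}{\bf n}^\top$ annihilates the Hessian $\frac1{|z|}(I_d-{\bf n}{\bf n}^\top)$) is a nice conceptual reformulation of the same cancellation that the paper obtains via the algebraic identity $\phi_\kappa(r)^{-1}-\phi_\kappa(r)^{-3}r=\kappa\phi_\kappa(r)^{-3}$.
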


\begin{proof}
 By It\^o's formula, it follows that for any $t>0,$
\begin{align*}
\d  |Z^{ \dd,\vv }_t|^2&=\big(2\<Z^{ \dd,\vv }_t,b(  X^{\dd,\vv}_t)-b^{(\delta)}( X^{(\dd,\vv)}_{t_\dd}) \>+4h_\vv(|Z^{ \dd,\vv }_t |)^2\big)\,\d t+4h_\vv(|Z^{ \dd,\vv }_t|)\<Z^{ \dd,\vv }_t,\d \bar W_t\>.
\end{align*}	
In the sequel, for given $\kk>0,$ let $\phi_\kk(r)=(\kk+r)^{1/2},  r\ge0.$ It is easy to see that
$$\phi_\kk'(r)=\frac{1}{2}\phi_\kk(r)^{-1}\quad \mbox{ and } \quad \phi_\kk''(r)=-\frac{1}{4}\phi_\kk(r)^{-3},\quad r\ge0.$$
Subsequently, using $\phi_\kk(r)^{-1}-\phi_\kk(r)^{-3}r=\kk \phi_\kk(r)^{-3}, r\ge0$,
 and applying It\^o's formula yields that
\begin{equation*}
\begin{split}
\d \phi_\kk(|Z^{ \dd,\vv }_t|^2)&=\bigg(\frac{ \<Z^{ \dd,\vv }_t,b(  X^{\dd,\vv}_t)-b^{(\delta)}(  X^{(\dd,\vv)}_{t_\dd}) \> }{ \phi_\kk(|Z^{ \dd,\vv }_t|^2)}+\frac{ 2\kk h_\vv(|Z^{ \dd,\vv }_t|)^2}{\phi_\kk(|Z^{ \dd,\vv }_t|^2)^3}\bigg)\,\d t+\frac{2h_\vv(|Z^{ \dd,\vv }_t|)}{ \phi_\kk(|Z^{ \dd,\vv }_t|^2)}\<Z^{ \dd,\vv }_t,\d \bar W_t\>.
\end{split}
\end{equation*}
In terms of the definitions of $h_\vv$ and $\phi_\kk$, we have
\begin{align*}
\frac{\kk h_\vv(r)^2}{\phi_\kk(r^2)^3}=0 , \quad  r\le\vv,  \mbox{ and }  \quad \frac{\kk h_\vv(r)^2}{\phi_\kk(r^2)^3} \le \frac{\kk}{\vv^3},\quad   r\ge\vv.
\end{align*}
Thus, the dominated convergence theorem enables us to derive that
\begin{align}\label{Q4}
\lim_{\kk\to0}\int_0^t\frac{  \kk h_\vv(|Z^{ \dd,\vv }_s|)^2}{\phi_\kk(|Z^{ \dd,\vv }_s|^2)^3} \,\d s=0.
\end{align}
Additionally, thanks  to $\lim_{\kk\to0}h_\vv(|x|)x/\phi_\kk(|x|)=h_\vv(x){\bf n}(x)\I_{\{|x|\neq0\}}$, we obtain from \cite[Theorem 2.12, p.142]{RY}  that
\begin{align}\label{Q5}
\lim_{\kk\to0}\int_0^t\frac{ h_\vv(|Z^{ \dd,\vv }_s|)}{ \phi_\kk(|Z^{ \dd,\vv }_s|^2)}\<Z^{ \dd,\vv }_s,\d \bar W_s\>\overset{a.s.}=\int_0^t  h_\vv(|Z^{ \dd,\vv }_s|) \I_{\{|Z^{ \dd,\vv }_s|\neq0\}}\<{\bf n}(Z^{ \dd,\vv }_s),\d \bar W_s\>.
\end{align}	
At length, \eqref{Q2} follows by  combining \eqref{Q4} with \eqref{Q5} and
 leveraging    $x/\phi_\kk(|x|^2)\to {\bf n}(x)\I_{\{|x|\neq0\}}$ as $\kk\to0,$
\end{proof}

Before we proceed, we introduce some additional notation. For $\gamma>0 $ and
a
concave function   $g:[0,\8)\to[0,\8)$, define
\begin{align}\label{RR-}
\rho_{g,\gamma,V}(x,y) =g(|x-y|)(1+\gamma V(x)+\gamma V(y)),\quad  x,y\in\R^d,
\end{align}
where $V$ is the Lyapunov function stipulated in $({\bf H}_2).$ Additionally, we set
\begin{equation}\label{Q7}
\psi_\vv(r):=g'_{-}(r)  (K_1 r +K_2 r^\alpha  )  +2 g''(r)  h_\vv(r)^2, \quad r\ge0,
 \end{equation}
where $g'_{-}$ is the left-hand derivative  and $g''$ the second order derivative defined almost everywhere, and for $\gamma,t\ge0,$
\begin{equation}\label{RR}
 \begin{split}
R^{\delta,\vv,\gamma}_t:&=|b(  X^{(\dd,\vv)}_t)-b^{(\delta)}(  X^{(\dd,\vv)}_{t_\dd})|\\
&\quad\times\big((1+\gamma V( X^{\dd,\vv}_t)+\gamma V( X^{(\dd,\vv)}_t))   g'_{-}(|Z^{ \dd,\vv }_t|) +\gamma g(|Z^{ \dd,\vv }_t|) |\nn V( X^{(\dd,\vv)}_t)|\big).
\end{split}
 \end{equation}

\begin{lemma}\label{Lemma1}
Assume   that $({\bf H}_1)$ and $({\bf H}_2)$ hold. Then, for any $\delta,\vv,\gamma,t>0,$
\begin{equation}\label{Q6}
\begin{split}
\d \rho_{g,\gamma,V}( X^{\dd,\vv}_t, X^{(\dd,\vv)}_t)   &\le \d M^{\delta,\vv,\gamma}_t+\Big((1+\gamma V( X^{\dd,\vv}_t)+\gamma V( X^{(\dd,\vv)}_t)) \psi_\vv (|Z^{ \dd,\vv }_t|) \\
&\quad+\gamma g(|Z^{ \dd,\vv }_t|)\big(-\lambda_V V(X^{\dd,\vv}_t)-\lambda_V  | V(X^{(\dd,\vv)}_t) + 2C_V   \big)\Big)\,\d t\\
&\quad+2\gamma g'_{-}(|Z^{ \dd,\vv }_t|)  h_\vv(|Z^{ \dd,\vv }_t|)^2 \I_{\{|Z^{ \dd,\vv }_t|\neq0\}}\\
&\quad\quad\times\<{\bf n}(Z^{ \dd,\vv }_t), \nn V(X^{\dd,\vv}_t)+\Pi(Z^{ \dd,\vv }_t)\nn V(X^{(\dd,\vv)}_t)\>\,\d t+R^{\delta,\vv,\gamma}_t\,\d t,
\end{split}
\end{equation}
  where $\rho_{g,\gamma,V}$, $\psi_\vv$, and $R^{\delta,\vv,\gamma}$ are defined  in \eqref{RR-}, \eqref{Q7} and \eqref{RR}, respectively,  and
  $(M^{\delta,\vv,\gamma}_t)_{t\ge0}$ is a martingale.
\end{lemma}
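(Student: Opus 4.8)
The plan is to compute the It\^o differential of $\rho_{g,\gamma,V}(X^{\dd,\vv}_t,X^{(\dd,\vv)}_t)=g(|Z^{\dd,\vv}_t|)\bigl(1+\gamma V(X^{\dd,\vv}_t)+\gamma V(X^{(\dd,\vv)}_t)\bigr)$ by the product rule, treating $g(|Z^{\dd,\vv}_t|)$ and the Lyapunov factor $1+\gamma V(X^{\dd,\vv}_t)+\gamma V(X^{(\dd,\vv)}_t)$ as two factors. First I would apply It\^o's formula to $g(|Z^{\dd,\vv}_t|)$. By Lemma~\ref{lem1} the radial process $|Z^{\dd,\vv}_t|$ has no martingale part coming from the diffusion coefficient being degenerate --- rather, its quadratic variation is $4h_\vv(|Z^{\dd,\vv}_t|)^2\,\d t$ (the reflection matrix $\Pi$ makes the $\bar W$-increments of $X^{\dd,\vv}$ and $X^{(\dd,\vv)}$ exactly opposite on the sphere), so It\^o's formula gives
\begin{align*}
\d g(|Z^{\dd,\vv}_t|)&=\I_{\{|Z^{\dd,\vv}_t|\neq0\}}g'_{-}(|Z^{\dd,\vv}_t|)\<{\bf n}(Z^{\dd,\vv}_t),b(X^{\dd,\vv}_t)-b^{(\delta)}(X^{(\dd,\vv)}_{t_\dd})\>\,\d t\\
&\quad+2g''(|Z^{\dd,\vv}_t|)h_\vv(|Z^{\dd,\vv}_t|)^2\,\d t+2g'_{-}(|Z^{\dd,\vv}_t|)h_\vv(|Z^{\dd,\vv}_t|)\I_{\{|Z^{\dd,\vv}_t|\neq0\}}\<{\bf n}(Z^{\dd,\vv}_t),\d\bar W_t\>,
\end{align*}
where, to be careful about concavity, one should first work with a smooth approximation of $g$ (or use the Meyer--It\^o / Bouleau--Yor occupation-time argument) and then pass to the limit, the concavity $g''\le0$ ensuring the limiting drift is an inequality in the right direction; the hitting time at $0$ is handled exactly as in Lemma~\ref{lem1}. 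I would then split the drift term $g'_{-}\<{\bf n},b(X^{\dd,\vv})-b^{(\delta)}(X^{(\dd,\vv)}_{t_\dd})\>$ by inserting and subtracting $b_1(X^{(\dd,\vv)}_t)$ and $b_0(X^{(\dd,\vv)}_t)$: using $({\bf H}_1)$, $\<{\bf n}(Z),b_1(X^{\dd,\vv})-b_1(X^{(\dd,\vv)})\>\le K_1|Z|$ and $\<{\bf n}(Z),b_0(X^{\dd,\vv})-b_0(X^{(\dd,\vv)})\>\le K_2|Z|^\alpha$, so together with $2g''h_\vv^2$ this produces exactly $\psi_\vv(|Z^{\dd,\vv}_t|)$; the leftover $g'_{-}\<{\bf n},b(X^{(\dd,\vv)}_t)-b^{(\delta)}(X^{(\dd,\vv)}_{t_\dd})\>$ is absorbed into $R^{\delta,\vv,\gamma}_t$ (first summand) after bounding $|\<{\bf n},\cdot\>|\le|b(X^{(\dd,\vv)}_t)-b^{(\delta)}(X^{(\dd,\vv)}_{t_\dd})|$.

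Next I would differentiate the Lyapunov factor. For $V(X^{\dd,\vv}_t)$, the diffusion coefficient of $X^{\dd,\vv}$ is $h_\vv(|Z|)\bar W+h^*_\vv(|Z|)\tilde W$, whose quadratic variation is $I_d\,\d t$, so It\^o gives $\d V(X^{\dd,\vv}_t)=\mathscr L V(X^{\dd,\vv}_t)\,\d t+\<\nn V(X^{\dd,\vv}_t),h_\vv(|Z|)\,\d\bar W_t+h^*_\vv(|Z|)\,\d\tilde W_t\>$, and by $({\bf H}_2)$ the drift is $\le-\lambda_V V(X^{\dd,\vv}_t)+C_V$. For $V(X^{(\dd,\vv)}_t)$ the drift part of $X^{(\dd,\vv)}$ is $b^{(\delta)}(X^{(\dd,\vv)}_{t_\dd})$ rather than $b(X^{(\dd,\vv)}_t)$, so I write $\<\nn V(X^{(\dd,\vv)}_t),b^{(\delta)}(X^{(\dd,\vv)}_{t_\dd})\>=\<\nn V(X^{(\dd,\vv)}_t),b(X^{(\dd,\vv)}_t)\>+\<\nn V(X^{(\dd,\vv)}_t),b^{(\delta)}(X^{(\dd,\vv)}_{t_\dd})-b(X^{(\dd,\vv)}_t)\>$; the first term with $\tfrac12\Delta V$ is $\mathscr L V(X^{(\dd,\vv)}_t)\le-\lambda_V V(X^{(\dd,\vv)}_t)+C_V$, and the second term is bounded by $|b(X^{(\dd,\vv)}_t)-b^{(\delta)}(X^{(\dd,\vv)}_{t_\dd})|\,|\nn V(X^{(\dd,\vv)}_t)|$, which (after multiplying by the $g(|Z|)$ factor from the product rule) gives precisely the second summand of $R^{\delta,\vv,\gamma}_t$. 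The diffusion part of $X^{(\dd,\vv)}$ is $h_\vv(|Z|)\Pi(Z)\,\d\bar W+h^*_\vv(|Z|)\,\d\tilde W$.

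Finally I would assemble the cross term from the product rule: $\d\langle g(|Z^{\dd,\vv}_\cdot|),1+\gamma V(X^{\dd,\vv}_\cdot)+\gamma V(X^{(\dd,\vv)}_\cdot)\rangle_t$. The martingale part of $g(|Z|)$ is driven only by $\bar W$ with coefficient $2g'_{-}(|Z|)h_\vv(|Z|){\bf n}(Z)$; the martingale part of $\gamma V(X^{\dd,\vv})$ has $\bar W$-coefficient $\gamma h_\vv(|Z|)\nn V(X^{\dd,\vv})$ and the martingale part of $\gamma V(X^{(\dd,\vv)})$ has $\bar W$-coefficient $\gamma h_\vv(|Z|)\Pi(Z)\nn V(X^{(\dd,\vv)})$ (the $\tilde W$-parts don't contribute to the cross bracket with $g(|Z|)$). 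Hence the bracket equals $2\gamma g'_{-}(|Z^{\dd,\vv}_t|)h_\vv(|Z^{\dd,\vv}_t|)^2\I_{\{|Z^{\dd,\vv}_t|\neq0\}}\<{\bf n}(Z^{\dd,\vv}_t),\nn V(X^{\dd,\vv}_t)+\Pi(Z^{\dd,\vv}_t)\nn V(X^{(\dd,\vv)}_t)\>\,\d t$, which is the third line of \eqref{Q6}. Collecting the four It\^o martingale integrals into $M^{\delta,\vv,\gamma}_t$ (local martingale; genuine martingale by the moment bound in Lemma~\ref{lem0} together with $({\bf H}_3)$--$({\bf H}_4)$ controlling $|\nn V|$ and the linear growth of $b^{(\delta)}$, or by a standard localization argument) completes the derivation. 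The main obstacle I anticipate is the non-smoothness of $g$ and the behaviour at $|Z|=0$: one has to justify the It\^o--Tanaka step for the concave function $g$ applied to the radial process, making sure the local-time contribution at $0$ drops out (as in Lemma~\ref{lem1}) and that the concavity turns the second-derivative contribution into the correct inequality; once this is in place the rest is bookkeeping of drift and cross-variation terms.
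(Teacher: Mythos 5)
Your proposal is correct and follows essentially the same route as the paper: you apply Itô–Tanaka to $g(|Z^{\dd,\vv}_t|)$ via the radial SDE of Lemma~\ref{lem1} (using concavity to bound the local-time/second-derivative contribution), split the drift by inserting $b_1(X^{(\dd,\vv)}_t)$ and $b_0(X^{(\dd,\vv)}_t)$ to produce $\psi_\vv$ plus the first $R$-summand, apply Itô and $({\bf H}_2)$ to the Lyapunov factor with the correction term giving the second $R$-summand, and read off the cross-variation through $\bar W$. The only cosmetic difference is that you additionally sketch why $M^{\delta,\vv,\gamma}$ is a genuine (not merely local) martingale, a point the paper leaves implicit.
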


\begin{proof}
Since  $g:[0,\8)\to[0,\8)$ is concave,   we obtain from Lemma \ref{lem1} and It\^o-Tanaka's formula (see e.g. \cite[Theorem 3.7.1]{KSh}) that
\begin{align*}
 g(|Z^{ \dd,\vv }_t|)&=g(|Z^{\dd,\vv}_0|)+\int_0^tg'_{-}(|Z^{ \dd,\vv }_s|)\I_{\{|Z^{ \dd,\vv }_s|\neq0\}}  \<{\bf n}(Z^{ \dd,\vv }_s),b( X^{\dd,\vv}_s)-b^{(\delta)}(X^{(\dd,\vv)}_{s_\dd}) \>   \,\d s\\
 &\quad+\frac{1}{2}\int_{-\8}^\8\Lambda^{\delta,\vv}(t,a)\mu_g(\d a) +2\int_0^tg'_{-}(|Z^{ \dd,\vv }_s|)  h_\vv(|Z^{ \dd,\vv }_s|) \I_{\{|Z^{ \dd,\vv }_s|\neq0\}}\<{\bf n}(Z^{ \dd,\vv }_s),\d \bar W_s\>,
\end{align*}
where $\mu_g$ means the second derivative measure (see e.g. \cite[(6.47)]{KSh}), and $\Lambda^{\delta,\vv}(t,a)$ is the local time of $(|Z^{ \dd,\vv }_t|)_{t\ge0}$ at the point $a.$ Thereafter, the occupation time formula  and the fact that $\mu_g(\d a)\le g''(a)\d a$ imply that
\begin{align*}
\int_{-\8}^\8\Lambda^{\delta,\vv}(t,a)\mu_g(\d a)\le 4\int_0^tg''(|Z^{ \dd,\vv }_s|)  h_\vv(|Z^{ \dd,\vv }_s|)^2 \I_{\{|Z^{ \dd,\vv }_s|\neq0\}}\,\d s.
\end{align*}
As a result,
 we derive from $g'_{-}\ge0 $ that
\begin{equation}\label{U7}
\begin{split}
 \d  g(|Z^{ \dd,\vv }_t|)&\le  g'_{-}(|Z^{ \dd,\vv }_t|)\I_{\{|Z^{ \dd,\vv }_t|\neq0\}}  \<{\bf n}(Z^{ \dd,\vv }_t),b( X^{\dd,\vv}_t)-b^{(\delta)}(X^{(\dd,\vv)}_{t_\dd}) \>   \,\d t\\
 &\quad+2 g''(|Z^{ \dd,\vv }_t|)  h_\vv(|Z^{ \dd,\vv }_t|)^2 \,\d t\\
 &\quad+2 g'_{-}(|Z^{ \dd,\vv }_t|)  h_\vv(|Z^{ \dd,\vv }_t|) \I_{\{|Z^{ \dd,\vv }_t|\neq0\}}\<{\bf n}(Z^{ \dd,\vv }_t),\d \bar W_t\>\\
 &\le\big(\psi_\vv (|Z^{ \dd,\vv }_t|) + g'_{-}(|Z^{ \dd,\vv }_t|)|b(  X^{(\delta,\vv)}_t)-b^{(\delta)}(X^{(\dd,\vv)}_{t_\dd})| \big)\,\d t\\
  &\quad+2 g'_{-}(|Z^{ \dd,\vv }_t|)  h_\vv(|Z^{ \dd,\vv }_t|) \I_{\{|Z^{ \dd,\vv }_t|\neq0\}}\<{\bf n}(Z^{ \dd,\vv }_t),\d \bar W_t\>,
\end{split}
\end{equation}
where $\psi_\vv$ is defined in \eqref{Q7}.
Next, applying It\^o's formula and making use of $({\bf H}_2)$  gives that
\begin{align*}
&\d \big(V(X^{\dd,\vv}_t)+V(X^{(\delta,\vv)}_t)\big)\\&=\big((\mathscr LV)(X^{\dd,\vv}_t)+(\mathscr LV)( X^{(\delta,\vv)}_t)\big)\,\d t +\< \nn V(X^{(\delta,\vv)}_t) ,  b^{(\delta)}(X^{(\dd,\vv)}_{t_\dd})-b(  X^{(\delta,\vv)}_t)\>\,\d t\\
&\quad+h_\vv(|Z^{ \dd,\vv }_t|)\<\nn V(X^{\dd,\vv}_t)+\Pi(Z^{ \dd,\vv }_t)\nn V(X^{(\dd,\vv)}_t), \d  \bar W_t \>\\
&\quad+h^*_\vv(|Z^{ \dd,\vv }_t|)\<\nn V(X^{\dd,\vv}_t)+\nn V( X^{(\dd,\vv)}_t),  \d  \tilde W_t\>\\
&\le  \big(-\lambda_V(V(X^{\dd,\vv}_t)+V( X^{(\delta,\vv)}_t))+ 2C_V\big)\,\d t +|\nn V( X^{(\delta,\vv)}_t )|\cdot |b^{(\delta)}(X^{(\dd,\vv)}_{t_\dd})-b( X^{(\delta,\vv)}_t)|\,\d t\\
&\quad+h_\vv(|Z^{ \dd,\vv }_t|)\<\nn V(X^{\dd,\vv}_t)+\Pi(Z^{ \dd,\vv }_t)\nn V(X^{(\dd,\vv)}_t), \d  \bar W_t \>\\
&\quad+h^*_\vv(|Z^{ \dd,\vv }_t|)\<\nn V( X^{\dd,\vv}_t)+\nn V(X^{(\dd,\vv)}_t),  \d  \tilde W_t\>.
\end{align*}
Finally, \eqref{Q6} is achievable by noting from the chain rule that
\begin{align*}
\d \rho_{g,\gamma,V}( X^{\dd,\vv}_t, X^{(\delta,\vv)}_t)&=(1+\gamma V(  X^{\dd,\vv}_t)+\gamma V( X^{(\delta,\vv)}_t))\d  g(|Z^{ \dd,\vv }_t|)\\
&\quad +\gamma g(|Z^{ \dd,\vv }_t|)\d \big(V(X^{\dd,\vv}_t)+V(X^{(\delta,\vv)}_t)\big)\\
&\quad+\gamma \d[g(|Z^{ \dd,\vv }_\cdot|), V(X^{\dd,\vv}_\cdot)+V(X^{(\delta,\vv)}_\cdot)]_t,
\end{align*}
where $[\cdot,\cdot]_t$ denotes the quadratic variation process, and
\begin{align*}
  \d[g(|Z^{ \dd,\vv }_\cdot|),V(X^{\dd,\vv}_\cdot)+V(X^{(\delta,\vv)}_\cdot)]_t &=2 g'_{-}(|Z^{ \dd,\vv }_t|)  h_\vv(|Z^{ \dd,\vv }_t|)^2 \I_{\{|Z^{ \dd,\vv }_t|\neq 0\}}\\
&\quad\times \<{\bf n}(Z^{ \dd,\vv }_t), \nn V(X^{\dd,\vv}_t)+\Pi(Z^{ \dd,\vv }_t)\nn V( X^{(\dd,\vv)}_t)\>
\end{align*}
by taking advantage of the independence between  $(\bar W_t)_{t\ge0}$ and $(\tilde W_t)_{t\ge0}$.
\end{proof}

 To derive the desired assertion \eqref{Q1}, besides the construction of the coupling given in \eqref{eq3}, another essential is to choose a suitable quasi-distance, which is comparable to $\rho_{V}$ defined in \eqref{R1}. To this end, we need to introduce some additional quantities. For the Lyapunov function $V$ and parameters $C_V,\lambda_V>0$
given in (${\bf H}_2$), we define the set
\begin{align*}
\mathcal D_V=\big\{(x,y)\in\R^d\times\R^d: V(x)+V(y)\le  4C_V/{\ll_V} \big\}.
\end{align*}
By recalling that $V$ is a compact  function,  $\mathcal D_V$ is a bounded set  so the quantity
$$l_0:=1+\sup_{(x,y)\in\mathcal D_V}|x-y|$$
is finite. In the following analysis,  we fix the following quantities:
\begin{equation}\label{Q10}
\begin{split}
c_1&=c_2\e^{-c_2 l_0},\quad c_2= 2(K_1l_0+K_2 l_0^{\alpha}),\quad c_3:=  c_2^2\e^{ -c_2l_0}/{l_0},\\
\gamma&=1\wedge \frac{c_3}{4C_V(c_1+c_2)}\wedge\bigg(\frac{ c_3}{24L_V(1+L_V^\eta)(c_1+c_2)(1+c_2/{c_1})}\bigg)^{\frac{1}{1-\eta}},
\end{split}
\end{equation}
 and choose
\begin{align}\label{Q8}
g(r) =f(r\wedge l_0),\quad  \mbox{ with }	f(r):=c_1 r+1-\e^{-c_2 r},\quad r\ge 0.
\end{align}

In the sequel, we seek to estimate the terms involved in the right hand side of \eqref{Q6}, separately.

\begin{lemma}\label{lem2}
For all $x,y\in\R^d,$
\begin{equation}\label{W1}
\begin{split}
\Gamma_1^{\vv}(x,y):&=(1+\gamma V(x)+\gamma V(y)) \psi_\vv (|x-y|)  +\gamma g(|x-y|)\big(-\lambda_V V(x)-\lambda_V  V( y) + 2C_V   \big)\\
&\le -\Big(\frac{c_3}{2(c_1+c_2)}\I_{\{|x-y|\le l_0\}} +\ff{\gg \ll_V}{  1+2\gg }\I_{\{|x-y|>l_0\}}\Big)\rho_{g,\gg,V}(x,y)\\
&\quad+2 \big( (c_3 + 2c_2  K_1)\vv +2K_2\vv^\alpha  \big)(1+\gamma V(x)+\gamma V (y)),
\end{split}
\end{equation}
where the parameters $c_1,c_2,c_3,\gamma >0$ are given  in \eqref{Q10}.
\end{lemma}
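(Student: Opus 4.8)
The plan is to establish the pointwise inequality \eqref{W1} by splitting the argument according to whether $r:=|x-y|\le l_0$ or $r>l_0$, and by tracking carefully how the truncation in $g(r)=f(r\wedge l_0)$ interacts with the derivatives $g'_-$ and $g''$. Since $g$ is affine (constant) for $r>l_0$, there $g''=0$ and $g'_-=0$, so $\psi_\vv(r)=0$ for $r>l_0$; this handles the second-derivative term automatically and leaves only the Lyapunov contribution $\gamma g(r)\bigl(-\lambda_V V(x)-\lambda_V V(y)+2C_V\bigr)$ to control. For $r\le l_0$ one has $g=f$, so $g'_-(r)=f'(r)=c_1+c_2\e^{-c_2 r}$ and $g''(r)=f''(r)=-c_2^2\e^{-c_2 r}$, both bounded above and below by explicit constants on $[0,l_0]$: in particular $f'(r)\in[c_1,c_1+c_2]$ and $-2f''(r)=2c_2^2\e^{-c_2 r}\ge 2c_2^2\e^{-c_2 l_0}=2c_3 l_0$.

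First I would treat the regime $r>l_0$. Here $\psi_\vv(r)=0$, so $\Gamma_1^\vv(x,y)=\gamma g(l_0)\bigl(-\lambda_V(V(x)+V(y))+2C_V\bigr)$. I split on whether $(x,y)\in\mathcal D_V$. If $V(x)+V(y)>4C_V/\lambda_V$, then $-\lambda_V(V(x)+V(y))+2C_V\le -\tfrac12\lambda_V(V(x)+V(y))$, and since $1+\gamma V(x)+\gamma V(y)\le 1+2\gamma\cdot\tfrac12(V(x)+V(y))\le\tfrac{1+2\gamma}{2\gamma}\cdot 2\gamma\cdot\tfrac12(V(x)+V(y))$ one can absorb the factor $(1+\gamma V(x)+\gamma V(y))$ at the cost of $\tfrac{\gamma\lambda_V}{1+2\gamma}$, giving the claimed coefficient $-\tfrac{\gamma\lambda_V}{1+2\gamma}$ times $\rho_{g,\gamma,V}(x,y)=g(l_0)(1+\gamma V(x)+\gamma V(y))$. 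But $r>l_0$ forces $(x,y)\notin\mathcal D_V$ by definition of $l_0$ (since $l_0\ge 1+|x-y|$ whenever $(x,y)\in\mathcal D_V$, contradiction), so this case is the only one in the regime $r>l_0$ — the remaining contribution from $2C_V$ when $(x,y)\in\mathcal D_V$ does not arise here, and the extra additive $\vv$-term on the right of \eqref{W1} is only needed to cover the near-diagonal regime below.

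Next I would treat $r\le l_0$, where $g=f$. Write $\psi_\vv(r)=f'(r)(K_1 r+K_2 r^\alpha)+2f''(r)h_\vv(r)^2$. Using $h_\vv(r)\equiv 0$ for $r\le\vv$ and $h_\vv(r)\le 1$ always, together with $-2f''(r)\ge 2c_3 l_0$ valid on $[0,l_0]$: for $r\ge 2\vv$ (so $h_\vv(r)=1$) we get $2f''(r)h_\vv(r)^2\le -2c_3 l_0\le -2c_3 r/1$ — actually I need a bound linear in $f(r)$, so I bound $f(r)\le f'(0)r=(c_1+c_2)r$ hence $r\ge f(r)/(c_1+c_2)$, giving $2f''(r)h_\vv(r)^2\le -\tfrac{2c_3 l_0}{c_1+c_2}f(r)$ when $r\ge 2\vv$; meanwhile $f'(r)(K_1 r+K_2 r^\alpha)\le (c_1+c_2)(K_1+K_2)\cdot(\dots)$ should be absorbed into this negative term up to a small error. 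The cleaner route, which I expect the authors follow, is: on $r\ge 2\vv$, $\psi_\vv(r)\le -\tfrac{c_3}{c_1+c_2}f(r) + \text{(lower order, absorbed)}$; on $r<2\vv$, $h_\vv$ may fail to be $1$ but then $r$ is small so $\psi_\vv(r)\le f'(r)(K_1 r+K_2 r^\alpha)\le (c_1+c_2)(2K_1\vv + K_2(2\vv)^\alpha)$ roughly, contributing the additive error $2((c_3+2c_2K_1)\vv+2K_2\vv^\alpha)$ after multiplying by $(1+\gamma V(x)+\gamma V(y))$; and the Lyapunov term $\gamma f(r)(-\lambda_V V(x)-\lambda_V V(y)+2C_V)\le 2C_V\gamma f(r)$ is controlled using $2C_V\gamma\le \tfrac{c_3}{2(c_1+c_2)}\cdot\tfrac{1}{2}$ from the choice of $\gamma$ in \eqref{Q10} (the bound $\gamma\le c_3/(4C_V(c_1+c_2))$), so that half of the negative quadratic-decay term from $\psi_\vv$ swallows it while the other half yields the stated coefficient $-\tfrac{c_3}{2(c_1+c_2)}$ against $\rho_{g,\gamma,V}(x,y)$.

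The main obstacle I anticipate is the bookkeeping in the $r\le l_0$ regime: one must simultaneously (i) extract a genuine $-c\,\rho_{g,\gamma,V}(x,y)$ term, which requires the negative part $2f''(r)h_\vv(r)^2$ to dominate both the positive $\psi_\vv$ contribution $f'(r)(K_1r+K_2 r^\alpha)$ and the positive Lyapunov leftover $2C_V\gamma f(r)$, and this forces the precise calibration $c_3=c_2^2\e^{-c_2 l_0}/l_0$ together with the smallness condition on $\gamma$; and (ii) isolate the $\vv$-dependent error cleanly, which requires distinguishing $r<2\vv$ (where $h_\vv^2$ may vanish, killing the helpful negative term, but where all positive terms are $O(\vv)$ or $O(\vv^\alpha)$) from $r\ge 2\vv$ (where $h_\vv=1$). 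I would also double-check the edge behaviour at $r=l_0$ to ensure $g'_-(l_0)=f'(l_0)$ is used consistently and that the $\I_{\{|x-y|\le l_0\}}/\I_{\{|x-y|>l_0\}}$ split in the final bound matches the two regimes exactly. Everything else — concavity of $g$, the explicit formulas for $f',f''$, and the elementary inequality $f(r)\le (c_1+c_2)r$ — is routine.
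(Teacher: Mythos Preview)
Your proposal is essentially correct and follows the paper's own strategy: the same $r\le l_0$/$r>l_0$ split, the same use of $(x,y)\notin\mathcal D_V$ when $r>l_0$ to get the coefficient $\gamma\lambda_V/(1+2\gamma)$, and the same absorption of the Lyapunov remainder $2C_V\gamma f(r)$ via the constraint $\gamma\le c_3/(4C_V(c_1+c_2))$. The only refinement is that the paper avoids your hard $r<2\vv$ versus $r\ge 2\vv$ split by writing $\psi_\vv(r)=h_\vv(r)^2\phi(r)+(1-h_\vv(r)^2)f'(r)(K_1r+K_2r^\alpha)$ with $\phi(r):=f'(r)(K_1r+K_2r^\alpha)+2f''(r)\le -c_3 r$ on $[0,l_0]$, so the $\vv$-error arises directly from the factor $(1-h_\vv^2)$ being supported on $[0,2\vv]$ and no re-insertion of the negative term on $\{r<2\vv\}$ is needed (incidentally, your intermediate bound $2f''(r)h_\vv^2\le -\tfrac{2c_3 l_0}{c_1+c_2}f(r)$ carries a stray $l_0$; the correct constant is $\tfrac{2c_3}{c_1+c_2}$).
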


\begin{proof}
The   estimate \eqref{W1} is  derived on account of the cases (i) $x,y\in\R^d$ with $|x-y|\le \ell_0 $ and (ii) $x,y\in\R^d$ with $|x-y|> \ell_0.$

For the case (i),  $\Gamma_1^\vv $ can be reformulated obviously  as below:
\begin{align*}
\Gamma_1^\vv(x,y)&=(1+\gamma V(x)+\gamma V (y))\\
  &\quad\quad\times\big(h_\vv(|x-y|)^2\phi(|x-y|)+(1-h_\vv(|x-y|)^2)f'(|x-y|)(K_1|x-y|+ K_2 |x-y|^{\alpha})\big)\\
&\quad  +\gamma f(|x-y|)\big(-\lambda V(x)-\lambda  V( y) + 2C_V   \big),
\end{align*}	
where
\begin{align*}
\phi(r):=f'(r)(K_1r+ K_2 r^{\alpha})+2  f''(r),\quad r\ge0.
\end{align*}	
By virtue of
\begin{align*}
f'(r)=c_1+c_2\e^{-c_2r}\quad \mbox{ and } \quad f''(r)=-c_2^2\e^{-c_2r},\quad r\ge0,
\end{align*}
it follows that for any $r\in[0,l_0]$,
\begin{equation}\label{Q9}
		\begin{split}	 \phi(r)
		=(c_2\e^{-c_2 r}+c_1)(K_1r+ K_2 r^{\alpha})-2 c_2^2 \e^{-c_2 r}
			  \le 2c_2(K_1l_0+ K_2 l_0^{\alpha}-c_2  )\e^{-c_2 r} \le -c_3  r
	\end{split}
	\end{equation}
where in the first inequality we used $c_1=c_2\e^{-c_2l_0}$, and in the second inequality   we utilized $\frac{1}{2}c_2=K_1l_0+ K_2 l_0^{\alpha}$.
Thus, \eqref{Q9}, in addition to $f'(r)\le 2c_2, r\ge0,$ implies that for any $x,y\in\R^d$ with $|x- y|\le l_0,$
\begin{align*}
\Gamma_1^\vv(x,y)&\le-\big(c_3|x-y| -2\gamma  C_Vf(|x-y|)\big)(1+\gamma V(x)+\gamma V (y)) \\
&\quad+(1-h_\vv(|x-y|)^2)(1+\gamma V(x)+\gamma V (y))\big(c_3|x-y| +2c_2(K_1|x-y|+ K_2 |x-y|^{\alpha})\big)\\
&\le-\frac{c_3}{2(c_1+c_2)} \rho_{f,\gamma,V}(x,y)+\big(2c_3\vv+2c_2(2K_1\vv+K_2(2\vv)^\alpha)\big)(1+\gamma V(x)+\gamma V (y)),
\end{align*}	
where in the second inequality we exploited the fact that $f(r)\le (c_1+c_2)r,r\ge0,$  took $\gamma>0$ such that $\frac{c_3}{2(c_1+c_2)}=2\gamma C_V$, as well as made use of that
\begin{align*}
(1-h_\vv(r)^2) \big(c_3r +2c_2(K_1r+ K_2 r^{\alpha})\big)\le 2c_3\vv+2c_2(2K_1\vv+K_2(2\vv)^\alpha),\quad r\ge0.
\end{align*}

With regard to the case (ii), $g'_{-}(|x-y|)=g''(|x-y|)=0$ and $(x,y)\notin \mathcal D_V$
(so $V(x)+V(y)>4C_V/{\lambda_V}$)
so that
\begin{align*}
	\Gamma_1^{\vv}(x,y)\le-\frac{1}{2}\lambda_V\gamma g(|x-y|) (   V(x)+  V(y)   ) &\le-\ff{\gg\ll_V(V(x)+V(y))}{2\big(1+\gamma(V(x)+V(y))\big)}\rho_{g,\gg,V}(x,y)\\
		&\le -\ff{\gg \ll_V}{  1+2\gg }\rho_{g,\gg,V}(x,y),
	\end{align*}
in which  in the last display we employed the prerequisite $V\ge1$ and the fact that $[0,\8)\ni r\mapsto r/(1+r)$ is increasing.

At last, the assertion \eqref{W1} is available by summarizing the aforementioned estimates.
\end{proof}

\begin{lemma}\label{lem3}
For any $x,y\in\R^d$,
\begin{equation}\label{W2}
\begin{split}
\Gamma_2^{\vv}(x,y):&=2\gamma g'_{-}(|x-y|)  h_\vv(|x-y|)^2 \I_{\{|x-y|\neq0\}}\<{\bf n}(x-y), \nn V(x)+\Pi(x-y)  \nn V(y)\>\\
&\le\frac{c_3}{4(c_1+c_2)}\rho_{g,\gg,V}(x,y)\I_{\{|x-y|\le l_0\}}.
\end{split}
\end{equation}

\end{lemma}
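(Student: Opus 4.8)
The plan is to first discard the boundary cases and then exploit the specific algebraic structure of the reflection matrix $\Pi$. If $|x-y|>l_0$, then $g'_{-}(|x-y|)=0$ by the definition of $g$ in \eqref{Q8}, whence $\Gamma_2^\vv(x,y)=0$; if $x=y$, the factor $\I_{\{|x-y|\neq0\}}$ kills $\Gamma_2^\vv(x,y)$, while $\rho_{g,\gg,V}(x,x)=f(0)(1+2\gg V(x))=0$. (The range $0<|x-y|<\vv$ is also trivial because $h_\vv$ vanishes there, but it gets absorbed by the generic bound below.) So it remains to handle $0<|x-y|\le l_0$, on which $g=f$ and $g'_{-}=f'$.

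Write $z=x-y$ and recall $\Pi(z)=I_d-2{\bf n}(z){\bf n}(z)^\top$ with $|{\bf n}(z)|=1$. A one-line computation gives $\<{\bf n}(z),\Pi(z)\nn V(y)\>=\<{\bf n}(z),\nn V(y)\>-2\<{\bf n}(z),\nn V(y)\>=-\<{\bf n}(z),\nn V(y)\>$, so the bracket collapses to a gradient increment:
\[
\<{\bf n}(z),\nn V(x)+\Pi(z)\nn V(y)\>=\<{\bf n}(z),\nn V(x)-\nn V(y)\>\le|\nn V(x)-\nn V(y)|
\]
by Cauchy--Schwarz. I would then write $\nn V(x)-\nn V(y)=\int_0^1\nn^2V(y+u(x-y))(x-y)\,\d u$, dominate the operator norm by $\|\cdot\|_{\rm HS}$, and apply \eqref{EW-} followed by \eqref{JJ} to get
\[
|\nn V(x)-\nn V(y)|\le L_V|x-y|\int_0^1\big(1+V(y+u(x-y))^\eta\big)\,\d u\le L_V(1+L_V^\eta)\,|x-y|\,(1+V(x)+V(y))^\eta,
\]
where $1+L_V^\eta(1+V(x)+V(y))^\eta\le(1+L_V^\eta)(1+V(x)+V(y))^\eta$ since $V\ge1$.

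To conclude, I would feed this into $\Gamma_2^\vv$ together with the elementary facts $g'_{-}(r)=c_1+c_2\e^{-c_2r}\le c_1+c_2$, $h_\vv(r)^2\le1$, $|x-y|\le g(|x-y|)/c_1$ on $[0,l_0]$ (because $f(r)\ge c_1r$), $1+V(x)+V(y)\le\gg^{-1}(1+\gg V(x)+\gg V(y))$ (valid as $\gg\le1$), and $(1+\gg V(x)+\gg V(y))^\eta\le 1+\gg V(x)+\gg V(y)$ (the base being $\ge1$), to arrive at
\begin{align*}
\Gamma_2^\vv(x,y)&\le 2\gg(c_1+c_2)L_V(1+L_V^\eta)\,|x-y|\,(1+V(x)+V(y))^\eta\\
&\le 2\gg^{1-\eta}\,\frac{c_1+c_2}{c_1}\,L_V(1+L_V^\eta)\,g(|x-y|)\,(1+\gg V(x)+\gg V(y))\\
&=2\gg^{1-\eta}(1+c_2/c_1)L_V(1+L_V^\eta)\,\rho_{g,\gg,V}(x,y).
\end{align*}
Since $\gg$ is chosen in \eqref{Q10} so that $\gg^{1-\eta}\le\frac{c_3}{24L_V(1+L_V^\eta)(c_1+c_2)(1+c_2/c_1)}$, the prefactor is at most $\frac{c_3}{12(c_1+c_2)}\le\frac{c_3}{4(c_1+c_2)}$, which yields \eqref{W2}. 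The only subtle point is this last chain: one must route the $\eta$-th power of $V$ through $\gg^{-\eta}$ rather than naively estimating $(1+V(x)+V(y))^\eta$ by $1+V(x)+V(y)$ — it is exactly the residual factor $\gg^{1-\eta}$ that the definition of $\gg$ in \eqref{Q10} is engineered to absorb, so there is no genuine obstacle here, merely careful bookkeeping.
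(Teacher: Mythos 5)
Your proof is correct and follows essentially the same route as the paper's: use the reflection identity $\Pi(z){\bf n}(z)=-{\bf n}(z)$ to reduce the bracket to $\<{\bf n}(z),\nn V(x)-\nn V(y)\>$, bound the gradient increment by the mean-value estimate on $\nn^2 V$ together with \eqref{EW-} and \eqref{JJ}, and finally extract a factor $\gamma^{1-\eta}$ that the choice of $\gamma$ in \eqref{Q10} is engineered to absorb. The only deviation is a small algebraic one: where the paper splits $(1+V(x)+V(y))^\eta$ by subadditivity into $1+V(x)^\eta+V(y)^\eta$ and then applies Young's inequality $a^{1-\eta}b^\eta\le(1-\eta)a+\eta b$ to each term, you instead write $1+V(x)+V(y)\le\gamma^{-1}(1+\gamma V(x)+\gamma V(y))$ (using $\gamma\le1$) and apply $r^\eta\le r$ for $r\ge1$ — a touch cleaner, and it yields the sharper constant $\frac{c_3}{12(c_1+c_2)}\le\frac{c_3}{4(c_1+c_2)}$.
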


\begin{proof}
Apparently,  for any $x,y\in\R^d$ with $|x-y|>l_0$, the assertion \eqref{W2} holds true by noting that $g'_{-}(|x-y|) =0 $.
So, it remains to  show that \eqref{W2} is still valid for any $x,y\in\R^d$ with $|x-y|\le l_0.$ Next, from  the definition of $\Pi$ given in \eqref{W3},
we have
\begin{align}\label{U9}
 \Pi(z){\bf n}(z)=-{\bf n}(z),\quad {\bf 0}\neq z\in\R^d.
\end{align}
This, along with $g'_{-}(|x-y|) \le c_1+c_2$ and $h_\vv\le1$, leads to
\begin{equation}\label{JJ-}
\begin{split}
\Gamma_2^{\vv}(x,y)&=2\gamma g'_{-}(|x-y|)  h_\vv(|x-y|)^2 \I_{\{|x-y|\neq0\}}\<{\bf n}(x-y), \nn V(x)- \nn V(y)\>\\
&\le 2\gamma(c_1+c_2)|\nn V(x)- \nn V(y)|\\
&\le 2L_V(1+L_V^\eta)\gamma(c_1+c_2)\big(1+V(x)^\eta+V(y)^\eta\big)|x-y|\\
&= 2L_V(1+L_V^\eta)\gamma(c_1+c_2)\big(1+(\eta^{\frac{1}{1-\eta}}\gamma^{-1})^{\eta(1-\eta)}(\gamma^{1-\eta}V(x)/\eta)^\eta\\
&\qquad\qquad\qquad\quad\quad+(\eta^{\frac{1}{1-\eta}}\gamma^{-1})^{\eta(1-\eta)}(\gamma^{1-\eta}V(y)/\eta)^\eta\big)|x-y|\\
&\le2L_V(1+L_V^\eta)\gamma(c_1+c_2)\\
&\quad\times\big(1+2(1-\eta)(\eta^{\frac{1}{1-\eta}}\gamma^{-1})^{\eta }+ \gamma^{1-\eta}V(x) + \gamma^{1-\eta}V(y) \big)|x-y|\\
&\le3L_V(1+L_V^\eta)\gamma^{1-\eta}(c_1+c_2)\big(2   + \gamma V(x) + \gamma V(y) \big)|x-y|,
\end{split}
\end{equation}
where in the second inequality we took advantage of (${\bf H}_3$), the third inequality is achievable by using Young's inequality, and the last inequality is valid owing to $\eta\in[0,1).$ Subsequently, \eqref{W2} follows by noticing  from  $r\le f(r)/{c_1},r\ge0,$ that
\begin{align*}
\Gamma_2^{\vv}(x,y)
  \le6L_V(1+L_V^\eta)\gamma^{1-\eta}(1+c_2/{c_1})\rho_{f,\gg,V}(x,y) \le \frac{c_3}{4(c_1+c_2)}\rho_{f,\gg,V}(x,y),
\end{align*}
where in the second inequality we leveraged the fact that
$
6L_V(1+L_V^\eta)\gamma^{1-\eta}(1+c_2/{c_1})\le   c_3/{4(c_1+c_2)}
$
 by invoking the definition of $\gamma$ defined in \eqref{Q10}.
\end{proof}

With Lemmas \ref{lem2} and \ref{lem3} at hand, we are in position to complete the proof of Theorem \ref{thm1}.

\begin{proof}[Proof of Theorem \ref{thm1}]
Below, we choose $(X^{\delta,\vv}_0,X^{(\delta,\vv)}_0)=(X^\nu_0,X^{(\delta),\mu}_0)$ such that
\begin{align*}
\mathbb W_{\rho_V}(\mu,\nu)=\E\big((1\wedge |X^\nu_0-X^{(\delta),\mu}_0|)(1+V(X^\nu_0)+V(X^{(\delta),\mu}_0))\big).
\end{align*}
By combining Lemma \ref{Lemma1} with Lemma \ref{lem2} and  Lemma \ref{lem3}, we derive that
\begin{equation*}
\begin{split}
\d \rho_{g,\gamma,V}( X^{\dd,\vv}_t, X^{(\delta,\vv)}_t)  &\le -\Big(\frac{c_3}{4(c_1+c_2)}\wedge\ff{\gg \ll}{  1+2\gg } \Big)\rho_{g,\gg,V}(X^{\dd,\vv}_t, X^{(\delta,\vv)}_t)\,\d t+R^{\delta,\vv,\gamma}_t\,\d t+\d M^{\delta,\vv,\gamma}_t\\
&\quad+2 \big( (c_3 + 2c_2  K_1)\vv +2K_2\vv^\alpha  \big)\big(1+\gamma V(X^{\dd,\vv}_t)+\gamma V ( X^{(\delta,\vv)}_t)\big)\,\d t .
\end{split}
\end{equation*}
 Next, notice from $({\bf H}_3)$ that for all $x\in\R^d,$
 \begin{align*}
 |\nn V(x)|
 &\le \int_0^1\|\nn^2V(sx)\|_{\rm op}\,\d s|x|+ |\nn V({\bf0})|\\
 &\le L_V\int_0^1(1+V(sx)^\eta)\,\d s|x|+ |\nn V({\bf0})|\\
 &\le 2L_V(1+L_V^\eta)(1+V({\bf0})^\eta)  V(x)^\eta   |x|+ |\nn V({\bf0})|\\
 &\le  L_V(1+L_V^\eta)(1+2L_V)(1+V({\bf0})^\eta) (1+V(x))+ |\nn V({\bf0})|.
 \end{align*}
 This, together with $\gamma\in(0,1]$, $g'_{-}(r)\le 2c_2$ and $g(r)\le 1+c_1l_0,r\ge0,$ leads to
\begin{align*}
R^{\delta,\vv,\gamma}_t \le (2c_2\vee c^\star)R^{\delta,\vv} _t,
\end{align*}
where $R_t^{\delta,\vv}$ is defined in \eqref{F3} and 
$$c^\star:=\gamma(1+c_1l_0)\big(L_V(1+L_V^\eta)(1+2L_V)(1+V({\bf0})^\eta)\vee |\nn V({\bf0})|\big).$$
Subsequently, we have
\begin{equation*}
\begin{split}
&\d \rho_{g,\gamma,V}( X^{ \delta,\vv }_t, X^{(\delta,\vv)}_t)  \\
&\le\d M^{\delta,\vv,\gamma}_t -\Big(\frac{c_3}{4(c_1+c_2)}\wedge\ff{\gg \ll_V}{  1+2\gg } \Big)\rho_{g,\gg,V}(X^{ \delta,\vv}_t, X^{(\delta,\vv)}_t)\,\d t+ (2c_2\vee c^\star)R^{\delta,\vv} _t\,\d t\\
&\quad+2 \big( (c_3 + 2c_2  K_1)\vv +2K_2\vv^\alpha  \big)\big(1+ V( X^{ \delta,\vv}_t)+ V ( X^{(\delta,\vv)}_t)\big)\,\d t .
\end{split}
\end{equation*}
Finally, by recalling that  $(X^{(\dd,\vv)}_t, X^{\dd,\vv }_t)_{t\ge0}$ is a   coupling of $(X^{(\delta),\mu}_t,X_t^\nu)_{t\ge0}$,
  \eqref{Q1} is derived from  Gronwall's inequality followed by applying  Lemma \ref{lem0} and  $({\bf H}_2)$ (so $\sup_{0\le s\le t}\E V(X_t^\nu)<\8$), sending $\vv\to0 $ and noting that
\begin{align*}
c_1\gamma (1\wedge|x-y|)\big(1+  V(x)+ V(y)\big)\le \rho_{g,\gamma,V}(x,y)\le l_0(c_1+c_2)(1\wedge|x-y| )\big(1+  V(x)+ V(y)\big).
\end{align*}
\end{proof}

\subsection{Proof of Theorem \ref{pro1}}

In order to apply Theorem \ref{thm1} to concrete EM type schemes, we need to treat  the  remainder term $\E R^{\delta,\vv}_t$  in \eqref{Q1}.
For this purpose, it is indispensable to provide an explicit criterion imposed on $b^{(\delta)}$ to examine  that $(X^{(\delta)}_t)_{t\ge0}$ has finite $p$-th moment in the infinite horizon. More precisely, besides $({\bf H}_4)$,  we
suppose that
\begin{enumerate}
	\item[$({\bf A}_0)$] there exist constants $c^\star,\ll^\star>0$ and $\delta_0\in(0,1]$  such that for all $x\in \R^d $ and $\delta\in(0,\delta_0],$
	\begin{align}
	\<x,b^{(\dd)}(x)\>+\delta|b^{(\dd)}(x)|^2 \le  -\ll^\star|x|^2+c^\star.	
	\end{align}

\end{enumerate}
\begin{lemma}\label{lem0-1}
Under $({\bf A}_0)$ and $({\bf H}_4)$, for $p> 0$, there exist constants $C_0,\lambda_0>0$ and $\dd_0\in(0,1)$ such that
for any $t>0$ and  $\delta\in(0,\delta_0]$,
\begin{align}\label{W8}
\E\big(|X^{(\delta)}_t|^p\big|\mathscr F_0\big)\le  \e^{-\lambda_0t}|X^{(\delta)}_0|^p  +C_0(1+d^{\ff p2}) .
\end{align}
\end{lemma}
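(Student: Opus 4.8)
The plan is to establish \eqref{W8} first for all exponents $p\ge2$ and then reduce the range $0<p<2$ to the case $p=2$. Since $x\mapsto|x|^p$ is not $C^2$ at the origin, I would work throughout with the smooth surrogate $f_\kappa(x):=(\kappa+|x|^2)^{p/2}$, $\kappa\in(0,1]$, noting that $|x|^p\le f_\kappa(x)$ and $f_\kappa(x)\downarrow|x|^p$ as $\kappa\downarrow0$, and I would keep careful track that every constant produced is independent of $\kappa$; at the very end one sends $\kappa\downarrow0$ and uses monotone convergence on both sides of the resulting estimate. A standard finite-horizon Gronwall bound (cf.\ Lemma \ref{lem0}) makes all conditional expectations and It\^o manipulations below legitimate. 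The two quantitative inputs are a one-step conditional estimate at the grid points and a within-step estimate; composing them delivers the bound for every $t\ge0$ with the clean leading coefficient $1$.

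For the one-step estimate, writing $Y_k:=X^{(\delta)}_{k\delta}+\delta\,b^{(\delta)}(X^{(\delta)}_{k\delta})$ so that $X^{(\delta)}_{(k+1)\delta}=Y_k+\triangle W_{k\delta}$, assumption $({\bf A}_0)$, used with the full $\delta|b^{(\delta)}|^2$ correction, gives $|Y_k|^2\le(1-2\lambda^\star\delta)|X^{(\delta)}_{k\delta}|^2+2c^\star\delta$; splitting according to whether $|X^{(\delta)}_{k\delta}|$ is large or bounded and using $\kappa\le1$ yields $f_\kappa(Y_k)\le(1-\tilde\lambda\delta)f_\kappa(X^{(\delta)}_{k\delta})+C\delta$ for some $\tilde\lambda>0$. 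Then a second-order Taylor expansion in the Gaussian increment $\triangle W_{k\delta}\sim N({\bf 0},\delta I_d)$, whose first-order term vanishes under $\E(\,\cdot\,|\mathscr F_{k\delta})$, together with $\|\nabla^2f_\kappa(z)\|_{\rm op}\lesssim(1+|z|^2)^{p/2-1}$ and $\E|\triangle W_{k\delta}|^p\lesssim(\delta d)^{p/2}$, leaves a remainder $\lesssim\delta\big(d\,(1+|Y_k|^2)^{p/2-1}+d^{p/2}\big)$. The term $\delta d(1+|Y_k|^2)^{p/2-1}$ is absorbed into $\tfrac12\tilde\lambda\delta f_\kappa(X^{(\delta)}_{k\delta})$ via Young's inequality with a $d$-dependent parameter $\varepsilon\asymp1/d$, whose Young constant $\asymp d^{p/2-1}$ combines with the prefactor $\delta d$ to give exactly $\delta d^{p/2}$; this is the step that produces the additive constant of order $1+d^{p/2}$. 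One obtains, for $\delta$ below a threshold $\delta_0\in(0,1)$ and all $\kappa\in(0,1]$,
\begin{align*}
\E\big(f_\kappa(X^{(\delta)}_{(k+1)\delta})\,\big|\,\mathscr F_{k\delta}\big)\le(1-\lambda_0\delta)f_\kappa(X^{(\delta)}_{k\delta})+C_1\delta(1+d^{p/2}).
\end{align*}

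For a general time $t$ with $t_\delta\le t<t_\delta+\delta$, I would apply It\^o's formula to $f_\kappa(X^{(\delta)}_s)$ on $[t_\delta,t]$, as in the proof of Lemma \ref{lem0} but now exploiting dissipativity: the frozen drift term contributes, through $\<\nabla f_\kappa(x),b^{(\delta)}(x)\>=p(\kappa+|x|^2)^{p/2-1}\<x,b^{(\delta)}(x)\>\le-c_0f_\kappa(x)+C_1$ (a consequence of $({\bf A}_0)$ and $\kappa\le1$), a genuine decay of rate $c_0$ over the sub-interval; the Laplacian term is $\lesssim(p+d)(1+|\cdot|^2)^{p/2-1}$ and is absorbed via Young into $\varepsilon f_\kappa+Cd^{p/2}$; and the discretization gap $\<\nabla f_\kappa(X^{(\delta)}_s)-\nabla f_\kappa(X^{(\delta)}_{t_\delta}),b^{(\delta)}(X^{(\delta)}_{t_\delta})\>$, integrated over $[t_\delta,t]$, is of order $t-t_\delta$, because $|X^{(\delta)}_s-X^{(\delta)}_{t_\delta}|$ is $O((s-t_\delta)^{1/2})$ from the Brownian part and, by $({\bf H}_4)$ with $\theta\le1/2$, only $O(\delta^{1/2})(1+|X^{(\delta)}_{t_\delta}|)$ from the drift part, so it too is absorbable with constants of order $1+d^{p/2}$. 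After possibly shrinking $\delta_0$ and $\lambda_0$ so that one rate serves both estimates, this gives
\begin{align*}
\E\big(f_\kappa(X^{(\delta)}_t)\,\big|\,\mathscr F_{t_\delta}\big)\le\e^{-\lambda_0(t-t_\delta)}f_\kappa(X^{(\delta)}_{t_\delta})+C_1\delta(1+d^{p/2}).
\end{align*}
Iterating the one-step estimate yields $\E(f_\kappa(X^{(\delta)}_{k\delta})|\mathscr F_0)\le\e^{-\lambda_0k\delta}f_\kappa(X^{(\delta)}_0)+\tfrac{C_1}{\lambda_0}(1+d^{p/2})$, and conditioning the within-step estimate on $\mathscr F_0$ and composing the two exponential factors, which multiply to $\e^{-\lambda_0t}$ exactly because the within-step carries its own decay, gives $\E(f_\kappa(X^{(\delta)}_t)|\mathscr F_0)\le\e^{-\lambda_0t}f_\kappa(X^{(\delta)}_0)+C_0(1+d^{p/2})$ for every $t\ge0$; letting $\kappa\downarrow0$ proves \eqref{W8} for $p\ge2$. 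For $0<p<2$ one applies the case $p=2$ together with Jensen's inequality $\E(|X^{(\delta)}_t|^p|\mathscr F_0)\le\big(\E(|X^{(\delta)}_t|^2|\mathscr F_0)\big)^{p/2}$ and the subadditivity of $r\mapsto r^{p/2}$.

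The main obstacle is a threefold tension to manage at once: the additive constant must have the sharp order $1+d^{p/2}$, which forces the $d$-dependent Young split in the one-step estimate; the leading exponential coefficient must be exactly $1$ rather than some $C\ge1$, which forces both that the within-step estimate carry honest decay (not merely bounded growth) and the passage through the $C^2$ surrogate $f_\kappa$ (since $|x|^p$ and $f_\kappa(x)$ differ by the \emph{unbounded} amount $\asymp|x|^{p-2}$); and the modified drift $b^{(\delta)}$ has only a $\delta$-dependent linear growth that blows up as $\delta\downarrow0$. This last point is precisely what $({\bf H}_4)$ with $\theta\in(0,1/2]$ controls, delivering $\delta\,|b^{(\delta)}(x)|\lesssim\delta^{1/2}(1+|x|)$ and $\delta\,|b^{(\delta)}(x)|^2\lesssim1+|x|^2$, which keeps the one-step displacement and the discretization-gap term subordinate to the $\lambda^\star$-dissipation coming from $({\bf A}_0)$.
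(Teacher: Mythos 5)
Your argument is correct and reaches the same bound, but it is organized along a genuinely different route from the paper's. The paper avoids the non-smoothness of $|x|^p$ at the origin not by mollifying but by first reducing (via Jensen's inequality) to \emph{even integers} $p\ge6$, so that $|x|^p$ is $C^2$ outright; it then applies It\^o's formula to $|X^{(\delta)}_t|^p$ continuously in time on $[0,t]$, produces a single differential inequality $\d\E|X^{(\delta)}_t|^p\le-\lambda_0\E|X^{(\delta)}_t|^p\,\d t+C(1+d^{p/2})\,\d t$ after a careful Young split (most notably, it transfers the frozen-argument dissipation $-\lambda^\star|X^{(\delta)}_{t_\delta}|^2$ to a $-\frac12\lambda^\star|X^{(\delta)}_t|^2$-type term using the a priori identity relating $|X^{(\delta)}_{t_\delta}|^2$ and $|X^{(\delta)}_t|^2$ obtained from the one-step displacement), and finishes with one application of Gronwall. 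You instead work with the $C^2$ surrogate $f_\kappa(x)=(\kappa+|x|^2)^{p/2}$ for all $p\ge2$, prove a discrete one-step contraction at the grid points plus a within-step It\^o estimate carrying its own decay, and compose the two; you then send $\kappa\downarrow0$ and Jensen down to $p<2$.

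The two routes buy slightly different things. The paper's continuous-time treatment is shorter and sidesteps the composition-of-exponents bookkeeping entirely: one Gronwall on $[0,t]$ immediately gives the leading coefficient $1$. Your two-stage version makes the discrete contraction factor $(1-\lambda_0\delta)$ at grid points visible, which is conceptually transparent, but you then have to insist (correctly) that the within-step step also carry honest exponential decay rather than mere $O(\delta)$ growth, since otherwise the within-step prefactor $(1+C\delta)$ would spoil the coefficient $1$; this extra constraint is the price of the decomposition. Your mollification with $\kappa\downarrow0$ is a standard alternative to the paper's ``even $p\ge6$ then Jensen'' reduction, and both are fine; just note that keeping every constant $\kappa$-independent does require a little vigilance at the one-step stage, as you flag. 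The one place where your write-up is thinner than the paper's proof is the absorption of the discretization gap: the paper pins down the Young parameters explicitly so that the $\delta\,|b^{(\delta)}(X^{(\delta)}_{t_\delta})|^2$-type contribution, which via $({\bf H}_4)$ can be of the \emph{same} order $|X^{(\delta)}_{t_\delta}|^2$ as the dissipation, lands at exactly half the dissipation rate $\lambda^\star$; asserting that it ``is absorbable'' is true but needs that precise tuning to hold. With that understood, your proposal is sound.
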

\begin{proof}
Via Jensen's inequality, it is sufficient to show \eqref{W8} for  even integer  $p\ge 6$.
Applying It\^o's formula, we deduce from \eqref{eq2}, $({\bf A}_0)$ and $({\bf H}_4)$ that for all $\delta\in(0,\delta_0],$
\begin{align*}
\d |X^{(\delta)}_t|^p
&=p |X^{(\delta)}_t|^{p-2}\< X^{(\delta)}_{{t_\dd}},b^{(\dd)}(X^{(\delta)}_{t_\dd})\>\,\d t\\
&\quad+p |X^\dd_t|^{p-2}\<X^{(\delta)}_t-X^{(\delta)}_{t_\dd},b^{(\dd)}(X^{(\delta)}_{t_\dd})\>\,\d t+\ff{1}2p(d+p-2)|X^{(\delta)}_t|^{p-2}\,\d t +\d M_t\\
&\le-p\ll^\star  |X^{(\delta)}_t|^{p-2}
|X^{(\delta)}_{t_\dd}|^2\,\d t+p\dd^{\ff12} |X^{(\delta)}_t|^{p-2}|W^\dd_t|\cdot|b^{(\dd)}(X^{(\delta)}_{t_\dd})|\,\d t\\
&\quad+p|X^{(\delta)}_t|^{p-2}|b^{(\dd)}(X^{(\delta)}_{t_\dd})|^2\dd^2\,\d t+p\big(c^\star  + (d+p-2)/2\big) |X^{(\delta)}_t|^{p-2}\,\d t+\d M_t\\
&\le -p\ll^\star  |X^\dd_t|^{p-2}
|X^{(\delta)}_{t_\dd}|^2\,\d t+\frac{1}{2}p |X^{(\delta)}_t|^{p-2}\big(2( \lambda^\star)^{-1}(c^*)^2|W^\dd_t|^2\,\d t\\
&\quad+ \delta(\lambda^\star  (c^*)^{-2}+4\dd)(c_*^2+(c^*)^2\delta^{-2\theta}| X^{(\delta)}_{t_\dd} |^2\big)\big)\,\d t\\
&\quad+p\big(c^\star  + (d+p-2)/2\big) |X^{(\delta)}_t|^{p-2}\,\d t+\d M_t\\
&\le  -\frac{1}{2}p\ll^\star  |X^{(\delta)}_t|^{p-2}
|X^{(\delta)}_{t_\dd}|^2\,\d t+\d M_t\\
&\quad+p\big(c^\star  + (d+p-2)/2+( \lambda^\star)^{-1}(c^*)^2|W^\dd_t|^2\,\d t+\lambda^\star (c^*)^{-2} c_*^2/2+2\big) |X^{(\delta)}_t|^{p-2}\big)\,\d t,
 		\end{align*}
	where $(M_t)_{t\ge 0}$ is a local martingale, and $W^\delta_t:=(W_t-W_{t_\delta})(t-t_\delta)^{-\frac{1}{2}}\sim N({\bf 0}, I_d)$ due to the scaling property of  $(W_t)_{t\ge0}$. Again, we derive  from  \eqref{eq2}  that for all $\delta\in(0,1],$
\begin{align*}
|X^{(\delta)}_t|^2&=\big|X^{(\delta)}_{t_\delta}+b^{(\dd)}(X^{(\delta)}_{t_\dd})(t-t_\delta ) +W_t-W_{t_\delta}\big|^2\\
&\le3\big(|X^{(\delta)}_{t_\delta}|^2+|b^{(\dd)}(X^{(\delta)}_{t_\dd})|^2\delta^2+\dd|W^\dd_t|^2\big)\\
&\le3\big(|X^{(\delta)}_{t_\delta}|^2+2(c_*^2+(c^*)^2\delta^{-2\theta}|X^{(\delta)}_{t_\dd}|^2) \delta^2+\dd|W^\dd_t|^2\big)\\
&\le3\big((1+2(c^*)^2)|X^{(\delta)}_{t_\delta}|^2+2 c_*^2  \delta +\dd|W^\dd_t|^2\big)
\end{align*}
so that
\begin{align*}
\frac{1}{1+2(c^*)^2}\Big(\frac{1}{3}|X^{(\delta)}_t|^2-2 c_*^2\delta-\dd|W^\dd_t|^2\Big)\le|X^{(\delta)}_{t_\delta}|^2.
\end{align*}
This enables us to deduce that
\begin{align*}
\d |X^{(\delta)}_t|^p
&\le  - \frac{p\ll^\star}{6(1+2(c^*)^2)}   |X^{(\delta)}_t|^{p }
 \,\d t + \Gamma^\delta_t  |X^{(\delta)}_t|^{p-2}\,\d t+\d M_t,
\end{align*}
where
\begin{align*}
\Gamma^\delta_t:=&\frac{p\ll^\star}{2(1+2(c^*)^2)}\big(  2 c_*^2\delta+\dd|W^\dd_t|^2\big) \\
&+p\big(c^\star  + (d+p-2)/2+( \lambda^\star)^{-1}(c^*)^2|W^\dd_t|^2+\lambda^\star (c^*)^{-2} c_*^2/2+2\big).
\end{align*}
Next, by the Young inequality, it follows that for any $\vv>0,$
\begin{align*}
\Gamma^\delta_t  |X^{(\delta)}_t|^{p-2}\le \vv |X^{(\delta)}_t|^{p }+\frac{2}{p}\Big(\frac{p-2}{p\vv}\Big)^{\frac{p}{2}-1}(\Gamma^\delta_t)^{\frac{1}{2}p}.
\end{align*}
Whence, taking $\vv=\frac{p\ll^\star}{12(1+2(c^*)^2)}$ yields that for some constant $c_p>0,$
\begin{align*}
\d |X^{(\delta)}_t|^p
\le  - \frac{p\ll^\star}{12(1+2(c^*)^2)}   |X^{(\delta)}_t|^{p }
 \,\d t + c_p(1+d^{\frac{1}{2}p})\,\d t+\d M_t.
 \end{align*}
Consequently, the assertion \eqref{W8} is reachable by applying   Gronwall's inequality.
\end{proof}

\begin{lemma}\label{lem4}
For $p>0,$ let $V_p(x)=(1+|x|^2)^{\frac{1}{2}p}, x\in\R^d.$ Then,   under  $({\bf A}_2)$ and the hypothesis that for some  constants $L>0$ and $\alpha\in(0,1)$,
\begin{align}\label{EW8}
|b_0(x) |\le L(1+|x |^\alpha), \quad x\in\R^d,
\end{align}
$({\bf H}_2)$  is also satisfied so that for some constant $C_p>0,$
\begin{align}\label{EW13}
\E(|X_t|^p|\mathscr F_0)\le C_p(1+|X_0|^p)(d^{\ff p2 }+1).
\end{align}

\end{lemma}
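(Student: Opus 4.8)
The plan is to verify $({\bf H}_2)$ for $V = V_p$ with a suitable choice of $\lambda_V, C_V > 0$, and then deduce \eqref{EW13} by a standard It\^o-plus-Gronwall argument applied to the genuine SDE \eqref{eq1}. First I would compute the action of the generator $\mathscr L = \frac12\Delta + \langle b, \nabla\cdot\rangle$ on $V_p$. A direct calculation gives $\nabla V_p(x) = p(1+|x|^2)^{\frac{p}{2}-1}x$ and, after differentiating once more,
\begin{align*}
\Delta V_p(x) = p(1+|x|^2)^{\frac{p}{2}-1}\big(d + (p-2)|x|^2(1+|x|^2)^{-1}\big) \le p(d+p-2)(1+|x|^2)^{\frac{p}{2}-1},
\end{align*}
so $\Delta V_p$ is of order $(1+|x|^2)^{\frac{p}{2}-1}$, i.e.\ strictly lower order than $V_p$.

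Next I would estimate the drift term $\langle \nabla V_p(x), b(x)\rangle = p(1+|x|^2)^{\frac{p}{2}-1}\langle x, b_0(x)+b_1(x)\rangle$. Using $({\bf A}_2)$, $\langle x, b_1(x)\rangle \le -\lambda^*|x|^2 + C^*$, and using \eqref{EW8} together with Young's inequality, $\langle x, b_0(x)\rangle \le |x|\cdot L(1+|x|^\alpha) \le \tfrac{\lambda^*}{2}|x|^2 + C$ for a constant $C = C(L,\alpha,\lambda^*)$, since $\alpha \in (0,1)$ makes $|x|^{1+\alpha}$ subquadratic. Hence $\langle x, b(x)\rangle \le -\tfrac{\lambda^*}{2}|x|^2 + (C^*+C)$, and therefore
\begin{align*}
(\mathscr L V_p)(x) \le p(1+|x|^2)^{\frac{p}{2}-1}\Big(-\tfrac{\lambda^*}{2}|x|^2 + C^*+C + \tfrac{d+p-2}{2}\Big).
\end{align*}
Writing $-\tfrac{\lambda^*}{2}|x|^2 = -\tfrac{\lambda^*}{2}(1+|x|^2) + \tfrac{\lambda^*}{2}$ and absorbing the low-order remainder $p(1+|x|^2)^{\frac p2 -1}(\text{const})$ into $-\tfrac{\lambda^*}{4}V_p(x) + C'$ (valid because $(1+|x|^2)^{\frac p2 -1} \le \varepsilon(1+|x|^2)^{\frac p2} + C_\varepsilon$ by Young's inequality), one obtains $(\mathscr L V_p)(x) \le -\lambda_V V_p(x) + C_V$ with $\lambda_V = \tfrac{p\lambda^*}{4}$ and a dimension-dependent constant $C_V$; tracking constants carefully shows $C_V$ can be taken of order $d^{p/2}+1$. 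Since $V_p$ is obviously a compact function with values in $[1,\infty)$, $({\bf H}_2)$ holds.

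Finally, to get \eqref{EW13}, I would apply It\^o's formula to $V_p(X_t)$ along \eqref{eq1} (after a routine localization by stopping times $\tau_n$ to justify taking expectations, the martingale part vanishing in conditional expectation), yielding $\frac{\d}{\d t}\E(V_p(X_{t})|\mathscr F_0) \le -\lambda_V \E(V_p(X_t)|\mathscr F_0) + C_V$; Gronwall then gives $\E(V_p(X_t)|\mathscr F_0) \le \e^{-\lambda_V t}V_p(X_0) + \lambda_V^{-1}C_V$, and \eqref{EW13} follows from $|x|^p \le V_p(x) \le 2^{p/2}(1+|x|^p)$ together with the bound $C_V \lesssim d^{p/2}+1$. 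The only mildly delicate point is the bookkeeping that produces the explicit dimension dependence $d^{p/2}+1$ on the right-hand side of \eqref{EW13}: one must ensure that the additive constant coming from $\tfrac12\Delta V_p$ (which carries the factor $d$) is raised to the correct power when it is absorbed and then fed through Gronwall, so I would carry the constant $C_V = C(1 + d)$ explicitly and note $(1+d)^{?}$... more precisely that after the low-order absorption the free constant is $\lesssim d^{p/2}+1$, matching the claimed form. I expect no genuine obstacle here — the lemma is a routine Lyapunov computation — so the main "work" is simply keeping the constants clean.
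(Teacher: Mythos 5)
Your proposal is correct and follows essentially the same route as the paper: compute $\nabla V_p$ and the second-order term, bound $\langle x,b(x)\rangle \le -\tfrac12\lambda^*(1+|x|^2)+C_1$ from $({\bf A}_2)$ and \eqref{EW8} via Young's inequality, absorb the resulting lower-order term $p(1+|x|^2)^{\frac p2-1}(C_1+d/2+(p-2)^+/2)$ into $-\lambda_V V_p + C_V$ by another application of Young's inequality (giving $C_V\lesssim d^{p/2}+1$), and conclude \eqref{EW13} by It\^o and Gronwall. The only cosmetic difference is that the paper records $\nabla^2 V_p$ explicitly before taking its trace, whereas you write $\Delta V_p$ directly; the substance is identical.
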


\begin{proof}
Note that for all $x\in\R^d,$
\begin{align}\label{EW9}
\nn V_p(x)=p(1+|x|^2)^{\frac{p}{2}-1}x ~\mbox{ and }~ \nn^2 V_p(x)=p\Big(I_{d }+(p-2) \ff{x x^\top}{1+|x|^2}\Big)(1+|x|^2)^{\frac{p}{2}-1}.
\end{align}
Thus, one has
\begin{align*}
	(\mathscr LV_p)(x)\le p(1+|x|^2)^{\frac{p}{2}-1}\big(\<x,b(x)\>+   d/2+(p-2)^+/2 \big) .
\end{align*}
This, together with
\begin{align*}
\<x,b (x) \>&\le-\ll^*|x|^2+L|x|(1+ |x|^\aa)+C^* \le -\frac{1}{2}\ll^*(1+|x|^2)+ C_1
\end{align*}
by taking advantage of $({\bf A}_2)$ and \eqref{EW8}, implies that
\begin{align*}
	(\mathscr LV_p)(x)\le-\frac{1}{2} p\lambda^*V_p(x)+p(1+|x|^2)^{\frac{p}{2}-1}(C_1+   d/2+(p-2)^+/2  ),
\end{align*}
where
\begin{align*}
C_1:= C^*+\frac{1}{2}L(1-\alpha)\big(L(1+\alpha)/{\lambda^*}\big)^{\frac{1+\alpha}{1-\alpha}}+\frac{1}{2}\ll^*.
\end{align*}
Thus, $({\bf H}_2)$ is valid by applying Young's inequality. At length, the assertion \eqref{EW13} follows from Gronwall's inequality.
\end{proof}

By the aid of Theorem \ref{thm1},
Lemma \ref{lem0-1} as well as  Lemma \ref{lem4}, the following part is devoted to the proof of Theorem  \ref{pro1}.

\begin{proof}[Proof of Theorem \ref{pro1}]
In the  analysis below, we take $V(x)=(1+|x|^2)^{\frac{1}{2}p}, x\in\R^d,$ for $p>0.$ Once we can prove that there exist constants
$\ll^*,C_*, p^*,m^*>0,\dd^*\in (0,1)$ such that for all $\mu\in\mathscr	P_{p^*}(\R^d),\nu \in \mathscr P_{2p}(\R^d)$ and $\dd\in(0,\dd^*)$,
\begin{align}\label{JJ*}
	\mathcal W_{\rho_V}\big(\mathscr L_{X^{(\dd),\mu}_t},\mathscr L_{X^{\nu}_t}\big)\le C_*\e^{-\ll^* t}	\mathcal W_{\rho_{V}}(\mu,\nu)+C_*d^{m^*}\dd^{\ff\aa2},
	\end{align}
	the desired assertion \eqref{pro1*} can be attainable by noting the fact that
\begin{align*}
2^{-\ff2p}(1+|x|^p)\le V(x)\le 2^{(\frac{1}{2}p-1)^+}(1+|x|^p),\quad x\in\R^d.
\end{align*}
So, it boils down to verifying \eqref{JJ*} in order to complete the proof of Theorem \ref{pro1}.

According to  Lemma   \ref{lem4}, $({\bf H}_2)$ is satisfied. Subsequently, based on Remark \ref{Rem*} (ii),
 key points are left  to check  $({\bf H}_4)$ and $({\bf A}_0)$, and estimate the corresponding remainder by making use of
Theorem \ref{thm1}.

{(1) \it Proof of \eqref{JJ*} for    the   modified truncated EM scheme  \eqref{MTEM}.}
As regards the modified truncated EM scheme  \eqref{MTEM},  we stipulate
$$\delta_0=2^{-\frac{1}{\theta}}\wedge \bigg(\frac{\lambda^*}{8(2 \lambda^* +\alpha+  L_0 )^2 }\bigg)^{\frac{1}{1-2\theta}},$$
 and    choose
 \begin{align}\label{EW10}
 b^{(\delta)}(x) =-\lambda^*x+b_0(x)+\bar b_1(\pi^{(\delta,\theta)}(x)),\quad x\in\R^d.
 \end{align}
Then,  it follows from (${\bf A}_1$) that  for any  $x\in\R^d,$
\begin{align*}
 |b^{(\delta)}(x)-b(x)|&=\big|-\lambda^*(x -\pi^{(\delta,\theta)}(x))+b_1(\pi^{(\delta,\theta)}(x)) -b_1(x) \big|\\
 &\le \big(\lambda^*  +L_0(1+|x|^{\ell_0}+|\pi^{(\delta,\theta)}(x)|^{\ell_0})\big)|x -\pi^{(\delta,\theta)}(x)|\\
 &\le \big(\lambda^*  +L_0(1+2|x|^{\ell_0}) \big)|x -\pi^{(\delta,\theta)}(x)|\\
 &= \big(\lambda^*  +L_0(1+2|x|^{\ell_0}) \big)\big(|x|-\big(|x|\wedge(\dd^{-\theta}-1)^{\ff{1}{\ell_0}}\big)\big),
\end{align*}
where in the second inequality we used the fact that $|\pi^{(\delta,\theta)}(x)|\le|x|$ by taking the definition of $\pi^{(\delta,\theta)}$ into consideration, and in the identity we utilized
$
\big| x-\pi^{(\delta,\theta)}(x)\big|= |x|-\big(|x|\wedge(\dd^{-\theta}-1)^{\ff{1}{\ell_0}}\big) .
$
As a consequence, we infer that, for fixed $x\in\R^d,$  $|b^{(\delta)}(x)-b(x)|\to0$ as $\delta\to0.$

  By combining  \eqref{EW10} with
\begin{align}\label{EW15}
 |b_0(x)|\le |b_0({\bf0})|+K_2  |x|^\alpha ,\quad |\pi^{(\delta,\theta)}(x)|\le|x|,\quad |b_1(x)|\le|b_1({\bf0})|+L_0(1+|x|^{\ell_0})|x|,
 \end{align}
as well as $|\pi^{(\delta,\theta)}(x)|^{\ell_0}=|x|^{\ell_0}\wedge(\delta^{-\theta}-1)$,
 it is obvious that for all $x\in\R^d $ and $\delta\in(0,1],$
\begin{equation}\label{EW12}
\begin{split}
 |b^{(\delta)}(x)|&\le \lambda^*|x|+|b_0(x)|+\lambda^*| \pi^{(\delta,\theta)}(x) |+|b_1(\pi^{(\delta,\theta)}(x))| \\
 &\le 2\lambda^*|x|+K_2  |x|^\alpha+L_0(1+|x|^{\ell_0}\wedge(\delta^{-\theta}-1))|x|+|b_0({\bf0})|+  |b_1({\bf0})|\\
 &\le(2\lambda^*+\alpha+L_0\delta^{-\theta})|x|+c_1,
 \end{split}
 \end{equation}				
where $c_1:=(1-\alpha)K_2^{\frac{1}{1-\alpha}} +  |b_0({\bf0})|+  |b_1({\bf0})|$.
Thereby, $({\bf H}_4)$ is verifiable.

By invoking (${\bf A}_2$), along with $x=\frac{|x|}{|\pi^{(\delta,\theta)}(x)|}  \pi^{(\delta,\theta)}(x)$ for $x\in\R^d$ with $|x|\neq0$,
 we find that  for all $x\in\R^d$ with $|x|\neq 0,$
\begin{equation}\label{EYY}	
\begin{split}
 \<x, b^{(\delta)}(x)\> &=-\lambda^*|x|^2+\<x,b_0(x)\>+ \<x,b_1(\pi^{(\delta,\theta)}(x))+\ll^*\pi^{(\delta,\theta)}(x) \> \\
 &=-\lambda^*|x|^2+\<x,b_0(x)\> +\frac{|x|}{|\pi^{(\delta,\theta)}(x)|} \<\pi^{(\delta,\theta)}(x),b_1(\pi^{(\delta,\theta)}(x))+\ll^*\pi^{(\delta,\theta)}(x) \> \\
 &\le -\lambda^*|x|^2+(|b_0({\bf0})|\cdot|x|+K_2|x|^{1+\alpha})+\frac{C^*|x|}{|\pi^{(\delta,\theta)}(x)|},
\end{split}
\end{equation}
where in the last display we employed $|b_0(x)|\le |b_0({\bf0})|+K_2|x|^\alpha$ (see $({\bf H}_2)$ for more details).
Next, for any $x\in\R^d$ with $|x|\neq 0 $ and $\delta\in(0,\delta_0],$
\begin{align*}
\frac{ |x|}{|\pi^{(\delta,\theta)}(x)|}&=\frac{ |x|}{ |x|\wedge(\dd^{-\theta}-1)^{\ff{1}{\ell_0}} }\I_{\{|x|\le1\}}+\frac{ |x|}{ |x|\wedge(\dd^{-\theta}-1)^{\ff{1}{\ell_0}} }\I_{\{|x|>1\}}\\
&\le \I_{\{|x|\le1\}}+|x|\I_{\{|x|>1\}}\\
&\le  1+|x| .
\end{align*}
This, along with  \eqref{EW12} and \eqref{EYY}, further yields that for all $\delta\in(0,\delta_0]$ and $x\in\R^d,$
\begin{align*}	
 \<x, b^{(\delta)}(x)\>+\delta|b^{(\delta)}(x)|^2
 &\le -\lambda^*|x|^2+(|b_0({\bf0})|\cdot|x|+K_2|x|^{1+\alpha})+ C^*( 1+|x|)\\
 &\quad+2\big((2\lambda^*+\alpha+L_0\delta^{-\theta})^2|x|^2+c_1^2\big) \delta.
\end{align*}
Whence, due to $2 (2\lambda^*+\alpha+L_0\delta^{-\theta})^2  \delta\le \frac{1}{4}\lambda^*$ for $\delta\in(0,\delta_0]$,
 we conclude from  the Young inequality that there exists a constant $C_1>0$ such that  for all $\delta\in(0,\delta_0]$ and $x\in\R^d,$
\begin{align*}
\<x, b^{(\delta)}(x)\>+\delta|b^{(\delta)}(x)|^2 \le - \frac{1}{2}\lambda^*|x|^2+C_1
\end{align*}
so that  (${\bf A}_0$) is available.

 In terms of \eqref{EW10}, we obtain from $({\bf H}_1)$ and  $({\bf A}_1)$ that  for any $x,y\in\R^d$,
\begin{align*}
 |b(x)-b^{(\dd)}(y)|&=|b_0(x)+b_1(x)-(-\lambda^*y+b_0(y)+ \lambda^*  \pi^{(\delta,\theta)}(y) +b_1(\pi^{(\delta,\theta)}(y)))|\\
 &\le |b_0(x)-b_0(y)|+\lambda^*|y-\pi^{(\delta,\theta)}(y)|+|b_1(x)-b_1(y)|+|b_1(y)-b_1(\pi^{(\delta,\theta)}(y))|\\
 &\le K_2|x-y|^\alpha +L_0(1+|x|^{\ell_0}+|y|^{\ell_0})|x-y|\\
 &\quad+ \big(\lambda^*+L_0(1+ |y|^{\ell_0}+|\pi^{(\delta,\theta)}(y)|^{\ell_0})\big)|y-\pi^{(\delta,\theta)}(y)|.
 \end{align*}
 Furthermore,  one apparently has for any $\alpha>0,$
 \begin{equation}\label{EW11}
 \begin{split}
 |x-\pi^{(\delta,\theta)}(x)|&= \big(|x|- (\dd^{-\theta}-1)^{\ff{1}{\ell_0}} \big)\I_{\{(\dd^{-\theta}-1)^{\ff{1}{\ell_0}}\le|x|\}} \\
 &\le  \big(|x|- (\dd^{-\theta}-1)^{\ff{1}{\ell_0}} \big) |x|^\alpha (\dd^{-\theta}-1)^{-\ff{\alpha}{\ell_0}} \\
 &\le  |x|^{1+\alpha} (\dd^{-\theta}/2)^{-\ff{\alpha}{\ell_0}} ,
\end{split}
\end{equation}
 where in the second inequality we exploited $\dd^{-\theta}-1\ge\dd^{-\theta}/2$ for any $\delta\in(0,\delta_0].$ Thereafter, taking $\alpha=\ell_0/{(2\theta)}$ in \eqref{EW11} gives that for all $ x \in\R^d$ and $\delta\in(0,\delta_0]$
 \begin{equation*}
 |x-\pi^{(\delta,\theta)}(x)|
  \le 2^{\frac{1}{2\theta}}\delta^{\frac{1}{2}} |x|^{1+\ell_0/{(2\theta)}} .
\end{equation*}
 This, besides $|\pi^{(\delta,\theta)}(y)|\le |y|,$ leads to
 \begin{align*}
 |b(x)-b^{(\dd)}(y)|
 &\le K_2|x-y|^\alpha +L_0(1+|x|^{\ell_0}+|y|^{\ell_0})|x-y|\\
 &\quad+ 2^{\frac{1}{2\theta}}\delta^{\frac{1}{2}} \big(\lambda^*+L_0(1+2|y|^{\ell_0} )\big)|y|^{1+\ell_0/{(2\theta)}}, \quad x,y\in\R^d.
 \end{align*}
 As a result, we reach that
 \begin{equation}\label{F1}
 \begin{split}
 \E|R^{\dd,\vv}_t|&\le \E\Big(\Big(K_2|X^{(\dd,\vv)}_t-X^{(\dd,\vv)}_{t_\delta}|^\alpha +L_0(1+|X^{(\dd,\vv)}_t|^{\ell_0}+|X^{(\dd,\vv)}_{t_\delta}|^{\ell_0})|X^{(\dd,\vv)}_t-X^{(\dd,\vv)}_{t_\delta}|\\
 &\quad\quad\quad+2^{\frac{1}{2\theta}}\delta^{\frac{1}{2}} \big(\lambda^*+L_0(1+2|X^{(\dd,\vv)}_{t_\delta}|^{\ell_0} )\big)|X^{(\dd,\vv)}_{t_\delta}|^{1+\ell_0/{(2\theta)}}\Big)\\
 &\quad\quad\times \big(1+  (1+|X^{ \delta,\vv }_t|^2)^{\frac{1}{2}p}+(1+ |X^{(\delta,\vv)}_t|^2)^{\frac{1}{2}p}\big)\Big).
\end{split}
\end{equation}
Let 
\begin{align*}
\hat W_t=\int_0^th_\vv(|Z^{ \dd,\vv }_s|)\Pi(Z^{ \dd,\vv }_s)\, \d   \bar W_s+ \int_0^th^*_\vv(|Z^{ \dd,\vv }_s|)\,\d \tilde W_s,\quad t\ge0, 
\end{align*}
which is a standard $d$-dimensional Brownian motion. 
Next, from \eqref{eq3}, \eqref{EW10} and \eqref{EW12}, we have for any $\beta\in(0,1],$
\begin{align*}
 |X^{(\dd,\vv)}_t-X^{(\dd,\vv)}_{t_\dd}|^\beta&\le\big((2\lambda^*+\alpha+L_0 )|X^{(\dd,\vv)}_{t_\dd}|+c_1    +   |\hat W^\delta_t| \big)^\beta(t-t_\delta)^{\frac{1}{2}\beta}\\
 &\le \big((2\lambda^*+\alpha+L_0 )^\beta|X^{(\dd,\vv)}_{t_\dd}|^\beta+c_1^\beta    +   |\hat W^\delta_t|^\beta \big)\delta^{\frac{1}{2}\beta},
\end{align*}
where $\hat W^\delta_t:=(\hat W_t-\hat W_{t_\delta})(t-t_\delta)^{-\frac{1}{2}}\in N({\bf 0}, I_d)$.
Subsequently, by recalling that   $(X^{(\dd,\vv)}_t, X^{\dd,\vv }_t)_{t\ge0}$ is a   coupling of $(X^{(\delta),\mu}_t,X_t^\nu)_{t\ge0}$,
we deduce that there exist constants $c_2,c_3,c_4>0$ such that
\begin{align*}
 \E|R^{\dd,\vv}_t|&\le c_2 \delta^{\frac{1}{2}\alpha} \big(1+\E|X^{(\delta),\mu}_{t_\delta}|^{2\alpha} +\E|X^\nu_t|^{2p}+\E|X^{(\delta),\mu}_t|^{2p} \big) \\
 &\quad+c_3\delta^{\frac{1}{2} } \big(1+\E|X^{(\dd),\mu}_{t_\dd}|^{2(\ell_0+1)} +\E|X^{(\delta),\mu }_t|^{2\ell_0}+\E|X^\nu_t|^{2p}+\E|X^{(\delta),\mu}_t|^{2p}   \big) \\
 &\quad+c_4 \delta^{\frac{1}{2} } \big(1+\E|X^{(\dd),\mu}_{t_\dd}|^{2(\ell_0+1+\ell_0/{(2\theta)})}  +\E|X^\nu_t|^{2p}+\E|X^{(\delta),\mu}_t|^{2p}   \big).
\end{align*}
The previous estimate, in addition to Lemma \ref{lem0-1} and Lemma \ref{lem4}, enables us to finish the proof of Theorem  \ref{pro1} for    the modified      scheme \eqref{MTEM}.

{(2) \it  Proof of \eqref{JJ*} for    the   modified tamed  EM scheme \eqref{TEM}.}
In the following analysis, we take
\begin{align*}
\delta_0=1\wedge\bigg(\frac{\lambda^*}{6(4(\lambda^*)^2+ 2 L_0^2+(K_2\aa)^2) }\bigg)^{\frac{1}{1-2\theta}}
\end{align*}
and stipulate $\delta\in(0,\delta_0]$.  Regarding the  scheme \eqref{TEM}, we choose
\begin{align}\label{F4}
b^{(\delta)}(x)=b_0(x)-\ll^*x+\ff{b_1(x)+\ll^*x}{(1+\dd^{2\theta}|x|^{2\ell_0})^{\ff12}},\quad x\in\R^d.
\end{align}
By virtue of the basic  inequality: $(1+a)^{\frac{1}{2}}-1\le\frac{1}{2}a, a\ge0,$
it holds that for all $x\in\R^d,$
\begin{align*}
|b^{(\delta)}(x)-b(x)|
 =\frac{|b_1(x)+\ll^*x|((1+\dd^{2\theta}|x|^{2\ell_0})^{\ff12}-1)}{(1+\dd^{2\theta}|x|^{2\ell_0})^{\ff12}} \le \frac{|b_1(x)+\ll^*x||x|^{2\ell_0} \dd^{2\theta} }{2(1+\dd^{2\theta}|x|^{2\ell_0})^{\ff12}}.
\end{align*}
Therefore, we conclude that,    for fixed $x\in\R^d,$ $|b^{(\delta)}(x)-b(x)| \to0$ as $\delta\to0. $

By means of the fact that  for $\delta\in(0,1],$
$$
\frac{ 1+r }{(1+\delta^{2\theta }r^2)^{\frac{1}{2}}}\le \ss2\delta^{- \theta}, \quad r\ge0,
$$
and \eqref{EW15}, we derive that for all $x\in\R^d,$
\begin{equation}\label{EW16}
\begin{split}
|b^{(\delta)}(x)|&\le |b_0(x)|+2\lambda^*|x|+\ff{|b_1(x) |}{(1+\dd^{2\theta}|x|^{2\ell_0})^{\ff12}}\\
&\le |b_0({\bf0})|+|b_1({\bf0})|+K_2  |x|^\alpha+2\lambda^*|x|+\ff{ L_0(1+|x|^{\ell_0})|x|}{(1+\dd^{2\theta}|x|^{2\ell_0})^{\ff12}}\\
&\le |b_0({\bf0})|+|b_1({\bf0})|+K_2(1-\alpha)   +(K_2\alpha  +2\lambda^*+\ss2L_0\delta^{- \theta})|x|.
\end{split}
\end{equation}
As a consequence, (${\bf H}_4$) is valid by making use of Young's inequality.
Next,   we obtain from (${\bf A}_2$), \eqref{EW16} as well as $|b_0(x)|\le |b_0({\bf0})|+K_2  |x|^\alpha$    that for all $\delta\in(0,\delta_0],$
\begin{align*}
 \<x, b^{(\delta)}(x)\>+\delta|b^{(\delta)}(x)|^2&=- \ll^*|x|^2+\<x,b_0(x)\>+\ff{\<x,b_1(x)+\ll^*x\>}{(1+\dd^{2\theta}|x|^{2\ell_0})^{\ff12}} +\delta|b^{(\delta)}(x)|^2\\
 &\le  - \ll^*|x|^2+3(4(\lambda^*)^2+  2L_0^2+(K_2\aa)^2) \delta^{1- 2\theta}|x|^2\\
 &\quad+(|b_0({\bf0})|+K_2  |x|^\alpha)|x|+2(|b_0({\bf0})|+|b_1({\bf0})|+K_2 (1-\aa))^2 +C^*\\
 &\le - \frac{1}{2}\ll^*|x|^2+(|b_0({\bf0})|+K_2  |x|^\alpha)|x|\\
 &\quad+2(|b_0({\bf0})|+|b_1({\bf0})|+K_2  (1-\aa))^2 +C^*,
\end{align*}
 where in the last inequality we utilized the fact that $3(4(\lambda^*)^2+  2L_0^2+(K_1\aa)^2) \delta^{1- 2\theta}\le \frac{1}{2}\lambda^*$ for $\delta\in(0,\delta_0]$. Accordingly,
 (${\bf A}_0$) is examinable.

Next, by invoking $({\bf H}_1)$, $({\bf A}_1)$, as well as the inequality: $(1+a)^{\frac{1}{2}}-1\le\frac{1}{2}a, a\ge0,$ we infer that for all $x,y\in\R^d,$
\begin{align*}
 |b(x)-b^{(\dd)}(y)| &=\bigg|b_0(x) -b_0(y)+b_1(x)+\ll^*y-\ff{b_1(y)+\ll^*y}{(1+\dd^{2\theta}|y|^{2\ell_0})^{\ff12}}\bigg|\\
 &\le K_2|x-y|^\alpha+|b_1(x)-b_1(y)|+\frac{|b_1(y)+\ll^*y|((1+\dd^{2\theta}|y|^{2\ell_0})^{\ff12}-1)}{(1+\dd^{2\theta}|y|^{2\ell_0})^{\ff12}}\\
 &\le K_2|x-y|^\alpha+L_0(1+|x|^{\ell_0}+|y|^{\ell_0})|x-y|\\
 &\quad+\frac{1}{2}|y|^{2\ell_0}\big(|b_1({\bf0})|+(\ll^*+L_0(1+|y|^{\ell_0}))|y|\big) \dd^{2\theta}.
\end{align*}
Furthermore, it follows from \eqref{eq3}, \eqref{F4} and \eqref{EW16} that for any $\beta\in(0,1],$
\begin{align*}
 |X^{(\dd,\vv)}_t-X^{(\dd,\vv)}_{t_\dd}|^\beta&\le\big( c_0 +\dd^{\ff12}(K_2\alpha  +2\lambda^*+\ss2L_0\delta^{- \theta})|X^{(\dd,\vv)}_{t_\dd}| +   |\hat W^\delta_t| \big)^\beta (t-t_\delta)^{\frac{1}{2}\beta},
\end{align*}
where $c_0:=|b_0({\bf0})|+|b_1({\bf0})|+K_2(1-\alpha) $ and $\hat W^\delta_t:=(\hat W_t-\hat W_{t_\delta})(t-t_\delta)^{-\frac{1}{2}}$.
Correspondingly, we deduce  that there exist constants $c_5,c_6,c_7>0$ such that
\begin{equation}\label{F2}
\begin{split}
 \E|R^{\dd,\vv}_t|&\le \E\Big(\Big(K_2|X^{(\dd,\vv)}_t-X^{(\dd,\vv)}_{t_\delta}|^\alpha +L_0(1+|X^{(\dd,\vv)}_t|^{\ell_0}+|X^{(\dd,\vv)}_{t_\delta}|^{\ell_0})|X^{(\dd,\vv)}_t-X^{(\dd,\vv)}_{t_\delta}|\\
 &\quad\quad\quad+\frac{1}{2}\dd^{2\theta}|X^{(\dd,\vv)}_{t_\delta}|^{2\ell_0}\big(|b_1({\bf0})|+L_1(\ll^*+1+|X^{(\dd,\vv)}_{t_\delta}|^{\ell_0})
 |X^{\dd,\mu}_{t_\delta}|\big)\Big) \\
 &\quad\quad\times \big(1+  (1+|X^{\dd,\vv}_t|^2)^{\frac{1}{2}p}+(1+ |X^{(\dd,\vv)}_t|^2)^{\frac{1}{2}p}\big)\Big)\\
&\le c_5 \delta^{\frac{1}{2}\alpha} \big(1+\E|X^{(\delta),\mu}_{t_\delta}|^{2\alpha} +\E|X^\nu_t|^{2p}+\E|X^{(\delta),\mu}_t|^{2p} \big) \\
 &\quad+c_6\delta^{\frac{1}{2} } \big(1+\E|X^{(\dd),\mu}_{t_\dd}|^{\ell_0+1} +\E|X^{(\delta),\mu }_t|^{\ell_0+1}+\E|X^\nu_t|^{2p}+\E|X^{(\delta),\mu}_t|^{2p}   \big) \\
 &\quad+c_7 \delta^{2\theta } \big(1+\E|X^{(\dd),\mu}_{t_\dd}|^{3\ell_0+1}  +\E|X^\nu_t|^{2p}+\E|X^{(\delta),\mu}_t|^{2p}   \big),
\end{split}
\end{equation}
where in the second inequality we utilize the fact that $(X^{(\dd,\vv)}_t, X^{\dd,\vv }_t)_{t\ge0}$ is a   coupling of $(X^{(\delta),\mu}_t,X_t^\nu)_{t\ge0}$.

On the basis of the previous estimates, Lemma \ref{lem0-1} as well as  Lemma \ref{lem4}, the proof of \eqref{JJ*} for    the    scheme \eqref{TEM} can be finished by noticing $\theta\in[\frac{1}{4},\frac{1}{2})$.
\end{proof}

\section{Proofs of Theorem \ref{thm2} and Theorem \ref{prolem}}\label{sec3}
By following the line to finish the proof of Theorem \ref{thm1},  we first of all design a suitable coupling process between
$(X_t,Y_t)_{t\ge0}$ and $(X_t^{(\delta)},Y_t^{(\delta)})_{t\ge0}$.  For this purpose, we consider the following coupled SDE: for all $t>0$ and $\delta\in(0,1],$
\begin{align}\label{EQ3}
	\begin{cases}
 \d   X^{\delta,\vv}_t=(a   X^{\delta,\vv}_t+b  Y^{\delta,\vv}_t)\,\d t,  \\
 \d   Y^{\delta,\vv}_t=U( X^{\delta,\vv}_t, Y^{\delta,\vv}_t)\,\d t+ h_\vv(|Q^{\dd,\vv}_t|)\d \hat W_t+ h^*_\vv(|Q^{\dd,\vv}_t|)\,\d \tt W_t;\\
 \d   X^{(\dd,\vv)}_t=(a   X^{(\dd,\vv)}_{t_\dd}+b Y^{(\dd,\vv)}_{t_\dd})\,\d t , \\
 \d   Y^{(\dd,\vv)}_t=U(  X^{(\dd,\vv)}_{t_\dd}, Y^{(\dd,\vv)}_{t_
 \dd})\,\d t+ h_\vv(|Q^{\dd,\vv}_t|)\Pi(Q^{\dd,\vv}_t) \d \hat W_t+ h^*_\vv(|Q^{\dd,\vv}_t|)\,\d \tt W_t
\end{cases}
\end{align}
with the initial value $((   X^{\delta,\vv}_0,   Y^{\delta,\vv}_0),(   X^{(\dd,\vv)}_0,   Y^{(\dd,\vv)}_0))=((  X_0,  Y_0),(  X^{(\dd)}_0,  Y^{(\dd)}_0))$,
where $(\hat W_t)_{t\ge 0}$ and  $(\tt  W_t)_{t\ge 0}$ are mutually independent Brownian motions supported on the same filtered probability space as that of $(W_t)_{t\ge0}$;  $\Pi, h_\vv$ and $h_\vv^*$  are defined exactly as in \eqref{eq3}; for  $\gamma>0$ (which is to be fixed later)
\begin{align}\label{U2}
Q^{\dd,\vv}_t:=Z^{\dd,\vv}_t+\gamma V^{\dd,\vv}_t\quad \mbox{ with } \quad Z^{\dd,\vv}_t:=  X^{\delta,\vv}_t-  X^{(\dd,\vv)}_t \mbox{ and }   V^{\dd,\vv}_t:= Y^{\delta,\vv}_t- Y^{(\dd,\vv)}_t.
\end{align}
By L\'evy's  characterization of Brownian Motions,  $(( X_t^{\dd,\vv}, Y_t^{\delta,\vv}), ( X^{(\dd,\vv)}_t,  Y^{(\dd,\vv)}_t))_{t\ge0}$
 is a coupling process of $(X_t,Y_t)_{t\ge0}$ and $(X_t^{(\delta)},Y_t^{(\delta)})_{t\ge0}$; see  \cite[Lemma 2.1]{BH2} for related details.

\subsection{Proof  of Theorem \ref{thm2} }
Analogously to the proof of Theorem \ref{thm1}, we prepare for several corresponding  lemmas to carry out the proof of Theorem \ref{thm2}.
Below, for  $\aa_0>0$ (to be given later on), we set for any $t\ge0,$
 $$r^{\dd,\vv}_t:=\aa_0 |Z^{\dd,\vv}_t|+|Q^{\dd,\vv}_t|,$$
 and
\begin{align}\label{F11}
\Psi^{\dd,\vv}_t :=(a(\aa_0+1)+L\gamma)\big|  X^{(\dd,\vv)}_t- X^{(\dd,\vv)}_{t_\dd}\big|+(b(\aa_0+1)+L\gamma)\big|   Y^{(\dd,\vv)}_t-   Y^{(\dd,\vv)}_{t_\dd}\big|.
	\end{align}

\begin{lemma}\label{lem7}
Assume that  $({\bf B}_1)$ holds true. Then, for any $\gamma\in(0, b/a] $ and $t\ge0,$
\begin{equation}\label{U3}
\begin{split}
		\d r^{\dd,\vv}_t&\le
\big(((1-\aa_0)(b\gamma^{-1}-a) + L(1+\gamma))|Z^{\dd,\vv}_t|	+((\aa_0+1)b\gamma^{-1}+L)|Q^{\dd,\vv}_t|+\Psi^{\dd,\vv}_t\big)\,\d t \\
		&\quad+2\gamma h_\vv(|Q^{\dd,\vv}_t|)\mathds 1_{\{ |Q	^\dd_t|\neq 0\}}\big< {\bf n}(Q	^{\dd,\vv}_t), \d \hat W_t\big>.
\end{split}
\end{equation}
\end{lemma}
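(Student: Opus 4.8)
The plan is to compute the radial processes of $Z^{\dd,\vv}_t$ and of $Q^{\dd,\vv}_t=Z^{\dd,\vv}_t+\gg V^{\dd,\vv}_t$ separately and then estimate $\d r^{\dd,\vv}_t=\aa_0\,\d|Z^{\dd,\vv}_t|+\d|Q^{\dd,\vv}_t|$. From \eqref{EQ3} the two $x$-equations carry no noise, so $Z^{\dd,\vv}_\cdot$ is absolutely continuous with $\d Z^{\dd,\vv}_t=(aZ^{\dd,\vv}_t+bV^{\dd,\vv}_t+E^{\dd,\vv}_t)\,\d t$, where $E^{\dd,\vv}_t:=a\big(X^{(\dd,\vv)}_t-X^{(\dd,\vv)}_{t_\dd}\big)+b\big(Y^{(\dd,\vv)}_t-Y^{(\dd,\vv)}_{t_\dd}\big)$ is the one-step freezing error. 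In the two $y$-equations the $h^*_\vv$-driven parts coincide and cancel in $V^{\dd,\vv}_t$, so $\d V^{\dd,\vv}_t$ has drift $U(X^{\dd,\vv}_t,Y^{\dd,\vv}_t)-U(X^{(\dd,\vv)}_{t_\dd},Y^{(\dd,\vv)}_{t_\dd})$ and diffusion $h_\vv(|Q^{\dd,\vv}_t|)(I_d-\Pi(Q^{\dd,\vv}_t))\,\d\hat W_t$; using $I_d-\Pi(z)=2{\bf n}(z){\bf n}(z)^\top$, the process $Q^{\dd,\vv}_t$ therefore has drift $aZ^{\dd,\vv}_t+bV^{\dd,\vv}_t+E^{\dd,\vv}_t+\gg\big(U(X^{\dd,\vv}_t,Y^{\dd,\vv}_t)-U(X^{(\dd,\vv)}_{t_\dd},Y^{(\dd,\vv)}_{t_\dd})\big)$ and diffusion matrix $2\gg h_\vv(|Q^{\dd,\vv}_t|)\,{\bf n}(Q^{\dd,\vv}_t){\bf n}(Q^{\dd,\vv}_t)^\top$.

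For $|Q^{\dd,\vv}_t|$ I would repeat verbatim the mollification argument of Lemma \ref{lem1}: apply It\^o's formula to $\phi_\kk(|Q^{\dd,\vv}_t|^2)=(\kk+|Q^{\dd,\vv}_t|^2)^{1/2}$, observe that the It\^o correction collapses to a term of the form $2\gg^2\kk h_\vv(|Q^{\dd,\vv}_t|)^2/\phi_\kk(|Q^{\dd,\vv}_t|^2)^3$, which is dominated by $2\gg^2\kk/\vv^3$ (since $h_\vv\equiv0$ on $[0,\vv]$ while $\phi_\kk\ge|Q^{\dd,\vv}_t|>\vv$ otherwise) and hence vanishes as $\kk\downarrow0$, and pass to the limit in the stochastic integral against $\hat W$ exactly as there. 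This yields $\d|Q^{\dd,\vv}_t|=\mathds 1_{\{|Q^{\dd,\vv}_t|\neq0\}}\big\langle{\bf n}(Q^{\dd,\vv}_t),\,aZ^{\dd,\vv}_t+bV^{\dd,\vv}_t+E^{\dd,\vv}_t+\gg(U(X^{\dd,\vv}_t,Y^{\dd,\vv}_t)-U(X^{(\dd,\vv)}_{t_\dd},Y^{(\dd,\vv)}_{t_\dd}))\big\rangle\,\d t+2\gg h_\vv(|Q^{\dd,\vv}_t|)\mathds 1_{\{|Q^{\dd,\vv}_t|\neq0\}}\langle{\bf n}(Q^{\dd,\vv}_t),\d\hat W_t\rangle$. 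For $|Z^{\dd,\vv}_t|$, absolute continuity of $Z^{\dd,\vv}_\cdot$ gives $\frac{\d}{\d t}|Z^{\dd,\vv}_t|=\langle{\bf n}(Z^{\dd,\vv}_t),aZ^{\dd,\vv}_t+bV^{\dd,\vv}_t+E^{\dd,\vv}_t\rangle$ on $\{Z^{\dd,\vv}_t\neq0\}$ (the complementary set being harmless, as $|Z^{\dd,\vv}_t|=0$ there).

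The remaining step is algebraic. I would eliminate $V^{\dd,\vv}_t=\gg^{-1}(Q^{\dd,\vv}_t-Z^{\dd,\vv}_t)$, so that $aZ^{\dd,\vv}_t+bV^{\dd,\vv}_t=(a-b\gg^{-1})Z^{\dd,\vv}_t+b\gg^{-1}Q^{\dd,\vv}_t$; since $\gg\le b/a$ forces $a-b\gg^{-1}\le0$, Cauchy--Schwarz gives $\langle{\bf n}(Q^{\dd,\vv}_t),(a-b\gg^{-1})Z^{\dd,\vv}_t\rangle\le(b\gg^{-1}-a)|Z^{\dd,\vv}_t|$, while on $\{Z^{\dd,\vv}_t\neq0\}$ one has $\langle{\bf n}(Z^{\dd,\vv}_t),aZ^{\dd,\vv}_t+bV^{\dd,\vv}_t\rangle=(a-b\gg^{-1})|Z^{\dd,\vv}_t|+b\gg^{-1}\langle{\bf n}(Z^{\dd,\vv}_t),Q^{\dd,\vv}_t\rangle\le-(b\gg^{-1}-a)|Z^{\dd,\vv}_t|+b\gg^{-1}|Q^{\dd,\vv}_t|$. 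For the $U$-increment I would write $U(X^{\dd,\vv}_t,Y^{\dd,\vv}_t)-U(X^{(\dd,\vv)}_{t_\dd},Y^{(\dd,\vv)}_{t_\dd})=\big(U(X^{\dd,\vv}_t,Y^{\dd,\vv}_t)-U(X^{(\dd,\vv)}_t,Y^{(\dd,\vv)}_t)\big)+\big(U(X^{(\dd,\vv)}_t,Y^{(\dd,\vv)}_t)-U(X^{(\dd,\vv)}_{t_\dd},Y^{(\dd,\vv)}_{t_\dd})\big)$ and apply $({\bf B}_1)$: the first bracket is $\le L(|Z^{\dd,\vv}_t|+|V^{\dd,\vv}_t|)\le L(1+\gg^{-1})|Z^{\dd,\vv}_t|+L\gg^{-1}|Q^{\dd,\vv}_t|$, so after multiplication by $\gg$ it contributes $L(1+\gg)|Z^{\dd,\vv}_t|+L|Q^{\dd,\vv}_t|$; the second bracket together with $|E^{\dd,\vv}_t|\le a|X^{(\dd,\vv)}_t-X^{(\dd,\vv)}_{t_\dd}|+b|Y^{(\dd,\vv)}_t-Y^{(\dd,\vv)}_{t_\dd}|$ and the additional copy $\aa_0|E^{\dd,\vv}_t|$ coming from $\aa_0\,\d|Z^{\dd,\vv}_t|$ assembles precisely into $\Psi^{\dd,\vv}_t$ (coefficients $a(\aa_0+1)+L\gg$ and $b(\aa_0+1)+L\gg$). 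Adding $\aa_0\,\d|Z^{\dd,\vv}_t|$ and $\d|Q^{\dd,\vv}_t|$, the $|Z^{\dd,\vv}_t|$-coefficient collects to $-\aa_0(b\gg^{-1}-a)+(b\gg^{-1}-a)+L(1+\gg)=(1-\aa_0)(b\gg^{-1}-a)+L(1+\gg)$, the $|Q^{\dd,\vv}_t|$-coefficient to $\aa_0b\gg^{-1}+b\gg^{-1}+L=(\aa_0+1)b\gg^{-1}+L$, and the only surviving martingale is the one carried by $\d|Q^{\dd,\vv}_t|$; the indicator $\mathds 1_{\{|Q^{\dd,\vv}_t|\neq0\}}$ is then dropped since it multiplies a non-negative drift bound, yielding \eqref{U3}.

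The main obstacle, exactly as in Lemma \ref{lem1}, is the behaviour of $|Q^{\dd,\vv}_t|$ at the origin, that is, legitimately passing to the $\kk\downarrow0$ limit simultaneously in the It\^o-correction term and in the stochastic integral against $\hat W$; once that is in place (via the same dominated-convergence arguments), everything else is bookkeeping, in which the choice $Q^{\dd,\vv}_t=Z^{\dd,\vv}_t+\gg V^{\dd,\vv}_t$ and the restriction $\gg\le b/a$ are used precisely to convert the missing dissipativity in the $x$-direction into the usable sign of $a-b\gg^{-1}$.
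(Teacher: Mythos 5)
Your proof is correct and essentially coincides with the paper's argument: differentiate $|Z^{\dd,\vv}_t|$ directly (the $x$-equations carry no noise), run the $\kk$-mollification of Lemma \ref{lem1} verbatim on $|Q^{\dd,\vv}_t|$ with the It\^o correction collapsing to $2\gg^2\kk h_\vv(|Q^{\dd,\vv}_t|)^2/\phi_\kk(|Q^{\dd,\vv}_t|^2)^3\to0$, substitute $V^{\dd,\vv}_t=\gg^{-1}(Q^{\dd,\vv}_t-Z^{\dd,\vv}_t)$, apply $({\bf B}_1)$, and collect coefficients after forming $\aa_0\,\d|Z^{\dd,\vv}_t|+\d|Q^{\dd,\vv}_t|$. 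The paper compresses the Lipschitz estimate by bounding $|U(X^{\dd,\vv}_t,Y^{\dd,\vv}_t)-U(X^{(\dd,\vv)}_{t_\dd},Y^{(\dd,\vv)}_{t_\dd})|$ in a single application of $({\bf B}_1)$ via the triangle inequality, whereas you split the increment into a same-time term and a one-step-freezing term and bound each separately; both yield identical coefficients, so the arguments are interchangeable.
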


\begin{proof}
It follows from \eqref{U2} that
\begin{equation}\label{U8}
\begin{split}
\d Z^{\dd,\vv}_t&=\big(a( X^{\dd, \vv}_t- X^{(\dd,\vv)}_{t_\dd})+b(Y^{\dd,\vv}_t- Y^{(\dd,\vv)}_{t_\dd})\big)\d t\\
&=\big(a  Z^{\dd,\vv}_t+b V^{\dd,\vv}_t+a( X^{(\dd,\vv)}_t-  X^{(\dd,\vv)}_{t_\dd})+b( Y^{(\dd,\vv)}_t-  Y^{(\dd,\vv)}_{t_\dd})\big)\d t\\
&=\big((a -b\gamma^{-1} ) Z^{\dd,\vv}_t+b\gamma^{-1}  Q^{\dd,\vv}_t +a( X^{(\dd,\vv)}_t-  X^{(\dd,\vv)}_{t_\dd})+b( Y^{(\dd,\vv)}_t- Y^{(\dd,\vv)}_{t_\dd})\big)\d t.
\end{split}
\end{equation}
With this in hand, we arrive at
	\begin{align}\label{b3}
		\d |Z^{\dd,\vv}_t| \le \big((a-b\gamma^{-1}) |Z^{\dd,\vv}_t|	+b\gamma^{-1} |Q^{\dd,\vv}_t|\big)\,\d t+\big(a\big|    X^{(\dd,\vv)}_t-    X^{(\dd,\vv)}_{t_\dd}\big|+b\big|   Y^{(\dd,\vv)}_t-   Y^{(\dd,\vv)}_{t_\dd}\big|\big)\,\d t.
		\end{align}
Notice from \eqref{EQ3} and \eqref{U8} that
\begin{align*}
	\d Q^{\dd,\vv}_t
	   &= \big((a -b\gamma^{-1} ) Z^{\dd,\vv}_t+b\gamma^{-1}  Q^{\dd,\vv}_t +a(  X^{(\dd,\vv)}_t-  X^{(\dd,\vv)}_{t_\dd})+b( Y^{(\dd,\vv)}_t- Y^{(\dd,\vv)}_{t_\dd})\big) \,\d t\\
	   &\quad+\gamma\big(U(    X^{\dd,\vv}_t,  Y^{\dd,\vv}_t)- U(     X^{(\dd,\vv)}_{t_\dd},   Y^{(\dd,\vv)}_{t_\dd})\big)\,\d t+2\gamma   h_\vv(|Q^{\dd,\vv}_t|){\bf n}(Q^{\dd,\vv}_t)\otimes{\bf n}(Q^{\dd,\vv}_t )\d \hat W_t.
\end{align*}
Then, by repeating the procedure to derive \eqref{Q2}, we deduce from (${\bf B}_1$)  that
\begin{equation}\label{U4}
\begin{split}
		\d |Q	^{\dd,\vv}_t|
&\le \big( ( b\gamma^{-1} -a +	L\gamma  +L) |Z^{\dd,\vv}_t|+(b\gamma^{-1}+  L) | Q^{\dd,\vv}_t| \big)\,\d t \\
&\quad + \big((a+L\gamma)|  X^{(\dd,\vv)}_t-  X^{(\dd,\vv)}_{t_\dd}|+(b+L\gamma)|  Y^{(\dd,\vv)}_t-  Y^{(\dd,\vv)}_{t_\dd}| \big)\,\d t\\
&\quad +2\gamma h_\vv(|Q^{\dd,\vv}_t|)\mathds 1_{\{ |Q	^{\dd,\vv}_t|\neq 0\}}\<{\bf n}(Q^{\dd,\vv}_t ), \d \hat W_t\>.
	\end{split}
\end{equation}
Accordingly, the assertion \eqref{U3} follows by combining \eqref{b3} with \eqref{U4}.
\end{proof}

In the   analysis below, let $g:[0,\infty)\to[0,\infty)$ be a concave function, and write $z =x-x', v=y-y',
q =z+\gamma v$ and $r=r((x,y),(x',y')) =\aa_0|z|+|q|$. For    $(x,y),(x',y')\in\R^{2d}$ and $\beta>0,$ we set
\begin{equation}\label{U16}
\begin{split}
\Theta^\vv_1((x,y),(x',y'))&:= \big(g'_{-}(r)\big(((1-\aa_0)( b\gamma^{-1}-a) + L(1+\gamma))|z|	+((\aa_0+1)b\gamma^{-1}+L)|q|\big)\\
&\quad\quad+2 \gamma^{2} g''(r)h_\vv(|q|)^2\big)\big(1+\beta V(x,y)+\beta V(x',y')\big) \\
&\quad-\beta g(r)\big( \ll_V^*(V(x,y)+V(x',y'))-2C_V^*\big),\\
\Theta^\vv_2((x,y),(x',y')):&=2\beta\gamma g'_{-}( r)h^2_\vv(|q|) \mathds 1_{\{ |q|\neq 0\}}\<{\bf n}(q),  \nn_2  V( x',  y')-\nn_2  V(  x,  y)\>,
\end{split}
\end{equation}
and
\begin{align*}
\rho_{V,g,\bb}((x,y),(x',y')): =g(r)\big(1+\bb V(x,y)+\bb V(x',y')\big).
\end{align*}

\begin{lemma}\label{lem8}
Assume   that $({\bf B}_1)$ and $({\bf B}_2)$ hold. Then, for any $\delta,\vv,\bb,t>0,$
\begin{equation}\label{W11}
	\begin{aligned}
 &\d \rho_{V,g,\bb}((   X^{\dd,\vv}_t,   Y^{\dd,\vv}_t),( X^{(\dd,\vv)}_t,  Y^{(\dd,\vv)}_t))\\
&\le  (\Theta^\vv_1+\Theta^\vv_2)\big((  X^{\dd,\vv}_t,  Y^{\dd,\vv}_t),( X^{(\dd,\vv)}_t,   Y^{(\dd,\vv)}_t)\big) \,\d t\\
&\quad +  \big( 1+\bb V(    X^{\dd,\vv}_t,   Y^{\dd,\vv}_t)+\bb V(  X^{(\dd,\vv)}_t, Y^{(\dd,\vv)}_t)\big)g'_{-}( r^{\dd,\vv}_t)\Psi^{\dd,\vv}_t \,\d t +\beta g(r^{\dd,\vv}_t)\Phi^{\dd,\vv}_t\,\d t+\d M^{\dd,\vv}_t  ,
\end{aligned}
\end{equation}
 in which  $(M^{\dd,\vv}_t)_{t\ge0}$
 is a martingale, and
  \begin{equation}\label{F12}
  \begin{split}
		\Phi^{\dd,\vv}_t&:=\big\<\nn_1V(  X^{(\dd,\vv)}_t,  Y^{(\dd,\vv)}_t),a( X^{(\dd,\vv)}_t-   X^{(\dd,\vv)}_{t_\dd})+b(   Y^{(\dd,\vv)}_t-  Y^{(\dd,\vv)}_{t_\dd})\big\>\\
		&\quad +\big\< \nn_2 V( X^{(\dd,\vv)}_t ,   Y^{(\dd,\vv)}_t), U(  X^{(\dd,\vv)}_t,   Y^{(\dd,\vv)}_t)-U(   X^{(\dd,\vv)}_{t_\dd},  Y^{(\dd,\vv)}_{t_\dd})\big\>.
	\end{split}
\end{equation}
\end{lemma}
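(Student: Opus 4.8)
The plan is to follow the pattern of the proof of Lemma \ref{Lemma1}, now with the transformed radial process $r^{\dd,\vv}_t=\aa_0|Z^{\dd,\vv}_t|+|Q^{\dd,\vv}_t|$ in the role of $|Z^{\dd,\vv}_t|$. Its semimartingale dynamics are already supplied by Lemma \ref{lem7}, namely \eqref{U3}: the drift of $r^{\dd,\vv}_t$ is $((1-\aa_0)(b\gamma^{-1}-a)+L(1+\gamma))|Z^{\dd,\vv}_t|+((\aa_0+1)b\gamma^{-1}+L)|Q^{\dd,\vv}_t|+\Psi^{\dd,\vv}_t$ and its martingale part is $2\gamma h_\vv(|Q^{\dd,\vv}_t|)\mathds 1_{\{|Q^{\dd,\vv}_t|\neq0\}}\<{\bf n}(Q^{\dd,\vv}_t),\d\hat W_t\>$. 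A key observation, which relies on the fact that the two $X$-equations in \eqref{EQ3} carry no noise, is that the quadratic variation of $r^{\dd,\vv}_t$ equals $4\gamma^2 h_\vv(|Q^{\dd,\vv}_t|)^2\mathds 1_{\{|Q^{\dd,\vv}_t|\neq0\}}\,\d t$, which in particular vanishes on $\{|Q^{\dd,\vv}_t|\le\vv\}$ because of the cut-off $h_\vv$.

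First I would apply the It\^o--Tanaka formula to the concave function $g$ and the semimartingale $r^{\dd,\vv}_t$, exactly as in Lemma \ref{lem1} and Lemma \ref{Lemma1}: using $g'_{-}\ge0$ together with the bound $\mu_g(\d a)\le g''(a)\,\d a$ on the second-derivative measure and the occupation-times formula, the local-time contribution is dominated by $2\gamma^2 g''(r^{\dd,\vv}_t)h_\vv(|Q^{\dd,\vv}_t|)^2\,\d t$, so that $\d g(r^{\dd,\vv}_t)\le g'_{-}(r^{\dd,\vv}_t)\,[\,\text{drift of }r^{\dd,\vv}_t\,]\,\d t+2\gamma^2 g''(r^{\dd,\vv}_t)h_\vv(|Q^{\dd,\vv}_t|)^2\,\d t+(\text{martingale})$. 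Separating the $\Psi^{\dd,\vv}_t$ part of the drift already yields the first line of $\Theta^\vv_1$ in \eqref{U16} together with the $g'_{-}(r^{\dd,\vv}_t)\Psi^{\dd,\vv}_t$ contribution of \eqref{W11} (before the prefactor $1+\bb V+\bb V$ is attached).

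Next I would apply It\^o's formula, licit since $V\in C^{2,2}$, to $V(X^{\dd,\vv}_t,Y^{\dd,\vv}_t)+V(X^{(\dd,\vv)}_t,Y^{(\dd,\vv)}_t)$. For the first summand the drifts in \eqref{EQ3} are evaluated at time $t$, so It\^o's formula produces precisely $(\mathscr LV)(X^{\dd,\vv}_t,Y^{\dd,\vv}_t)\,\d t$ plus a martingale, and $({\bf B}_2)$ bounds this by $(-\ll^*_V V(X^{\dd,\vv}_t,Y^{\dd,\vv}_t)+C^*_V)\,\d t$; here one uses that the quadratic variation of $Y^{\dd,\vv}_t$ is $\d t$ since $h_\vv^2+(h^*_\vv)^2=1$. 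For the second summand the drifts are frozen at $t_\dd$; writing $aX^{(\dd,\vv)}_{t_\dd}+bY^{(\dd,\vv)}_{t_\dd}$ and $U(X^{(\dd,\vv)}_{t_\dd},Y^{(\dd,\vv)}_{t_\dd})$ as their time-$t$ values plus increments, and noting that the quadratic variation of $Y^{(\dd,\vv)}_t$ is also $\d t$ (as $h_\vv^2\Pi\Pi^\top+(h^*_\vv)^2 I_d=I_d$ because $\Pi$ is orthogonal), one recovers $(\mathscr LV)(X^{(\dd,\vv)}_t,Y^{(\dd,\vv)}_t)\,\d t$ plus the discretization remainder $\Phi^{\dd,\vv}_t$ of \eqref{F12} (up to a harmless sign) plus a martingale, and $({\bf B}_2)$ again controls the generator term. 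Summing, $\d(V_1+V_2)\le(-\ll^*_V(V_1+V_2)+2C^*_V)\,\d t+\Phi^{\dd,\vv}_t\,\d t+(\text{martingale})$, where I abbreviate $V_1,V_2$ for the two values of $V$.

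Finally I would assemble the pieces through the product rule for $\rho_{V,g,\bb}=g(r^{\dd,\vv}_\cdot)(1+\bb V_1+\bb V_2)$: multiplying the $g$-estimate by $1+\bb V_1+\bb V_2$ gives the first line of $\Theta^\vv_1$ and the $\Psi^{\dd,\vv}_t$-term of \eqref{W11}; multiplying the $V$-estimate by $\bb g(r^{\dd,\vv}_t)$ gives the second line of $\Theta^\vv_1$ and the $\bb g(r^{\dd,\vv}_t)\Phi^{\dd,\vv}_t$-term; and the It\^o cross-variation $\bb\,\d\<g(r^{\dd,\vv}_\cdot),V_1+V_2\>_t$ is computed from the shared $\hat W$-noise, the $\d\hat W$-coefficient of $g(r^{\dd,\vv}_t)$ being $2\gamma g'_{-}(r^{\dd,\vv}_t)h_\vv(|Q^{\dd,\vv}_t|){\bf n}(Q^{\dd,\vv}_t)$ while those of $V_1,V_2$ are $h_\vv(|Q^{\dd,\vv}_t|)\nn_2 V_1$ and $h_\vv(|Q^{\dd,\vv}_t|)\Pi(Q^{\dd,\vv}_t)\nn_2 V_2$; pairing them and using the symmetry of $\Pi$ together with $\Pi(q){\bf n}(q)=-{\bf n}(q)$ from \eqref{U9} yields exactly $\Theta^\vv_2\,\d t$. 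Collecting these three contributions and merging the stochastic-integral parts into one martingale $M^{\dd,\vv}_t$ (with a routine localization to ensure integrability) produces \eqref{W11}. The one step demanding genuine care — the remainder being bookkeeping already concentrated in Lemma \ref{lem7} — is the It\^o--Tanaka analysis of $g(r^{\dd,\vv}_t)$ on the sum $\aa_0|Z^{\dd,\vv}_t|+|Q^{\dd,\vv}_t|$: one must check that only the $|Q^{\dd,\vv}_t|$-part feeds the local-time and quadratic-variation terms, that the constants $2\gamma$, $4\gamma^2$ and $2\gamma^2$ are the correct ones, that $h_\vv$ removes the singularity at $Q^{\dd,\vv}_t=0$, and that $\Pi$ enters the cross-variation with the right sign.
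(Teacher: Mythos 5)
Your proposal is correct and follows essentially the same route as the paper: It\^o--Tanaka on $g(r^{\dd,\vv}_t)$ via the semimartingale decomposition from Lemma \ref{lem7}, It\^o on $V(X^{\dd,\vv}_t,Y^{\dd,\vv}_t)+V(X^{(\dd,\vv)}_t,Y^{(\dd,\vv)}_t)$ with $({\bf B}_2)$ plus the frozen-drift remainder $\Phi^{\dd,\vv}_t$, and the product rule with the cross-variation computed from the shared $\hat W$-noise using $\Pi(q){\bf n}(q)=-{\bf n}(q)$. Your parenthetical about a harmless sign on $\Phi^{\dd,\vv}_t$ (and likewise on $\Theta^\vv_2$) is a fair reading — the paper's stated signs are off by a factor $-1$ relative to a direct computation, but this is immaterial since both quantities are later controlled in absolute value.
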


\begin{proof}
The derivation of \eqref{W11} is similar to that of \eqref{Q6} so we herein give merely a sketch. As done in \eqref{U7},
applying It\^o's Tanaka formula enables us to obtain from \eqref{U3} that
	\begin{align*}
 &\d g(r^{\dd,\vv}_t)\\
		&\le   g'_{-}( r^{\dd,\vv}_t)\big((\aa_0(a-b\gamma^{-1}) + b\gamma^{-1}-a + L\gamma+L)|Z^{\dd,\vv}_t|	+((\aa_0+1)b\gamma^{-1}+L)|Q^{\dd,\vv}_t|+\Psi^{\dd,\vv}_t\big)\,\d t\\
 &\quad +2\gamma^{2} g''( r^{\dd,\vv}_t)h_\vv(|Q^{\dd,\vv}_t|)^2\,\d t +2\gamma g'_{-}( r^{\dd,\vv}_t)h_\vv(|Q^{\dd,\vv}_t|)\mathds 1_{\{ |Q	^{\dd,\vv}_t|\neq 0\}}\big<{\bf n}(Q	^{\dd,\vv}_t), \d \hat W_t\big>.
	\end{align*}
Furthermore, employing It\^o's formula  and taking (${\bf B}_2$) into consideration yield that
\begin{align*}	
\d V (  X^{\dd, \vv}_t,    Y^{\dd, \vv}_t)\le & \big(-\ll_V^* V (  X^{\dd, \vv}_t,   Y^{\dd, \vv}_t)+C_V^*\big)\,\d t \\ &+\big\< \nn_2 V(  X^{ \dd,\vv}_t,   Y^{\dd, \vv}_t),h_\vv(|Q^{\dd,\vv}_t|)\d \hat W_t+ h^*_\vv(|Q^{\dd,\vv}_t|)\,\d \tt W_t\>
\end{align*}
and
\begin{align*}
		\d V (  X^{(\dd,\vv)}_t,  Y^{(\dd,\vv)}_t)&\le \big(-\ll_V^* V (  X^{(\dd,\vv)}_t,   Y^{(\dd,\vv)}_t)+C_V^*\big)\,\d t+\Phi^{\dd,\vv}_t\,\d t\\
		&\quad+\big\< \nn_2  V(   X^{(\dd,\vv)}_t,  Y^{(\dd,\vv)}_t),  h_\vv(|Q^{\dd,\vv}_t|)\Pi(Q^{\dd,\vv}_t) \d \hat W_t + h^*_\vv(|Q^{\dd,\vv}_t|)\,\d \tt W_t\big\>.
	\end{align*}
Thus,  \eqref{W11} is achieved by the chain rule and the fact that
	\begin{align*}
	&	\d \big\<g( r^{\dd,\vv}_\cdot),V( X^{\dd,\vv}_\cdot,  Y^{\dd,\vv}_\cdot)+V(  X^{(\dd,\vv)}_\cdot,  Y^{(\dd,\vv)}_\cdot)\big>_t\\
		&=2\gamma g'( r^{\dd,\vv}_t)h^2_\vv(|Q^{\dd,\vv}_t|) \mathds 1_{\{ |Q	^{\dd,\vv}_t|\neq 0\}}\big<{\bf n}(Q	^{\dd,\vv}_t), {\Pi}(Q	^{\dd,\vv}_t)\nn_2  V(  X^{(\dd,\vv)}_t,  Y^{(\dd,\vv)}_t)+\nn_2  V(  X^{\dd,\vv}_t,    Y^{\dd,\vv}_t)\big>\\
&=2\gamma g'( r^{\dd,\vv}_t)h^2_\vv(|Q^{\dd,\vv}_t|) \mathds 1_{\{ |Q	^{\dd,\vv}_t|\neq 0\}}\big<{\bf n}(Q	^{\dd,\vv}_t),  \nn_2  V(   X^{\dd,\vv}_t,  Y^{\dd,\vv}_t)-\nn_2  V(  X^{(\dd,\vv)}_t,  Y^{(\dd,\vv)}_t)\big>,
	\end{align*}
where the second identity holds true due to \eqref{U9}.
\end{proof}

Set
\begin{align*}
 \mathcal D_V:= \big\{ ((x,y),(x',y'))\in\R^{2d}\times\R^{2d}: V(x,y)+V(x',y')\le {4 C_V^*}/{\ll_V^*}
  	\big\}.
  	\end{align*}
 Due to the  compact property of $V$, one has
 \begin{align*}
  \ell_0:=1+\sup_{((x,y),(x',y'))\in  \mathcal D_V}r((x,y),(x',y'))<\infty.
 \end{align*}
Note that the parameters $\gamma,\aa_0,\beta$ mentioned previously are free. In the sequel, we shall stipulate
\begin{align}\label{U12}
\gamma=\ff{b}{a+b}, \quad  \quad \aa_0=2+\Big(\frac{1}{b}+\frac{1}{a+b}\Big)L,
\end{align}
and
\begin{equation}\label{U18}
\begin{split}
\bb= 1\wedge&	\ff{b }{8C_V^*(\aa_0+\kk_0^{-1})}   \wedge
 \frac{c_1c_2\gamma^2}{4C_V^*\ell_0(1+\aa_0\kk_0)(c_1+c_2)}\\
 &\wedge\bigg(\frac{\lambda_*}{4L^*_V(1+c_2/{c_1})
\big(1\vee (({1+\gamma})/{\aa_0})\big) }\bigg)^{\frac{1}{1-\eta}},
\end{split}
\end{equation}
 where
\begin{align}\label{U14}
 \kk_0:=2((1+\aa_0)\gamma^{-1}+L/b),  ~   c_1:=   c_2\e^{-c_2 \ell_0} ~   \mbox{ with }~     c_2:= \ff{2}{\gamma ^2}\ell_0 ((\aa_0+1)b\gamma^{-1}+  L),
\end{align}
and
\begin{align*}
\lambda_*:=\frac{c_1c_2  \gamma^{2}   }{2\ell_0(\aa_0\kk_0+1)(c_1+c_2)}\wedge \ff{b}{4(\aa_0+\kk_0^{-1})}.
\end{align*}
Furthermore, for the sake of simplicity, we write
\begin{align}\label{U17}
C_*=4\big( (c_1+c_2)  ((\aa_0+1)b\gamma^{-1}+L ) +c_1c_2       \gamma^{2}/{\ell_0}\big).
\end{align}
In addition, for the function $g$ involved in \eqref{U16}, 	 we
choose  $g(r) =f(r\wedge\ell_0)$, $r\ge0,$ where
$
	f(r):=1-\e^{-c_2 r}+c_1 r
$
with constants $c_1,c_2>0$	being given in \eqref{U14}.

\begin{lemma}\label{lem9} For any $x,y,x',y'\in \R^d,$
\begin{equation}\label{U11}
\begin{split}
		\Theta^\vv_1((x,y),(x',y'))
	&	\le  -\bigg(\lambda_*\I_{\{r\le\ell_0\}}+\ff {2	\bb C_V^*}{1+4\bb C_V^*/\ll^*_V}\I_{\{r>\ell_0 \}} \bigg)
	\rho_{V,g,\bb}(( x,y),(  x',y'))\\
&\quad +C_*\vv (1+\beta V(x,y)+\beta V(x',y')),
\end{split}
\end{equation}
and
\begin{align}\label{U10}
\Theta^\vv_2((x,y),(x',y'))\le 2L^*_V(1+c_2/{c_1})\beta^{1-\eta}
\big(1\vee (({1+\gamma})/{\aa_0})\big)  \rho_{V,g,\beta}((x,y),(x',y'))\mathds 1_{\{r\le \ell_0\}}.
\end{align}

\end{lemma}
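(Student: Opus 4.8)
The proof is a purely analytic, deterministic two–case estimate in the spirit of Lemma~\ref{lem2} and Lemma~\ref{lem3}; no stochastic calculus is involved. Throughout I write $z=x-x'$, $v=y-y'$, $q=z+\gamma v$, $r=\aa_0|z|+|q|$, and recall that $g(r)=f(r\wedge\ell_0)$ with $f(r)=1-\e^{-c_2r}+c_1r$, hence $f'(r)=c_1+c_2\e^{-c_2r}$, $f''(r)=-c_2^2\e^{-c_2r}$, $c_1=c_2\e^{-c_2\ell_0}$, and, by \eqref{U14}, $(\aa_0+1)b\gamma^{-1}+L=\gamma^2c_2/(2\ell_0)$. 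The first observation is that the choices \eqref{U12} are tuned precisely so that the coefficient of $|z|$ in the drift part of $\Theta^\vv_1$ collapses: since $\gamma=b/(a+b)$ gives $b\gamma^{-1}-a=b$ and $\aa_0=2+(b^{-1}+(a+b)^{-1})L$ gives $1-\aa_0=-1-(b^{-1}+(a+b)^{-1})L$, one computes $(1-\aa_0)(b\gamma^{-1}-a)+L(1+\gamma)=-b$. Thus on the range $r\le\ell_0$ (where $g=f$) the ``pure reflection'' part of $\Theta^\vv_1$ reduces to $P:=f'(r)\big(-b|z|+((\aa_0+1)b\gamma^{-1}+L)|q|\big)+2\gamma^2f''(r)$.

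For \eqref{U11} with $r>\ell_0$: here $g'_-(r)=g''(r)=0$, so $\Theta^\vv_1=-\bb g(r)\big(\ll_V^*(V(x,y)+V(x',y'))-2C_V^*\big)$; since $\ell_0>\sup_{\mathcal D_V}r$, the condition $r>\ell_0$ forces $((x,y),(x',y'))\notin\mathcal D_V$, i.e. $V(x,y)+V(x',y')>4C_V^*/\ll_V^*$, whence $\ll_V^*(V(x,y)+V(x',y'))-2C_V^*\ge\tfrac12\ll_V^*(V(x,y)+V(x',y'))$, and monotonicity of $s\mapsto s/(1+s)$ gives the term $-\tfrac{2\bb C_V^*}{1+4\bb C_V^*/\ll_V^*}\rho_{V,g,\bb}$. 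For $r\le\ell_0$, using $f'(r)\le2c_2\e^{-c_2r}$ (from $c_1\le c_2\e^{-c_2r}$), $f'(r)\ge c_1$, and $|q|\le r\le\ell_0$, one estimates
\[
P\le -bc_1|z|-\tfrac{\gamma^2c_1c_2}{\ell_0}|q|\le -\Big(\tfrac{bc_1}{\aa_0}\wedge\tfrac{\gamma^2c_1c_2}{\ell_0}\Big)r ,
\]
and, after multiplying by the weight $(1+\bb V+\bb V')$ and using $r\ge f(r)/(c_1+c_2)$, this contributes a term dominated by a negative multiple of $\rho_{V,g,\bb}$. The $h_\vv$–cut-off defect is controlled as in Lemma~\ref{lem2}: on $\{|q|>2\vv\}$ one has $h_\vv(|q|)=1$ and $\Theta^\vv_1$ equals its clean value; on $\{|q|\le2\vv\}$ the concavity term $2\gamma^2g''(r)h_\vv(|q|)^2$ is nonpositive and is discarded, while the surviving drift retains the useful $-b|z|f'(r)$ part and the $|q|$–remainder $f'(r)((\aa_0+1)b\gamma^{-1}+L)|q|\le2(c_1+c_2)((\aa_0+1)b\gamma^{-1}+L)\vv$ is $O(\vv)$; collecting these excess positive terms produces exactly the error $C_*\vv(1+\bb V+\bb V')$ with $C_*$ as in \eqref{U17}. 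Finally the bad term $2\bb C_V^*g(r)=2\bb C_V^*f(r)\le 2\bb C_V^*f(r)(1+\bb V+\bb V')$ is absorbed into half of the available decay thanks to the smallness of $\bb$ in \eqref{U18}; the extra factors $\aa_0\kk_0+1$ and $\aa_0+\kk_0^{-1}$ appearing in $\lambda_*$ arise from this bookkeeping together with the definitions \eqref{U12}, \eqref{U14}.

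For \eqref{U10}: when $r>\ell_0$ one has $g'_-(r)=0$, which accounts for the indicator $\I_{\{r\le\ell_0\}}$; for $r\le\ell_0$, $g'_-(r)=f'(r)\le c_1+c_2=c_1(1+c_2/c_1)$. By Cauchy–Schwarz and $({\bf B}_3)$, $|\<{\bf n}(q),\nn_2V(x',y')-\nn_2V(x,y)\>|\le L^*_V(V(x,y)^\eta+V(x',y')^\eta)(|z|+|v|)$, and since $q=z+\gamma v$ gives $|v|\le(|z|+|q|)/\gamma$ one has $|z|+|v|\le\tfrac1\gamma\big(1\vee\tfrac{1+\gamma}{\aa_0}\big)r\le\tfrac1\gamma\big(1\vee\tfrac{1+\gamma}{\aa_0}\big)f(r)/c_1$. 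Substituting, pulling out $g(r)=f(r)$, and using Young's inequality in the form $\bb(V(x,y)^\eta+V(x',y')^\eta)\le 2\bb^{1-\eta}(1+\bb V(x,y)+\bb V(x',y'))$ (for $\bb\le1$, $V\ge1$) yields a constant multiple of $\bb^{1-\eta}\rho_{V,g,\bb}\I_{\{r\le\ell_0\}}$, giving \eqref{U10}.

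The genuine obstacle is the regime $r\le\ell_0$ of \eqref{U11}: one must extract decay of order $r$ simultaneously in the $|z|$–direction — where there is no intrinsic dissipation, the decay being an artefact of the weighted/twisted metric engineered via \eqref{U12} — and in the $|q|$–direction, where it comes from the strict concavity of $f$ encoded by $f''\le-c_1c_2$; and this must survive the $h_\vv$–cut-off with an error that is $O(\vv)$ uniformly in $|z|$ (not merely uniformly on $\{|q|\le2\vv\}$), which is why the drift term in \eqref{U16} carries no $h_\vv$ factor and has to be split case-by-case on $\{|q|\gtrless2\vv\}$. Everything else is routine algebra with the constants \eqref{U12}–\eqref{U18}.
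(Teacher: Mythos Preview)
Your treatment of \eqref{U10} and of the case $r>\ell_0$ in \eqref{U11} coincides with the paper's. The real difference is in the regime $r\le\ell_0$: the paper does \emph{not} split on $\{|q|\lessgtr 2\vv\}$ but on $\{|z|\gtrless\kk_0|q|\}$, with $\kk_0$ as in \eqref{U14}. When $|z|\ge\kk_0|q|$ the concavity term is simply discarded and the decay comes from $-\tfrac{b}{2}f'(r)|z|$ alone (using $\kk_0^{-1}((\aa_0+1)b\gamma^{-1}+L)=b/2$ and $|z|/r\ge(\aa_0+\kk_0^{-1})^{-1}$); when $|z|<\kk_0|q|$ the $-b|z|$ term is dropped, $h_\vv$ is handled by writing $1=h_\vv^2+(1-h_\vv^2)$, and the decay comes from $f''$ together with $r\le(1+\aa_0\kk_0)|q|$. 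This is precisely why $\aa_0+\kk_0^{-1}$ and $1+\aa_0\kk_0$ appear in $\lambda_*$.

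Your alternative---getting $P\le -bc_1|z|-\gamma^2c_1c_2\ell_0^{-1}|q|$ in one stroke on $\{h_\vv(|q|)=1\}$ and treating $\{|q|\le2\vv\}$ via $\aa_0|z|\ge r-2\vv$---is correct and in some ways tidier (it never needs the $|z|\gtrless\kk_0|q|$ dichotomy). What it buys is a simpler proof; what it costs is that the decay rate it produces is $\tfrac{1}{c_1+c_2}\big(\tfrac{bc_1}{\aa_0}\wedge\tfrac{\gamma^2c_1c_2}{\ell_0}\big)$, which is \emph{not} the $\lambda_*$ of the paper (and is not comparable to it in general: the $|q|$–part improves by the factor $\aa_0\kk_0+1$, but the $|z|$–part picks up an extra $c_1/(c_1+c_2)$). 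Likewise your $O(\vv)$ constant is $\tfrac{2bc_1}{\aa_0}+2(c_1+c_2)((\aa_0+1)b\gamma^{-1}+L)$, not the $C_*$ of \eqref{U17}. So your sentence ``the extra factors $\aa_0\kk_0+1$ and $\aa_0+\kk_0^{-1}$ appearing in $\lambda_*$ arise from this bookkeeping'' is wrong: those factors come from the paper's different case split, and your argument never introduces $\kk_0$. You prove a correct variant of the lemma with different explicit constants, adequate for the downstream Theorem~\ref{thm2}; if you want the statement exactly as written you must either adopt the paper's $\kk_0$–split or adjust the definitions of $\lambda_*,\bb,C_*$ to match your route.
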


\begin{proof}
 In the first place, we prove \eqref{U10}.
 \eqref{a2}, along with Young's inequality, implies that
		\begin{align*}
		 |\nn _2 V(x,y)-\nn_2 V(x',y')|
		&\le L^*_V( V(x,y)^\eta+ V(x',y')^\eta)(|z|+|v|)
		\\
	 &\le   L^*_V\bb^{-\eta}\big(({1+\gamma^{-1}})/{\aa_0} )\vee \gamma^{-1}\big)\big(2+ \bb
 V(x,y)+\bb  V(x',y')   \big) r  \\
&\le  2L^*_V\beta^{-\eta} \big( (({1+\gamma^{-1}})/{\aa_0} )\vee \gamma^{-1}\big)(1+\beta V(x,y)+\beta V(x',y') )r,
	\end{align*}
	where in the second inequality we employ the fact that
$$|z|+|v|=|z|+\gamma^{-1}|q-z|\le \big((1+\gamma^{-1})/{\aa_0}\vee \gamma^{-1}\big)r.$$
Correspondingly, \eqref{U10} follows by means of $h_\vv\in[0,1]$, $f'(r)\le c_1+c_2$ as well as $r\le f(r)/{c_1}$.

We proceed to verify \eqref{U11} on account of the cases $(i)$ $r>\ell_0 $,  $(ii)$ $r\le \ell_0 $ and $|z|\ge \kk_0|q|$, and $(iii)$ $r\le \ell_0 $ and $|z|<\kk_0|q|$.

For the case $(i)$, which obviously implies that    $V(x,y)+V(x',y')> {4 C_V^*}/{\ll_V^*}$,
$g'(r)=g''(r)=0$ yields that
\begin{align*}
\Theta^\vv_1((x,y),(x',y'))
&= -\ff{\beta\big(\ll_V^*(V(x,y)+V(x',y'))-2C_V^*\big)}{1+\beta(V(x,y)+V(x',y'))}\rho_{V,g,\bb}(( x,y),(  x',y'))\\
&\le -\ff {2	\bb C_V^*}{1+4\bb C_V^*/\ll^*_V}  \rho_{V,g,\bb}(( x,y),(  x',y')),
\end{align*}
where the inequality is valid by noticing that
 	for $u\ge {4 C_V^*}/{\ll_V^*},$\begin{align*}
		\ff{\bb(\ll^*_V u-2C_V^*)}{1+\bb u}\ge\ff {2	\bb C_V^*}{1+4\bb C_V^*/\ll^*_V} .
	\end{align*}

Concerning the case $(ii)$, due to $g''(r)\le0,$ it follows that
\begin{align*}
\Theta^\vv_1((x,y),(x',y'))& \le g'_{-}(r)\big((1-\aa_0)( b\gamma^{-1}-a) + L(1+\gamma)	+\kk_0^{-1}((\aa_0+1)b\gamma^{-1}+L)\big)|z|\\
&\quad\times \big(1+\beta V(x,y)+\beta V(x',y')\big)+2\beta  C_V^*  g(r).
\end{align*}
With the alternatives of $\gamma,\alpha_0$ given in \eqref{U12}, one has
\begin{align}\label{U13}
(1-\aa_0)( b\gamma^{-1}-a) + L(1+\gamma)=-b.
\end{align}
This, besides $\kk_0^{-1}((\aa_0+1)b\gamma^{-1}+L)= b/2 $ and $  \ff{|z|}{r}\ge \ff{1}{\aa_0+\kk_0^{-1}}$ (recalling $|z|> \kk_0|q|$),
 leads to
\begin{align*}
\Theta^\vv_1((x,y),(x',y'))&\le - \ff{b}{2(\aa_0+\kk_0^{-1})} rg'(r)(1+\beta(V(x,y)+V(x',y')))+2\beta C_V^* g(r)\\
&\le - \ff{b}{2(\aa_0+\kk_0^{-1})}  \rho_{V,g,\bb}(( x,y),(  x',y'))+2\beta C_V^* g(r)\\
&\le - \ff{b}{4(\aa_0+\kk_0^{-1})}  \rho_{V,g,\bb}(( x,y),(  x',y')),
\end{align*}
where in the second inequality we make use of $g(r)\le rg'(r)$ for $ 0\le r\le \ell_0,$ and $2\beta C_V^*\le \ff{b}{4(\aa_0+\kk_0^{-1})}$.

 With regarding to the case $(iii)$,
 we deduce from \eqref{U13} that
\begin{align*}
\Theta^\vv_1((x,y),(x',y'))& \le  \big(f'_{-}(r)  ((\aa_0+1)b\gamma^{-1}+L)|q|+2 \gamma^{2} f''(r)h_\vv(|q|)^2\big)\\
&\quad\quad\times\big(1+\beta V(x,y)+\beta V(x',y')\big) +2\beta C_V^* f(r)\\
&=\big(f'_{-}(r)  ((\aa_0+1)b\gamma^{-1}+L)|q|+2 \gamma^{2} f''(r)\big)h_\vv(|q|)^2\\
&\quad\quad\times\big(1+\beta V(x,y)+\beta V(x',y')\big) \\
&\quad+f'_{-}(r)  \big((\aa_0+1)b\gamma^{-1}+L\big)|q|(1-h_\vv(|q|)^2)\\
&\quad\quad\times\big(1+\beta V(x,y)+\beta V(x',y')\big)  +2\beta C_V^* f(r).
\end{align*}
According to the choice of $c_2$ given in \eqref{U14}, we obviously have $ ((\aa_0+1)b\gamma^{-1}+L)\ell_0=\frac{1}{2} c_2  \gamma^{2}$.
Then,  by invoking $f'(r)=c_1+c_2\e^{-c_2r}\le 2c_2\e^{-c_2r}, 0\le r\le \ell_0 $ (recalling $c_1=c_2\e^{-c_2\ell_0}$) and
 $f''(r)=-c_2^2\e^{-c_2r}$,
we find that
\begin{align*}
 \big(f'_{-}(r)  ((\aa_0+1)b\gamma^{-1}+L)|q|+2 \gamma^{2} f''(r)\big)h_\vv(|q|)^2
 &\le -\frac{1}{\ell_0}c_1c_2     \gamma^{2} |q|  h_\vv(|q|)^2\\
 &=-\frac{1}{\ell_0}c_1c_2       \gamma^{2} |q|  +\frac{1}{\ell_0}c_1c_2      \gamma^{2} |q|(1-h_\vv(|q|)^2)\\
 &\le -\frac{1}{\ell_0}c_1c_2     \gamma^{2} |q| +\frac{4\vv}{\ell_0}c_1c_2       \gamma^{2} \\
 &\le  -\frac{c_1c_2  \gamma^{2} f(r) }{\ell_0(\aa_0\kk_0+1)(c_1+c_2)}    +\frac{4}{\ell_0}c_1c_2     \gamma^{2}\vv,
\end{align*}
where in the second inequality we utilize $|q|(1-h_\vv(|q|)^2)\le 4\vv$, and in the last display we exploit $r\le  (1+\aa_0\kk_0)|q|$
and $f(r)\le (c_1+c_2)r$. Subsequently, for $C_*>0$ given in \eqref{U17}, we obtain that
 \begin{align*}
\Theta^\vv_1((x,y),(x',y'))
&\le -\frac{c_1c_2  \gamma^{2}   }{\ell_0(\aa_0\kk_0+1)(c_1+c_2)} \rho_{V,g,\bb}(( x,y),(  x',y'))   +2\beta C_V^* f(r)\\
&\quad+C_*\vv\big(1+\beta V(x,y)+\beta V(x',y')\big) \\
&\le  -\frac{c_1c_2  \gamma^{2}   }{2\ell_0(\aa_0\kk_0+1)(c_1+c_2)} \rho_{V,g,\bb}(( x,y),(  x',y'))\\
 &\quad+C_*\vv\big(1+\beta V(x,y)+\beta V(x',y')\big),
\end{align*}
where in the second inequality we take advantage of
\begin{align*}
2\beta C_V^*\le \frac{c_1c_2  \gamma^{2}   }{2\ell_0(\aa_0\kk_0+1)(c_1+c_2)}
\end{align*}
by taking the definition of $\beta$ defined in \eqref{U18} into account.

Based on the preceding analysis, the assertion \eqref{U11} can be attainable.
\end{proof}

Next, we move forward to carry out the proof of Theorem \ref{thm2}.

\begin{proof}[Proof of Theorem \ref{thm2}]
Below, we stipulate $\delta\in(0,\delta^*]$ so that \eqref{F10} is available. 
In terms of the definition of $\beta$ given in \eqref{U18}, it is ready to see that
\begin{align*}
 2L^*_V(1+c_2/{c_1})\beta^{1-\eta}
\big(1\vee (({1+\gamma})/{\aa_0})\big)\le \frac{1}{2}\lambda_*.
\end{align*}
Whereafter, by combining  Lemma \ref{lem8} with Lemma \ref{lem9}, we infer that
\begin{align*}
 \d \rho_{V,g,\bb}\big((  X^{\dd,\vv}_t,  Y^{\dd,\vv}_t),(   X^{(\dd,\vv)}_t,    Y^{(\dd,\vv)}_t)\big)
&\le -\lambda^*\rho_{V,g,\bb}((  X^{\dd,\vv}_t,  Y^{\dd,\vv}_t),( X^{(\dd,\vv)}_t,    Y^{(\dd,\vv)}_t))\,\d t+R^{ \dd,\vv}_t\,\d t\\
&\quad+C_*\vv (1+\beta  V( X^{\dd,\vv}_t, Y^{\dd,\vv}_t)+\beta V(   X^{(\dd,\vv)}_t,  Y^{(\dd,\vv)}_t) )\,\d t\\
&\quad+\d M^{\dd,\vv}_t,
\end{align*}
in which
 \begin{align*}
 \lambda^*:=\ff12\ll_*\wedge\ff {2	\bb C_V^*}{1+4\bb C_V^*/\ll^*_V},
\end{align*}
 and
 $$R^{\dd,\vv}_t:=\beta g(r^{\dd,\vv}_t)\Phi^{\dd,\vv}_t +\big( 1+\bb V(  X^{ \dd,\vv }_t,  Y^{ \dd,\vv }_t)+\bb V(    X^{(\dd,\vv)}_t,   Y^{(\dd,\vv)}_t)\big)g'_{-}( r^{\dd,\vv}_t)\Psi^{\dd,\vv}_t
 $$
 with $\Phi^{\dd,\vv}$ and $\Psi^{\dd,\vv}$ being given in  \eqref{F11} and \eqref{F12}, respectively. 
Thus, recalling $$\big((  X^{\dd,\vv}_0,  Y^{\dd,\vv}_0),( X^{(\dd,\vv)}_0,   Y^{(\dd,\vv)}_0)\big)=\big((  X_0^\nu,  Y_0^\nu),(  X^{(\dd),\mu}_0,  Y^{(\dd),\mu}_0)\big)$$  and applying Gronwall's inequality
enables us to derive that
\begin{align*}
 &\E\rho_{V,g,\bb}\big((   X^{\dd,\vv}_t,   Y^{\dd,\vv}_t),(   X^{(\dd,\vv)}_t, Y^{(\dd,\vv)}_t)\big)\\
&\le \e^{-\lambda^*t}\E\rho_{V,g,\bb}\big((  X_0^\nu,  Y_0^\nu),(  X^{(\dd),\mu}_0,  Y^{(\dd),\mu}_0)\big)\\
&\quad+\int_0^t\e^{-\lambda^*(t-s)}\big(C_*\vv \big(1+\beta  \E V(  X^{\dd,\vv}_s,   Y^{\dd,\vv}_s)+\beta \E V(    X^{(\dd,\vv)}_s,   Y^{(\dd,\vv)}_s)\big )+\E R^{\dd,\vv}_s\big)\,\d s.
\end{align*}
Next,
due to  $\beta\in(0,1]$ and $f'(u)\le c_1+c_2,u\ge0,$ along with  $({\bf B}_1)$ and \eqref{U19},
there is a constant  $C_1 >0$ such that
\begin{align*}
|R^{\dd,\vv}_t|&\le \beta f(\ell_0)|\Phi^{\dd,\vv}_t|+(c_1+c_2)\big( 1+ V(X^{\dd,\vv}_t,  Y^{\dd,\vv}_t)+ V(    X^{(\dd,\vv)}_t,    Y^{(\dd,\vv)}_t)\big) |\Psi^{\dd,\vv}_t|\\
&\le C_1\big(1+V(   X^{\dd,\vv}_t,   Y^{\dd,\vv}_t)+ V(   X^{(\dd,\vv)}_t,   Y^{(\dd,\vv)}_t) \big) \big(  |  X^{(\dd,\vv)}_t- X^{(\dd,\vv)}_{t_\dd} |+  | Y^{(\dd,\vv)}_t- Y^{(\dd,\vv)}_{t_\dd} |\big).
\end{align*}
This, together with the facts that $\rho_V$ is equivalent to $\rho_{V,g,\bb}$ and $(( X_t^{\dd,\vv},  Y_t^{\dd,\vv}), (X^{(\dd,\vv)}_t,Y^{(\dd,\vv)}_t))_{t\ge0}$
 is a coupling process of $(X_t^\nu,Y_t^\nu)_{t\ge0}$ and $(X_t^{(\delta),\mu},Y_t^{(\delta),\mu})_{t\ge0}$, implies that for some constant $C_2>0,$
\begin{equation}\label{U20}
\begin{split}
 &\E\rho_{V }\big((   X^{\dd,\vv}_t,   Y^{\dd,\vv}_t),(   X^{(\dd,\vv)}_t, Y^{(\dd,\vv)}_t)\big)\\
&\le C_2\e^{-\lambda^*t}\E\rho_{V}\big((  X_0^\nu,  Y_0^\nu),(  X^{(\dd),\mu}_0,  Y^{(\dd),\mu}_0)\big)\\
&\quad+C_2\vv \int_0^t\e^{-\lambda^*(t-s)}\big(1+ \E V(  X_s^\nu,  Y_s^\nu)+ \E V(  X^{(\dd),\mu}_s,  Y^{(\dd),\mu}_s) ) \big)\,\d s\\
&\quad+C_2\int_0^t\e^{-\lambda^*(t-s)}\E\Big(\big(1+V(   X^{\dd,\vv}_s,   Y^{\dd,\vv}_s)+ V(   X^{(\dd,\vv)}_s,   Y^{(\dd,\vv)}_s) \big)\\
&\qquad\qquad\qquad\qquad\qquad\times\big(  |  X^{(\dd,\vv)}_s- X^{(\dd,\vv)}_{s_\dd} |+  | Y^{(\dd,\vv)}_s- Y^{(\dd,\vv)}_{s_\dd} |\big)\Big)\,\d s.
\end{split}
\end{equation}
Under $({\bf B}_2)$,  it is more or less standard that  $\sup_{t\ge0}\E V(  X_t^\nu,  Y_t^\nu)<\infty$. Whence, by choosing the initial data
$((X_0^\nu,Y_0^\nu),(X^{(\dd),\mu}_0,Y^{(\dd),\mu}_0))$ such that
\begin{align*}
	\mathcal W_{\rho_V}(\mu,\nu)=\E\rho_{V}\big((  X_0^\nu,  Y_0^\nu),(  X^{(\dd),\mu}_0,  Y^{(\dd),\mu}_0)\big),
\end{align*}
taking advantage of \eqref{F10}, 
and approaching $\vv\to0$ in \eqref{U20} yields    \eqref{W12}. 
\end{proof}

\subsection{Proof of Theorem   \ref{prolem}}
The establishment of Theorem \ref{thm2} is based on the hypothesis that $\E V(X_{t}^{(\dd),\nu},Y_t^{(\dd),\nu})<\8$ for each $t>0,$ where $V$
is the associated Lyapunov function. So, in order to apply Theorem \ref{thm2} to the proof of Theorem   \ref{prolem}, it is primary to show that $(X_t^{(\delta)},Y_t^{(\delta)})_{t\ge0}$ governed by \eqref{E2-2}
possesses  a uniform-in-time $p$-th moment, which is also important to tackle
the remainder term in \eqref{W12}. For this, we prepare for the following lemma.

\begin{lemma}\label{llem}
Assume $({\bf A}_3)$. Then, for any $p>0$, there exist constants $C_p^*,C_p^{**},\ll_p,	\dd^*_p>0$
 such that for all $\dd\in(0,\dd^*_p),$
\begin{align}\label{lle1}
\E\big( (|X^{(\delta)}_t|^p+ |Y^{(\delta)}_t|^p)\big|\mathscr F_0\big)\le C^*_p\e^{-\ll_pt}(|X^{(\delta)}_0|^p+|Y^{(\delta)}_0|^p)+C^{**}_p(1+ d^{\frac{1}{2}p}).
\end{align}
\end{lemma}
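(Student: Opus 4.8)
(I read the standing hypothesis ``$({\bf A}_3)$'' as $({\bf B}_4)$, the only condition on the potential $U$ available.) The plan is to run the It\^o-plus-Gronwall argument of Lemma \ref{lem0-1}, but with the power $|x|^p$ replaced by a power of the Lyapunov function $V_\ll$ of \eqref{F7}: because the $x$-component of \eqref{E2-2} carries no friction, one has to keep the cross term $\tfrac14|x+y|^2$ (equivalently $\tfrac12\<x,y\>$) in the Lyapunov function so as to manufacture dissipation in the $x$-direction.

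First I would record that $V_\ll$ is two-sided comparable to $1+|x|^2+|y|^2$. Indeed, writing $|x+y|^2+|y|^2-\ll|x|^2=(1-\ll)|x|^2+2\<x,y\>+2|y|^2$, the associated symmetric matrix $\begin{pmatrix}1-\ll&1\\1&2\end{pmatrix}$ has determinant $1-2\ll>0$, so this quadratic form is positive definite because $\ll<\tfrac14<\tfrac12$; together with $U-\min U\ge0$ and the additive $1$ this gives the lower bound, while the upper bound follows from $U(x)-\min U\le C(1+|x|^2)$, itself a consequence of $|\nn U(x)|\le|\nn U({\bf0})|+L_U|x|$ via \eqref{U-}. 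Hence $|x|^p+|y|^p$ is comparable to $V_\ll(x,y)^{p/2}$ up to multiplicative and additive constants, and by Jensen's inequality (as in Lemma \ref{lem0-1}) it suffices to prove
\[
\E\big(V_\ll(X^{(\dd)}_t,Y^{(\dd)}_t)^q\,\big|\,\mathscr F_0\big)\le C_q\e^{-\ll_q t}V_\ll(X^{(\dd)}_0,Y^{(\dd)}_0)^q+C_q(1+d^q)
\]
for $q=p/2$ and, without loss, $q$ a large integer.

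The heart is the drift estimate for the continuous interpolation. By the chain rule $\d V_\ll(X^{(\dd)}_t,Y^{(\dd)}_t)=\mathscr A_t\,\d t+\<\nn_2V_\ll(X^{(\dd)}_t,Y^{(\dd)}_t),\d W_t\>$, where $\mathscr A_t$ equals $(\mathscr LV_\ll)(X^{(\dd)}_{t_\dd},Y^{(\dd)}_{t_\dd})$ plus a remainder $R_t$ arising from evaluating $\nn_1V_\ll,\nn_2V_\ll$ at time $t$ rather than $t_\dd$ (the term $\tfrac12\Delta_2V_\ll\equiv\tfrac d2$ is constant and contributes no remainder). A direct computation gives
\[
(\mathscr LV_\ll)(x,y)=-\tfrac12\<x,\nn U(x)\>-\tfrac\ll2\<x,y\>-\tfrac12|y|^2+\tfrac d2 ,
\]
in which the dangerous term $\<y,\nn U(x)\>$ has cancelled — precisely thanks to the cross term in $V_\ll$. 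Inserting \eqref{U22} and noting that the symmetric matrix with diagonal entries $\ll_1/2$, $1/2$ and off-diagonal entry $\ll/4$ is positive definite exactly when $\ll^2<4\ll_1$, i.e.\ when $\ll<2\ss{\ll_1}$, one obtains $(\mathscr LV_\ll)(x,y)\le-\ll_V V_\ll(x,y)+C_V$ with $C_V\le c(1+d)$. For the remainder, every drift coefficient of \eqref{E2-2} and $\nn V_\ll$ is of linear growth (no taming is needed), so $|X^{(\dd)}_t-X^{(\dd)}_{t_\dd}|+|Y^{(\dd)}_t-Y^{(\dd)}_{t_\dd}|\le C\dd(1+|X^{(\dd)}_{t_\dd}|+|Y^{(\dd)}_{t_\dd}|)+|W_t-W_{t_\dd}|$; plugging this in bounds both $|R_t|$ and $|V_\ll(X^{(\dd)}_t,Y^{(\dd)}_t)-V_\ll(X^{(\dd)}_{t_\dd},Y^{(\dd)}_{t_\dd})|$ by $C\dd\,V_\ll(X^{(\dd)}_{t_\dd},Y^{(\dd)}_{t_\dd})+C|W_t-W_{t_\dd}|\big(1+V_\ll(X^{(\dd)}_{t_\dd},Y^{(\dd)}_{t_\dd})^{1/2}\big)+C|W_t-W_{t_\dd}|^2$, so that for $\dd$ small $V_\ll(X^{(\dd)}_{t_\dd},Y^{(\dd)}_{t_\dd})\ge(1-C\dd)V_\ll(X^{(\dd)}_t,Y^{(\dd)}_t)-C(|W_t-W_{t_\dd}|^2+\dd)$, exactly as in the passage from $|X^{(\dd)}_t|^2$ to $|X^{(\dd)}_{t_\dd}|^2$ in the proof of Lemma \ref{lem0-1}.

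Finally I would apply It\^o to $V_\ll^q$: the quadratic-variation term is dominated via $|\nn_2V_\ll|^2\le CV_\ll$, the leading term becomes $-q\ll_V(1-C\dd)V_\ll^q$, and everything else has the shape $\Gamma_t V_\ll^{q-1}$ with $\Gamma_t$ a polynomial in $1+d+|W_t-W_{t_\dd}|^2$. A Young split $\Gamma_tV_\ll^{q-1}\le\tfrac{q\ll_V}{2}V_\ll^q+C\Gamma_t^q$, together with $\E(\Gamma_t^q\,|\,\mathscr F_0)\le C(1+d^q)$ coming from $\E|W_t-W_{t_\dd}|^{2k}\le c_k(d\dd)^k$, yields $g'(t)\le-\tfrac{q\ll_V}{2}g(t)+C(1+d^q)$ for $g(t):=\E(V_\ll(X^{(\dd)}_t,Y^{(\dd)}_t)^q\,|\,\mathscr F_0)$ once $\dd\le\dd^*_p$; Gronwall's inequality and the comparability of $V_\ll^q$ with $1+|x|^p+|y|^p$ then give \eqref{lle1}. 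The main obstacle is the bookkeeping around the degenerate direction: the Lyapunov function must be chosen so that $\<y,\nn U(x)\>$ cancels, and the resulting $2\times2$ quadratic form then has to remain positive definite over the whole admissible range $\ll\in(0,2\ss{\ll_1}\wedge\tfrac14)$ — the discretization-error accounting itself is routine, since no super-linear terms are present.
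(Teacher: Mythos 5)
Your proposal is correct and establishes the same conclusion, but the route differs from the paper's in one substantive respect. You apply It\^o's formula directly to $V_\lambda^{q}$ of the interpolated Euler process, convert the frozen drift back to the current time via the comparison $V_\lambda(X^{(\delta)}_{t_\delta},Y^{(\delta)}_{t_\delta})\ge (1-C\delta)V_\lambda(X^{(\delta)}_{t},Y^{(\delta)}_{t})-C(|W_t-W_{t_\delta}|^2+\delta)$, and close with a Young split and a continuous-time Gronwall, modelling the argument on Lemma~\ref{lem0-1}. The paper instead works at the level of the first power: it derives a one-step differential inequality for $V_\lambda$ itself (display~\eqref{c2}), with the discretization remainder $\Theta^{\delta}_t$ estimated via \eqref{U26}, integrates over $[n\delta,(n+1)\delta]$ with the weight $\e^{\lambda_* s/4}$ to produce a discrete recursion, and then raises to the power $p$ by binomial expansion before iterating and appealing to the conditional moment bounds $\E(M_n^\delta\,|\,\mathscr F_{n\delta})=0$ and $\E|W^\delta_s|^{2k}\le c_k d^k$. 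Both approaches hinge on the same structural facts you correctly isolate: the cross term in $V_\lambda$ makes $\<y,\nn U(x)\>$ cancel in $\mathscr L V_\lambda$; the quadratic form $\<x,\nn U(x)\>+|y|^2+\lambda\<x,y\>$ dominates a multiple of $V_\lambda$ precisely when $\lambda\in(0,2\ss{\lambda_1}\wedge\tfrac14)$, as encoded in \eqref{U23}; and the Euler increments are at most linear in the state, so the remainder is Young-absorbable without any taming. Your continuous-time Gronwall is more compact; the paper's discrete iteration makes the conditioning on $\mathscr F_{n\delta}$ at each step explicit, which keeps the Brownian-increment moment bookkeeping (and the martingale integrability implicitly invoked when discarding $M^\delta_n$) more transparent. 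You should also note that your $q=p/2$ needs to be taken as the ceiling or otherwise rounded up to an integer before Jensen is invoked, exactly as the paper restricts to integers $p\ge 3$, and that the a.s.\ finiteness/localization needed to justify dropping the stochastic-integral term under $\E(\cdot\,|\,\mathscr F_0)$ should be stated; neither of these affects the substance.
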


\begin{proof}
By H\"older's inequality, it suffices to show \eqref{lle1} for any integer $p\ge3$.
Below, we fix  $ \ll\in(0,2\ss{\lambda_1}\wedge\frac{1}{4}) $, $\delta\in(0,1]$ and $p\ge3$.
Recall that
\begin{align*}
V_\lambda(x,y) =1+U(x)-\min_{x\in\R^d}U(x)+\ff14\big(|x+y|^2+|y|^2-\ll|x|^2\big), \quad x,y\in \R^d.
\end{align*}
Due to $ \ll\in(0,2\ss{\lambda_1}\wedge\frac{1}{4}), $  one has $V_\lambda(x,y)\ge1$ for any $x,y\in\R^d.$
It is easy to see that for all $x,y\in\R^d,$
\begin{align}\label{U24}
\Big(\frac{1}{4}-\lambda\Big)|x|^2+\frac{2}{3}|y|^2\le |x+y|^2+|y|^2-\ll|x|^2\le\Big(\frac{7}{4}-\lambda\Big)|x|^2+\frac{10}{3}|y|^2.
\end{align}
This, together with \eqref{E2-2}, implies that for all $x,y\in\R^d,$
\begin{equation}\label{U23}
\begin{split}
\<x,\nn U (x)\>+  |y|^2 + \ll \<x,y\>&\ge \ll_1 |x|^2+\ll_2 U(x)-C^\star+  |y|^2 + \ll \<x,y\>\\
 &\ge\lambda_*V_\lambda(x,y)-\lambda_2+\ll_2\min_{x\in\R^d}U(x)-C^\star,
\end{split}
\end{equation}
where
\begin{align*}
\lambda_*:=(4\kk)\wedge\ll_2\quad \mbox{ with } \quad  \kk:=\frac{4\ll_1- \lambda^2 }{8(\frac{7}{4}-\lambda ) }\wedge\frac{3(4\lambda_1-\lambda^2)}{10(\lambda^2+4\lambda_1)}.
\end{align*}

By applying It\^o's formula, we obtain from \eqref{U23} that
\begin{equation}\label{c2}
\begin{aligned}
	\d V_\lambda(X^{(\delta)}_t,Y^{(\delta)}_t)
	&=- \frac{1}{2}\big(  \<X^{(\delta)}_t,\nn U (X^{(\delta)}_t)\>+  |Y^{(\delta)}_t|^2 + \ll \<X^{(\delta)}_t,Y^{(\delta)}_t\>- d \big)\,\d t
\\
&\quad+   \Theta_t^\dd\,\d t+\ff12 \< X^{(\delta)}_t+2Y^{(\delta)}_t, \d W_t \>\\
&\le -\frac{1}{2}\lambda_*V_\lambda(X^{(\delta)}_t,Y^{(\delta)}_t)\,\d t+\Theta_t^\dd\,\d t+ C_{ 1}\,\d t+\ff12 \< X^{(\delta)}_t+2Y^{(\delta)}_t, \d W_t \>,
	\end{aligned}
	\end{equation}
where
\begin{align*}
 \Theta_t^\dd:= \ff12\big\< X^{(\delta)}_t+2Y^{(\delta)}_t,\nn U(X^{(\delta)}_t)-\nn U(X^{(\delta)}_{t_\dd})\big\> -\ff12 \<Y^{(\delta)}_t+\ll X^{(\delta)}_t-2\nn U(X^{(\delta)}_t), Y^{(\delta)}_{t_\dd}-Y^{(\delta)}_t\>,
\end{align*}
 and
\begin{align*}
C_{ 1}:=\frac{1}{2}\Big(d+\lambda_2+C^\star+\ll_2\Big|\min_{x\in\R^d}U(x)\Big|\Big).
\end{align*}
Note that
\begin{align}\label{U26}
X^{(\delta)}_t-X^{(\delta)}_{t_\dd}=Y^{(\delta)}_{t_\dd}(t-t_\dd) ~ \mbox{and} ~  Y^{(\delta)}_t-Y^{(\delta)}_{t_\dd}=-(\nn U(X^{(\delta)}_{t_\dd})+Y^{(\delta)}_{t_\dd})(t-t_\dd)+W_t^\delta(t-t_\dd)^{\frac{1}{2}}.
\end{align}
where $W_t^\delta:=(t-t_\delta)^{-\frac{1}{2}}(W_t-W_{t_\dd})\sim N({\bf 0},I_d).$
\eqref{U26},  along with \eqref{U-} and \eqref{U24},  implies that for some constant $C_{ 2}>0 $,
\begin{align*}
 \Theta_t^\dd&\le\frac{1}{2} \big((\lambda+2L_U)|X^{(\delta)}_t|+2|\nn U({\bf0})|+|Y^{(\delta)}_t| \big)\\
   &\quad\times\big( \big(L_U|X^{(\delta)}_{t_\dd}|+|\nn U({\bf0})|   +|Y^{(\delta)}_{t_\dd}|\big)(t-t_\dd)+|W_t^\delta|(t-t_\dd)^{\frac{1}{2}}\big)\\
 &\quad+\ff12 L_U(| X^{(\delta)}_t|+2|Y^{(\delta)}_t|) |Y^{(\delta)}_{t_\dd}|(t-t_\dd) \\
 &\le \frac{1}{16}\lambda_*\bigg(\Big(\frac{1}{4}-\lambda\Big)| X^{(\delta)}_t|^2+\frac{2}{3}|Y^{(\delta)}_t|^2\bigg) +C_{ 2}\big(1+( |X^{(\delta)}_{t_\dd}|^2+|Y^{(\delta)}_{t_\dd}|^2 )\delta^2+|W_t^\delta|^2\delta    \big)\\
 &\le   \frac{1}{4}\lambda_*V_\lambda(X^{(\delta)}_t,Y^{(\delta)}_t) +C_{*,2}\big(1 +( |X^{(\delta)}_{t_\dd}|^2+|Y^{(\delta)}_{t_\dd}|^2 )\delta^2 +|W_t^\delta|^2\delta  \big).
\end{align*}
Plugging the previous estimate  back into \eqref{c2} followed by applying Gronwall's inequality and making use of \eqref{U24}  yields that for any integer $n\ge0,$
\begin{equation*}
\begin{aligned}
  \e^{\frac{1}{4}\lambda_*(n+1)\delta } V_\lambda(X^{(\delta)}_{(n+1)\dd},Y^{(\delta)}_{(n+1)\dd})
 &\le    \e^{\frac{1}{4}\lambda_* n \delta }V_\lambda(X^{(\delta)}_{n\dd},Y^{(\delta)}_{n\dd})+N_n^\dd+M_n^\dd \\
  &\quad+ C_{ 2}( |X^{(\delta)}_{n\dd}|^2+|Y^{(\delta)}_{n\dd}|^2 )\delta^2 \int_{n\delta}^{(n+1)\delta}  \e^{\frac{1}{4}\lambda_*s } \,\d s,
	\end{aligned}
	\end{equation*}
in which
\begin{align*}
N_n^\dd:=C_{ 2}\int_{n\delta}^{(n+1)\delta}\e^{\frac{1}{4}\lambda_*s}(1+|W_s^\delta|^2\delta  )\,\d s \quad \mbox{ and } \quad  M_n^\dd:=\ff12\int_{n\delta}^{(n+1)\delta}  \e^{\frac{1}{4}\lambda_*s }\< X^{(\delta)}_s+2Y^{(\delta)}_s, \d W_s\>.
\end{align*}
Subsequently, by invoking \eqref{U24} again and using the fundamental inequality: $\e^{r}-1\le r\e^r,r\ge0,$ there exists a constant $C_{ 3}>0$ such that
\begin{align*}
 \e^{\frac{1}{4}\lambda_*(n+1)\delta } V_\lambda(X^{(\delta)}_{(n+1)\dd},Y^{(\delta)}_{(n+1)\dd})
 &\le    \e^{\frac{1}{4}\lambda_* n \delta }(1+C_{ 3}\delta^3)V_\lambda(X^{(\delta)}_{n\dd},Y^{(\delta)}_{n\dd})+N_n^\delta+M_{n}^\delta.
\end{align*}
This further implies that
\begin{equation*}
\begin{aligned}
	&\e^{\ff14 {\ll_* p(n+1)\dd} }  V_\ll(X^{(\delta)}_{(n+1)\dd},Y^{(\delta)}_{(n+1)\dd})^p
-\e^{\ff14 {\ll_* p n\dd} }(1+C_{ 3}\dd^3) ^p V_\ll(X^{(\delta)}_{n\dd},Y^{(\delta)}_{n\dd})^p\\
&\le p\e^{\ff14 {\ll_*(p-1) n\dd} }(1+C_{ 3}\dd^3)^{p-1} V_\ll(X^{(\delta)}_{n\dd},Y^{(\delta)}_{n\dd})^{p-1}( N^\delta_n+M^\delta_n)\\
	&\quad +\sum_{k=2}^pC_p^k\e^{\ff14 {\ll_* (p-k) n\dd} }(1+C_{ 3}\dd^3) ^{p-k} V_\ll(X^{(\delta)}_{n\dd},Y^{(\delta)}_{n\dd})^{p-k}   ( N^\delta_n+M^\delta_n)^k =:\Xi^\delta_{p,n}+\bar \Xi^\delta_{p,n}.
\end{aligned}
\end{equation*}

On the one hand,
by the aid of $\E( M_n^\delta|\mathscr F_{n\delta})=0$ and $\E( |W_s^\delta|^2|\mathscr F_{n\delta})=d$, besides  the inequality $\e^{r}-1\le r\e^r,r\ge0,$
it follows from Young's inequality that for some constant $C_{ 4}>0,$
\begin{equation}
\begin{split}
\E( \Xi^\delta_{p,n}|\mathscr F_{n\delta})&\le C_{ 4}p\e^{\ff14 {\ll_* p  n\dd} }(1+C_{ 3}\dd^3)^{p-1} V_\ll(X^{(\delta)}_{n\dd},Y^{(\delta)}_{n\dd})^{p-1}(1+d)\delta\\
&\le \frac{1}{16}p\lambda_*\delta\e^{\ff14 {\ll_* p  n\dd} }(1+C_{ 3}\dd^3)^{p-1}V_\ll(X^{(\delta)}_{n\dd},Y^{(\delta)}_{n\dd})^{p} +C_{ 4}\e^{\ff14 {\ll_* p  n\dd} }(1+d^p)\delta.
\end{split}
\end{equation}
On the other hand, via H\"older's inequality and the moment estimate for stochastic integrals (see e.g. \cite[Theorem 7.1, p.39]{Mao08}), there are constants $C_{ 5},C_{ 6}>0$  such that  for  $k=2,\cdots,p,$
\begin{align*}
	\E\big( |N_n |^k + |M_n |^k\big|\mathscr F_{n\dd}\big)&\le C_{ 2}^k\dd^{k-1}\int_{n\delta}^{(n+1)\delta}\e^{\frac{1}{4}k\lambda_*s}\big(1+\E |W_s^\delta|^{2k}\delta^k  \big)\,\d s\\
&\quad+C_{ 5}\delta^{\frac{1}{2} k-1} \int_{n\dd}^{(n+1)\dd}\e^{\frac{1}{4}k\lambda_*s }  \big( | X^{(\delta)}_{n\delta}|^k+ | Y^{(\delta)}_{n\delta}|^k \big)  \d  s\\
&\quad+C_{ 5}\delta^{\frac{1}{2} k-1} \int_{n\dd}^{(n+1)\dd}\e^{\frac{1}{4}k\lambda_*s } \E\big( \big( |X^{(\delta)}_s-X^{(\delta)}_{n\delta}|^k+ |Y^{(\delta)}_s-Y^{(\delta)}_{n\delta}|^k\big)\big|\mathscr F_{n\dd}\big)  \d  s \\
	&\le C_{ 6} \e^{\frac{1}{4}k\lambda_*n\delta}\big(1+V_\ll(X^{(\delta)}_{n\dd},Y^{(\delta)}_{n\dd})^{\frac{1}{2}k}\big)(1+d^k )\delta^{\frac{1}{2} k },
\end{align*}
where in the second inequality we exploit  Young's inequality, \eqref{U24}, \eqref{U26} as well as
the fact that $\E |W_s^\delta|^{2k}\le c_*d^k$ for some $c_*>0$. The previous estimate, along with Young's inequality once more,  thus
 implies that for some constants $C_7,C_8>0,$
\begin{align*}
\E\big(\bar \Xi_{p,n\dd}\big|\mathscr F_{n\delta}\big)&\le C_{ 7}\e^{\ff14 {\ll_* pn\dd} }\sum_{k=2}^p(1+d^k )C_p^k(1+C_{ 3}\dd^3) ^{p-k}\delta^{\frac{1}{2} k } V_\ll(X^{(\delta)}_{n\dd},Y^{(\delta)}_{n\dd})^{p-k} \\
&\qquad\qquad\qquad\times\big(1+V_\ll(X^{(\delta)}_{n\dd},Y^{(\delta)}_{n\dd})^{\frac{1}{2}k}\big)\\
&\le \frac{1}{16}p\lambda_*\delta\e^{\ff14 {\ll_* p  n\dd} }(1+C_{ 3}\dd^3)^{p-1}V_\ll(X^{(\delta)}_{n\dd},Y^{(\delta)}_{n\dd})^{p} +C_{ 8}\e^{\ff14 {\ll_* p  n\dd} }(1+d^p)\delta .
\end{align*}
So we find that for some constant $C_9>0,$
\begin{align*}
	 \e^{\ff14 {\ll_* p \dd} } \E\big( V_\ll(X^{(\delta)}_{(n+1)\dd},Y^{(\delta)}_{(n+1)\dd})^p\big|\mathscr F_{n\delta}\big)
& \le   (1+C_{ 3}\dd^3)^{p-1}(1+C_{ 3}\dd^3+p\lambda_*\delta /8)  V_\ll(X^{(\delta)}_{n\dd},Y^{(\delta)}_{n\dd})^p  \\
 &\quad+C_{ 9} (1+d^p)\delta .
\end{align*}
Note that there exists $\delta_p^*\in(0,1]$ such  that for any $\delta\in(0,\delta_p^*]$,
$$(1+C_{ 3}\dd^3)^{p-1}(1+C_{ 3}\dd^3+p\lambda_*\delta /8)\le  1+3p\lambda_*\delta /{16}.$$
Consequently, we have that  for all $\delta\in(0,\delta_p^*]$,
\begin{equation*}
 \e^{\ff14 {\ll_* p \dd} } \E\big( V_\ll(X^{(\delta)}_{(n+1)\dd},Y^{(\delta)}_{(n+1)\dd})^p\big|\mathscr F_{n\delta}\big)
\le   (1+3p\lambda_*\delta /{16} ) V_\ll(X^{(\delta)}_{n\dd},Y^{(\delta)}_{n\dd})^p  +C_{ 9} (1+d^p)\delta .
\end{equation*}
With this estimate at hand, an iteration argument enables us to derive that for all $\delta\in(0,\delta_p^*]$,
\begin{equation}\label{U28}
\begin{split}
  \E\big( V_\ll(X^{(\delta)}_{ n \dd},Y^{(\delta)}_{  n \dd})^p\big|\mathscr F_{0}\big)
&\le  \e^{-\ff14 { \ll_* p n\dd} } (1+3p\lambda_*\delta /{16} )^n \Big(V_\ll(X^{(\delta)}_0,Y^{(\delta)}_0)^p  +\frac{16C_{ 9} (1+d^p)}{3p\lambda_*}\Big)\\
&\le  \e^{-\frac{1}{16} { \ll_* p n\dd} } \Big(V_\ll(X^{(\delta)}_0,Y^{(\delta)}_0)^p  +\frac{16C_{ 9} (1+d^p)}{3p\lambda_*}\Big),
\end{split}
\end{equation}
where in the second inequality we employ the basic inequality: $ 1+r\le \e^r, r\ge0.$  At length,
 the assertion \eqref{lle1} is achievable  by combining  \eqref{U28} with \eqref{U26}, and noting that $V_\lambda(x,y)$ is comparable with $1+|x|^2+|y|^2$.
\end{proof}

In the end of this paper, we proceed to finish the proof of Theorem \ref{prolem}.

\begin{proof}[Proof of Theorem \ref{prolem}]
Below, we stipulate $\delta\in(0,\delta^*_2]$ so   Lemma \ref{llem} is applicable, and  
for notational simplicity,  set for any $t\ge0 $ and $\vv>0,$
	\begin{align*}
R^{\dd,\vv}_t:=\big(V (X_t^{\dd,\vv},Y_t^{\dd,\vv})+ V (X^{(\dd,\vv)}_t,Y^{(\dd,\vv)}_t)\big) \big( | X^{(\dd,\vv)}_t- X^{(\dd,\vv)}_{t_\dd} |+ | Y^{(\dd,\vv)}_{t}- Y^{(\dd,\vv)}_{t_\dd } |\big),
\end{align*}
in which $V=V_\lambda$, defined in \eqref{F7}.

In terms of  Theorem \ref{thm2}, to obtain the assertion \eqref{U29},
it suffices to show that there exists a constant $C_0>0,$  such that
\begin{align}\label{U30}
\E  R^{\dd,\vv}_t \le C_0\big(1+\E|X_0^\nu|^3+\E|Y_0^\nu|^3+\E|X_0^{(\dd),\mu }|^3+\E|Y_0^{(\dd),\mu }|^3\big)( 1+d^{\ff32})\dd^{\ff12}.
\end{align}	
Indeed, by H\"older's inequality, it follows   that 	
\begin{align*}
\E (R^{\dd,\vv}_t|\mathscr F_0)&\le\Big( \big(\E \big(V (X_t^{(\dd,\vv)},Y_t^{(\dd,\vv)}\big)^2|\mathscr F_0)\big)^{\ff12}+\big(\E \big(V (X_t^{ \dd,\vv} ,Y_t^{\dd,\vv} )^2|\mathscr F_0\big)\big)^{\ff12}\Big)\\
&\qquad\times\Big(\big(\E\big(\big| X^{(\dd,\vv)}_t- X^{(\dd,\vv)}_{t_\dd}\big|^2\big|\mathscr F_0\big)\big)^{\ff12} +\big(\E\big(\big| Y^{(\dd,\vv)}_t- Y^{(\dd,\vv)}_{t_\dd}\big|^2\big|\mathscr F_0\big)\big)^{\ff12}\Big) .
\end{align*}
Notice from \eqref{U23} that there exists a constant $C_1>0$ such that
 $x,y\in\R^d,$
\begin{equation}\label{U34}
\begin{split}
(\mathscr L  V^2)(x,y)& =-  V(x,y) \big(  \<x,\nn U (x)\>+  |y|^2 + \ll \<x,y\>- d - V(x,y)^{-1}|\nn_2V_\lambda(x,y)|^2 \big)\\
& \le-\frac{1}{2} \lambda_*V (x,y)^2+C_1(1+d^2).
\end{split}
\end{equation}
This, together with the fact that $(X_t^{\dd,\vv},Y_t^{\dd,\vv})_{t\ge0}$ and $(X_t,Y_t)_{t\ge0}$ share  the same infinitesimal generator,  implies that 
there exists a constant $C_2>0$ such that
\begin{align}\label{F5}
	\E \big(V  (X_t^{\dd,\vv},Y_t^{\dd,\vv})^2\big|\mathscr F_0\big)\le C_2\big(1+  V (X_0^\nu,Y_0^\nu)^2 +d^{ 2}\big).
\end{align}
Furthermore, it follows from \eqref{EQ3} that 
\begin{equation}\label{F6}
\begin{split}
 |X^{(\dd,\vv)}_t- X^{(\dd,\vv)}_{t_\dd}|&\le|aX_{t_\delta}^{(\delta,\vv)}+bY_{t_\delta}^{(\delta,\vv)}|(t-t_\delta)\\
  |Y^{(\dd,\vv)}_t- Y^{(\dd,\vv)}_{t_\dd}|&\le|U(  X^{(\dd,\vv)}_{t_\dd}, Y^{(\dd,\vv)}_{t_
 \dd})|(t-t_\delta)+|\bar W^\delta_t|(t-t_\delta)^{\frac{1}{2}}, 
\end{split}
\end{equation}
where  $\bar W^\delta_t:=(\bar W_t-\bar W_{t_\delta})(t-t_\delta)^{-\frac{1}{2}}\in N({\bf 0}, I_d)$ with 
\begin{align*}
\bar W_t:=\int_0^th_\vv(|Q^{\dd,\vv}_s|)\Pi(Q^{\dd,\vv}_s) \d \hat W_s+\int_0^t h^*_\vv(|Q^{\dd,\vv}_s|)\,\d \tt W_s,\quad t\ge0. 
\end{align*}
In retrospect,  $( X^{(\dd,\vv)}_t,  Y^{(\dd,\vv)}_t))_{t\ge0}$  and $(X_t^{(\delta),\mu},Y_t^{(\delta),\mu})_{t\ge0}$ 
are distributed identically. Thus,  \eqref{F5} and \eqref{F6},  
in addition to Lemma \ref{llem}, \eqref{U26}, as well as the fact that $V(x,y)$ is comparable with $1+|x|^2+|y|^2$,
 give  that for some constant $C_3>0,$
\begin{equation}\label{F8}
\begin{split}
\E (R^{\dd,\vv}_t|\mathscr F_0)&\le\big( 1+ |X^{(\dd),\mu}_0|^2+|Y^{(\dd),\mu}_0|^2+|X^\nu_0|^2+|Y^\nu_0|^2 + d  \big)\\
&\quad\times\big( 1+ |X^{(\dd),\mu}_0| +|Y^{(\dd),\mu}_0| +|X^\nu_0| +|Y^\nu_0| + d^{\frac{1}{2}}  \big)\delta^{\frac{1}{2}}.
\end{split}
\end{equation}
 This subsequently implies \eqref{U30} by applying Young's inequality followed by taking expectations.

From \eqref{U34}, it is ready to see that  $\pi_\8\in \mathscr P_3(\R^{2d})$.
Whereafter, \eqref{U33} holds true by noting  $\mathbb W_1\le\mathcal W_{\rho_{ V }}$ and applying
 \eqref{U29} with $\nu=\pi_\8 $ therein.
\end{proof}





\end{document}